\documentclass[12pt,a4paper]{article}
\usepackage{oldgerm}
\usepackage{amssymb}
\usepackage{amsthm}
\newtheorem{thm}{Theorem}[section]
\newtheorem{prop}[thm]{Proposition}
\newtheorem{lem}[thm]{Lemma}

\newtheorem{rem}{Remark}[section]
\newcommand{\Hcapg}{H_{\rho}}
\newcommand{\Hcapgc}{H_{\rho}^\mathbb{C}}
\newcommand{\hcapg}{\mathbf{h}_{\rho}}
\newcommand{\hcapgc}{\mathbf{h}_{\rho}^\mathbb{C}}
\newcommand{\hK}{hyper-K\"ahler\ }

\newcommand{\ImH}{{\rm Im}\mathbb{H}}
\begin{document}
\title{A generalization of Taub-NUT deformations}
\author{Kota Hattori}
\date{}
\maketitle
{\begin{center}
{\it Graduate School of Mathematical Sciences, University of Tokyo\\
3-8-1 Komaba, Meguro, Tokyo 153-8914, Japan\\
khattori@ms.u-tokyo.ac.jp}
\end{center}}
\maketitle
{\abstract We introduce a generalization of Taub-NUT 
deformations for 
large families of \hK quotients including toric \hK manifolds and 
quiver varieties. 
It is well-known that Taub-NUT deformations are defined for 
toric \hK manifolds, and the similar deformations were introduced for 
ALE \hK manifolds of type $D_k$ by Dancer, 
using the complete \hK metric on the cotangent bundle of 
complexification of compact Lie group.
We generalize them and study the Taub-NUT deformations
for the Hilbert schemes of $k$ points on $\mathbb{C}^2$.}
\section{Introduction}
\subsection{Taub-NUT spaces}
A hyper-K\"ahler manifold is a Riemannian manifold $(M,g)$ equipped with 
orthogonal integrable complex structures $I_1,I_2,I_3$ 
with quaternionic relations
$I_1^2 = I_2^2 = I_3^2 = I_1I_2I_3 = -1$ so that 
each $(M, g, I_i)$ is K\"ahlerian.
Then $M$ admits three symplectic forms $\omega_i := g(I_i\cdot,\cdot)$,
each of which is Ricci-flat K\"ahler metric with respect to $I_i$.
Throughout of this article, we regard $(M,I_1)$ as a complex manifold 
with a Ricci-flat K\"ahler metric 
$\omega_1$ and a non-degenerate closed $(2,0)$-form 
$\omega_2 + \sqrt{-1}\omega_3$, so-called a holomorphic symplectic structure.

The Euclidean space $\mathbb{C}^2 = \mathbb{R}^4$ is the trivial example of 
complete hyper-K\"ahler manifold, whose Ricci-flat K\"ahler metric is Euclidean
and the holomorphic symplectic structure is 
given by $dz\wedge dw$, where $(z,w)\in \mathbb{C}^2$ is the 
standard holomorphic coordinate.

In \cite{H}, Hawking constructed a complete hyper-K\"ahler metric on 
$\mathbb{R}^4$ with cubic volume growth which is called a Taub-NUT space.
On the other hand, LeBrun \cite{L} showed that the Taub-NUT space and 
the Euclidean space $\mathbb{C}^2$
are isomorphic as holomorphic symplectic manifolds,
consequently biholomorphic.
It means that the complex manifold $\mathbb{C}^2$ admits at least 
$2$ complete Ricci-flat K\"ahler metrics which are not isometric.
Such a phenomenon should never occur on compact complex manifolds, due to 
the uniqueness of the Ricci-flat K\"ahler metrics in each K\"ahler class.
The similar relation also holds between multi Eguchi-Hanson spaces and 
multi Taub-NUT spaces.

A generalization to the higher dimensional case are obtained by 
Gibbons, Rychenkova, Goto \cite{GRG} and Bielawski \cite{B}.
They construct Taub-NUT like \hK metrics by deforming the toric \hK manifolds,
using tri-Hamiltonian torus actions and \hK quotient method.

In \cite{D}, Dancer has defined the analogy of Taub-NUT deformations 
for some of 
the ALE spaces of type $D_k$ using $U(2)$-actions.
His results are based on the existence of \hK metrics on $T^*G^{\mathbb{C}}$ 
for any compact Lie group $G$ constructed by Kronheimer \cite{K2}.
Another generalization to noncommutative case is considered in Section 5 
of \cite{DS2}. 
They considered {\it hyper-K\"ahler modifications} for 
\hK manifolds with a tri-Hamiltonian $H$-action, where $H$ is a compact Lie group 
which is possibly noncommutative.
Note that the case of \cite{DS2} does not contains the results in \cite{D}, 
since the ALE spaces of type $D_k$ have no nontrivial tri-Hamiltonian actions.

In this paper, we generalize Taub-NUT deformations for some kinds of 
\hK quotients, 
which enable us to treat the above three cases \cite{GRG,B}, \cite{D} 
and Section 5 of \cite{DS} uniformly.
As a consequence, we apply the Taub-NUT deformations for 
the Hilbert schemes of $k$-points on $\mathbb{C}^2$.

\subsection{Notation and a main result}\label{sec1.2}
Here, we describe the main result in this paper more precisely.
Let a compact connected Lie group $H$ act on a \hK manifold $(M,g,I_1,I_2,I_3)$
preserving the \hK structure and there exists a \hK moment map
$\hat{\mu} : M\to \ImH\otimes \mathbf{h}^*$ with respect to $H$-action,
where $\ImH \cong\mathbb{R}^3$ be the pure imaginary part of quaternion 
$\mathbb{H}$ and $\mathbf{h}^*$ is the dual space of the Lie algebra 
$\mathbf{h} = {\rm Lie} (H)$.
Moreover, suppose the $H$-action extends to holomorphic 
$H^{\mathbb{C}}$ action on $(M,I_1)$,
where $H^{\mathbb{C}}$ is the complexification of $H$.
Let $\rho:H\to G\times G$ is a homomorphism of Lie groups, where $G$ is compact
connected Lie group.
Then $H_{\rho}:=\rho^{-1}(\Delta_G) \subset H$ acts on $M$, where
$\Delta_G\subset G\times G$ is the diagonal subgroup,
and the inclusion $\iota:H_{\rho}\to H$ induces a \hK moment map
$\mu := \iota^*\circ \hat{\mu}$.
If we denote by $Z_H\subset \mathbf{h}^*$ the subspace of fixed points by 
coadjoint action of $H$ on $\mathbf{h}^*$, then we have the \hK quotient 
$\mu^{-1}(\iota^*\zeta)/H_{\rho}$ for each $\zeta \in \ImH \otimes Z_H$.
In this paper we define Taub-NUT deformations for 
$\mu^{-1}(\iota^*\zeta)/H_{\rho}$ by the following way.

Let $N_G=T^*G^{\mathbb{C}}$ be the \hK manifolds with a $G\times G$-action 
constructed by Kronheimer \cite{K2}, and
$\nu:N_G \to \ImH\otimes (\mathbf{g}\oplus \mathbf{g})^*$ be its \hK
moment map described by Dancer and Swann \cite{DS}.
Then $H$ acts on $M\times N_G$ by $\rho$, and for $(x,p)\in M\times N_G$, 
$\sigma(x,p):= \hat{\mu} (x) + 
\rho^*(\nu(p))$ becomes the \hK moment map,
accordingly we obtain a \hK quotient $\sigma^{-1}(\zeta)/H$ for each 
$\zeta \in \ImH\otimes Z_H$.
Now we have two \hK quotients $\mu^{-1}(\iota^*\zeta)/H_{\rho}$ and 
$\sigma^{-1}(\zeta)/H$.
If they are smooth, then there are Ricci-flat K\"ahler metrics 
$\omega^{\iota^*\zeta}_{1},\omega^{\zeta}_{1}$ and 
holomorphic symplectic structures 
$\omega^{\iota^*\zeta}_{2} + \sqrt{-1}\omega^{\iota^*\zeta}_{3}$, 
$\omega^{\zeta}_{2} + \sqrt{-1}\omega^{\zeta}_{3}$, respectively.
Next we extend $\rho$ to the holomorphic homomorphism $H^{\mathbb{C}}\to 
G^{\mathbb{C}}\times G^{\mathbb{C}}$ and obtain a holomorphic map
\begin{eqnarray}
\bar{\rho} : H^{\mathbb{C}} / \Hcapgc \to 
(G^{\mathbb{C}}\times G^{\mathbb{C}})/ \Delta_{G^{\mathbb{C}}}.\label{rho1}
\end{eqnarray}
Then the main result is described as follows.

\begin{thm}
Let $M = \mathbb{H}^N$, and
$H\subset Sp(N)$ acts on $M$ naturally. 
Assume $\bar{\rho}$ is surjective.
Then there exists an biholomorphism as complex analytic spaces
\begin{eqnarray}
\psi:\mu^{-1}(\iota^*\zeta)/H_{\rho} 
\to \sigma^{-1}(\zeta)/H\nonumber
\end{eqnarray}
for each $\zeta\in Z_H$.
Moreover, if $\Hcapg$ acts on $\mu^{-1}(\iota^*\zeta)$ freely, 
then $\mu^{-1}(\iota^*\zeta)/H_{\rho}$ and $\sigma^{-1}(\zeta)/H$ are 
smooth complete \hK manifolds and $\psi$ satisfies
\begin{eqnarray}
[\psi^*\omega^{\zeta}_{1}]_{DR}
= [\omega^{\iota^*\zeta}_{1}]_{DR}, \quad
\psi^*(\omega^{\zeta}_{2} + \sqrt{-1}\omega^{\zeta}_{3})
= \omega^{\iota^*\zeta}_{2} + \sqrt{-1}\omega^{\iota^*\zeta}_{3},
\nonumber
\end{eqnarray}
where $[\cdot]_{DR}$ is a de Rham cohomology class.
\label{1.1}
\end{thm}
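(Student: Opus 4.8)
The plan is to prove everything with respect to the complex structure $I_1$, where a \hK quotient becomes a holomorphic symplectic (affine GIT) quotient. Decompose the \hK moment maps into their real and holomorphic parts, writing $\hat\mu=(\mu_1,\hat\mu_{\mathbb{C}})$ with $\hat\mu_{\mathbb{C}}=\mu_2+\sqrt{-1}\mu_3$ the holomorphic moment map of the $H^{\mathbb{C}}$-action on $(M,I_1)$, and similarly $\mu$, $\nu=(\nu_1,\nu_{\mathbb{C}})$, $\sigma=(\sigma_1,\sigma_{\mathbb{C}})$, and $\zeta=(\zeta_1,\zeta_{\mathbb{C}})$. The Kempf--Ness type correspondence between \hK quotients and holomorphic symplectic quotients then identifies, as complex analytic spaces,
\[ \mu^{-1}(\iota^*\zeta)/\Hcapg \;\cong\; \mu_{\mathbb{C}}^{-1}(\iota^*\zeta_{\mathbb{C}})/\!/\Hcapgc, \qquad \sigma^{-1}(\zeta)/H \;\cong\; \sigma_{\mathbb{C}}^{-1}(\zeta_{\mathbb{C}})/\!/H^{\mathbb{C}}, \]
with semistability governed by the real levels $\iota^*\zeta_1$ and $\zeta_1$; under the freeness hypothesis these are geometric quotients carrying the reduced holomorphic symplectic forms. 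It therefore suffices to produce an $I_1$-biholomorphism of the two right-hand sides intertwining $\omega_2+\sqrt{-1}\omega_3$, and then to compare the two K\"ahler classes.

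For the construction I first record the complex geometry of $N_G=T^*G^{\mathbb{C}}$. In the left trivialization $N_G\cong G^{\mathbb{C}}\times\mathbf{g}^{\mathbb{C}}$, the Dancer--Swann moment map has the form $\nu_{\mathbb{C}}(g,Y)=(Y,-\mathrm{Ad}(g^{-1})Y)$, and $G^{\mathbb{C}}\times G^{\mathbb{C}}$ acts by $(a,b)\cdot(g,Y)=(agb^{-1},\mathrm{Ad}(b)Y)$, so $H^{\mathbb{C}}$ acts through $\rho=(\rho_1,\rho_2)$ by $h\cdot g=\rho_1(h)g\rho_2(h)^{-1}$. Surjectivity of $\bar\rho$ is equivalent to surjectivity of $\rho_{1*}-\rho_{2*}\colon\mathbf{h}^{\mathbb{C}}\to\mathbf{g}^{\mathbb{C}}$; hence $H^{\mathbb{C}}$ acts transitively on the $G^{\mathbb{C}}$-factor with stabilizer $\Hcapgc$ at $g=e$, and dually $\rho_1^*-\rho_2^*\colon\mathbf{g}^{\mathbb{C}*}\to\mathbf{h}^{\mathbb{C}*}$ is injective. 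Given a point of $\sigma_{\mathbb{C}}^{-1}(\zeta_{\mathbb{C}})$ I use $H^{\mathbb{C}}$ to move $g$ to $e$, which is possible uniquely up to $\Hcapgc$; on the slice $\{g=e\}$ the constraint becomes $\hat\mu_{\mathbb{C}}(x)+(\rho_1^*-\rho_2^*)Y=\zeta_{\mathbb{C}}$. Applying $\iota^*$ annihilates the second term, since $\mathrm{Im}(\rho_1^*-\rho_2^*)=\ker\iota^*$, and yields $\mu_{\mathbb{C}}(x)=\iota^*\zeta_{\mathbb{C}}$; conversely, injectivity of $\rho_1^*-\rho_2^*$ shows that the solution $Y=Y(x)$ is \emph{uniquely determined} by $x$. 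Thus $x\mapsto(x,e,Y(x))$ is a $\Hcapgc$-equivariant map from $\mu_{\mathbb{C}}^{-1}(\iota^*\zeta_{\mathbb{C}})$ into the slice, and I let $\psi$ be the induced map on quotients; its inverse is the ``move $g$ to $e$ and project to $M$'' map, so $\psi$ is a biholomorphism. Because the tautological holomorphic symplectic form of $T^*G^{\mathbb{C}}$ restricts to zero on the cotangent fibre $\{g=e\}$ (there $dg=0$), the $N_G$-factor contributes nothing on the slice, and naturality of holomorphic symplectic reduction gives $\psi^*(\omega_2^\zeta+\sqrt{-1}\omega_3^\zeta)=\omega_2^{\iota^*\zeta}+\sqrt{-1}\omega_3^{\iota^*\zeta}$ exactly.

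Under the freeness hypothesis both quotients are smooth and $\psi$ is a diffeomorphism of \hK manifolds. Completeness follows because $M=\mathbb{H}^N$ and $N_G$ (Kronheimer) are complete, hence so is $M\times N_G$, and a \hK quotient of a complete manifold by a compact group acting freely is complete: the level set is closed, hence complete, and the quotient is its image under a Riemannian submersion. It remains to compare the K\"ahler classes. Here $\psi^*\omega_1^\zeta$ and $\omega_1^{\iota^*\zeta}$ are genuinely different forms --- this is exactly the Taub-NUT deformation of the metric --- but they are cohomologous. Pulled back along the slice, the reduced form $\psi^*\omega_1^\zeta$ equals $\omega_1^{\iota^*\zeta}$ plus the pullback, under the holomorphic map $x\mapsto Y(x)$, of the K\"ahler form of $N_G$ restricted to the fibre $\{g=e\}\cong\mathbf{g}^{\mathbb{C}}$. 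That fibre is contractible (indeed Stein), so Kronheimer's $G\times G$-invariant K\"ahler form admits there a $\Hcapg$-invariant potential; the extra term is therefore $dd^c$ of a function descending to the quotient, hence exact, giving $[\psi^*\omega_1^\zeta]_{DR}=[\omega_1^{\iota^*\zeta}]_{DR}$.

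The main obstacle is the unqualified first assertion, the biholomorphism of possibly singular complex analytic spaces without the freeness hypothesis: there the gauge-fixing $g=e$ must be justified at the level of categorical (affine GIT) quotients rather than honest orbit spaces, so one must check that it induces an isomorphism of invariant rings and respects the closed-orbit correspondence for semistable points. The favourable structural fact is that $T^*G^{\mathbb{C}}$ is a $G^{\mathbb{C}}$-bitorsor, so the $H^{\mathbb{C}}$-action on its $G^{\mathbb{C}}$-factor is free; this is what makes the reduction clean, but carrying it through the GIT formalism is the technical heart. The second delicate point is the de Rham identity, where contractibility of the slice fibre together with the invariance of Kronheimer's potential are precisely what render the difference of K\"ahler forms exact.
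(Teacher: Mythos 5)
Your construction of $\hat{\psi}(x)=(x,1,\eta(x))$ via the bijectivity of $\rho^*$ on the annihilator of $\Delta_{\mathbf{g}}$, and the identification of holomorphic symplectic forms using the Lagrangian fibre $\{g=e\}$, coincide with the paper's argument (Lemma \ref{linearalg}, Proposition \ref{4.4}). But there are two genuine gaps. First, you defer as ``the technical heart'' exactly the step that carries the weight of the proof: showing that $x$ is (semi)stable for $H_{\rho}^{\mathbb{C}}$ at level $\iota^*\zeta_1$ if and only if $\hat{\psi}(x)$ is (semi)stable for $H^{\mathbb{C}}$ at level $\zeta_1$. This is not a formal consequence of the bitorsor structure of $T^*G^{\mathbb{C}}$: the paper proves it (Propositions \ref{psi} and \ref{converse}) by analysing the Kempf--Ness functionals $\Phi_{x,\iota^*\zeta_1}$ and $\Phi_{\hat{\psi}(x),\zeta_1}$ on $\Hcapg\backslash\Hcapgc$ and $H\backslash H^{\mathbb{C}}$, and it needs the variational formula for the K\"ahler potential $\mathcal{E}$ of $N_G$ (Proposition \ref{kahlerpotential}) to obtain the quadratic growth $\mathcal{E}\ge {\rm dist}_{G\backslash G^{\mathbb{C}}}(e^{2\sqrt{-1}t\hat{\xi}_0},e^{2\sqrt{-1}t\hat{\xi}_1})^2\ge N_2t^2$ along directions with $\hat{\xi}_0\neq\hat{\xi}_1$, plus, for the converse direction, the hypothesis $M=\mathbb{H}^N$ (to bound $\|\eta(x)\|^2$ by a function of the potential $\varphi(x)$) and a perturbation argument (Proposition \ref{perturb}), because the critical point one produces sits at the rescaled level $s\cdot\iota^*\zeta_1$ rather than at $\iota^*\zeta_1$. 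None of this appears in your proposal.

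Second, your argument for $[\psi^*\omega^{\zeta}_1]_{DR}=[\omega^{\iota^*\zeta}_1]_{DR}$ starts from the identity ``$\psi^*\omega^{\zeta}_1$ pulled back along the slice equals $\omega^{\iota^*\zeta}_1$ plus the pullback of $\omega_{G,1}|_{\{g=e\}}$ under $x\mapsto\eta(x)$,'' and this is false at the level of forms: the reduced K\"ahler form is the descent of the ambient form restricted to the \emph{real} level set $\sigma^{-1}(\zeta)$, whereas $\hat{\psi}(x)=(x,1,\eta(x))$ satisfies only the complex moment map equation and in general lies outside $\sigma^{-1}(\zeta)$; to reach the level set one must flow along $\exp(\sqrt{-1}\mathbf{h})$, and that flow is exactly where the correction to the form hides. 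One could try to repair this by tracking the change of the global K\"ahler potential along the Kempf--Ness flow, but that is again the hard part, not a consequence of contractibility of the fibre. The paper sidesteps the computation entirely: it runs the biholomorphism argument for every complex structure $I_y$, $y\in S^2$, obtaining a continuous family $\psi_y$ of diffeomorphisms; connectedness of $S^2$ forces $\psi_y^*$ to be independent of $y$ on $H^2$, and since each $\psi_y$ matches the $I_y$-holomorphic symplectic form $\omega_{y'}+\sqrt{-1}\omega_{y''}$ exactly, all three K\"ahler classes are matched, in particular $[\omega_1]$.
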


\subsection{Hilbert schemes of $k$ points on $\mathbb{C}^2$}
We apply the above theorem to $M = {\rm End}(\mathbb{C}^k)\otimes_{\mathbb{C}} 
\mathbb{H}$, $H=U(k) \times U(k)$, $G= U(k)$ and $\rho = {\rm id}$.
Then $Z_H = \mathbb{R}$, and $\mu^{-1}(\iota^*\zeta)/H_{\rho}$ becomes
a quiver varieties constructed in \cite{Na1}, called the Hilbert scheme of 
$k$-points of $\mathbb{C}^2$.
In particular, $\mu^{-1}(0)/H_{\rho}$ is isomorphic to 
$(\mathbb{C}^2)^k/\mathcal{S}_k$
with Euclidean metric as \hK orbifolds.
In this case, 
$\sigma^{-1}(\zeta)/H$ becomes a smooth \hK manifolds diffeomorphic to
$\mu^{-1}(\iota^*\zeta)/H_{\rho}$ by Theorem \ref{1.1},
and the \hK metric on $\sigma^{-1}(0)/H$ can be described concretely.

\begin{thm}
In the above situation, we have an isomorphism
\begin{eqnarray}
\sigma^{-1}(0)/H\cong (\mathbb{C}_{Taub-NUT}^2)^k/\mathcal{S}_k\nonumber
\end{eqnarray}
as \hK orbifolds,
where $\mathbb{C}_{Taub-NUT}^2$ is the Taub-NUT space.
\label{TNUTHilb}
\end{thm}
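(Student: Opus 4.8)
The plan is to combine the biholomorphism furnished by Theorem \ref{1.1} with an explicit computation of the metric, the latter carried out by reducing the nonabelian quotient to a maximal torus. Since $\rho = \mathrm{id}$, the induced map $\bar\rho$ is the identity and in particular surjective, so Theorem \ref{1.1} applies at $\zeta = 0$ and gives a biholomorphism $\psi:\mu^{-1}(0)/\Hcapg \to \sigma^{-1}(0)/H$ of complex analytic spaces intertwining the holomorphic symplectic forms. Here $\Hcapg = \Delta_{U(k)}$ and $\mu^{-1}(0)/\Hcapg$ is the \hK quotient of $\mathrm{End}(\mathbb{C}^k)\otimes\mathbb{H}$ by conjugation, namely the symmetric product $(\mathbb{C}^2)^k/\mathcal{S}_k$ with its flat holomorphic symplectic structure. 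By LeBrun's theorem the Taub-NUT space is isomorphic to $\mathbb{C}^2$ as a holomorphic symplectic manifold, whence $(\mathbb{C}^2_{Taub-NUT})^k/\mathcal{S}_k$ is isomorphic to $(\mathbb{C}^2)^k/\mathcal{S}_k$ as a holomorphic symplectic orbifold. Composing, one obtains an isomorphism $\sigma^{-1}(0)/H \cong (\mathbb{C}^2_{Taub-NUT})^k/\mathcal{S}_k$ of holomorphic symplectic orbifolds essentially for free. What remains, and what is the real content, is to show that this isomorphism also matches the K\"ahler forms $\omega_1$, i.e. the Riemannian metrics; this cannot follow from the holomorphic data alone, precisely because Taub-NUT and the flat metric carry the \emph{same} holomorphic symplectic structure.

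To pin down the metric I would first treat the abelian model. Let $T = U(1)^k \subset U(k)$ be the diagonal maximal torus and $N_T = T^*T^{\mathbb{C}} = (T^*U(1)^{\mathbb{C}})^k$, which carries Kronheimer's flat \hK metric, isometric to $(\mathbb{R}^3\times S^1)^k$. Restricting $M$ to its diagonal part $\mathbb{H}^k$ and $N_G$ to $N_T$, the $(T\times T)$-\hK quotient $(\mathbb{H}^k\times N_T)///(T\times T)$ factors as a product of $k$ copies of the rank-one quotient $(\mathbb{H}\times T^*U(1)^{\mathbb{C}})///(U(1)\times U(1))$. The latter is exactly the \hK quotient presentation of the Taub-NUT space of Gibbons--Rychenkova--Goto and Bielawski: one $U(1)$ factor acts trivially (fixing the level at $0$), while the effective $U(1)$ produces the Gibbons--Hawking ansatz with the linear term characteristic of Taub-NUT. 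Hence $(\mathbb{H}^k\times N_T)///(T\times T)\cong(\mathbb{C}^2_{Taub-NUT})^k$ as \hK manifolds, and the Weyl group $N(T)/T = \mathcal{S}_k$ acts by permuting the $k$ factors.

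The final and most delicate step is the abelianization: to show that $\sigma^{-1}(0)/H$ is, as a \hK orbifold, the $\mathcal{S}_k$-quotient of this abelian model. On the regular locus, where the eigenvalue pairs of the commuting pair $(B_1,B_2)$ underlying a point of $\mu^{-1}(0)/\Hcapg$ are distinct, I would use $H^{\mathbb{C}}$ to diagonalize simultaneously, reducing the structure group from $U(k)\times U(k)$ to $T\times T$ with residual Weyl symmetry $\mathcal{S}_k$. The key analytic point is that at such a point the root (off-diagonal) directions of $M\times N_G$ are spanned by, and orthogonal to the diagonal slice under, the off-diagonal gauge orbit directions, so that the \hK quotient metric restricts on the diagonal slice $\mathbb{H}^k\times N_T$ to the abelian quotient metric of the previous paragraph. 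This demands an explicit analysis of Kronheimer's metric on $T^*G^{\mathbb{C}}$ near the torus together with the real moment map of Dancer--Swann, and it is where I expect the main obstacle to lie, both in verifying the orthogonal decoupling of the off-diagonal directions and in recovering the correct mass parameter of each Taub-NUT factor. Granting this, one obtains a \hK isometry on a dense open subset; since $M$ and $N_G$ are complete, the \hK quotient is complete as a metric space (an orbifold), so the isometry extends over the collision strata by continuity, upgrading the holomorphic symplectic isomorphism of the first paragraph to an isomorphism of \hK orbifolds and completing the proof of Theorem \ref{TNUTHilb}.
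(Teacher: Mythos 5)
Your overall architecture (reduce to an abelian/torus model, identify each rank-one factor with Taub--NUT, let $\mathcal{S}_k$ act as the residual Weyl group, then globalize by completeness) is the right one, and your identification of the abelian building block $(\mathbb{H}\times T^*U(1)^{\mathbb{C}})$ modulo the effective circle with $\mathbb{C}^2_{Taub-NUT}$ matches the paper. But the step you yourself flag as ``the real content'' --- that on the locus of distinct eigenvalues the nonabelian quotient metric restricts on the diagonal slice to the abelian quotient metric, via an orthogonal decoupling of the off-diagonal directions of $M\times N_G$ from the diagonal slice inside the level set --- is left entirely unproved, and it is not a routine verification: it would require controlling Kronheimer's metric on $T^*G^{\mathbb{C}}$ in the root directions, and a naive dimension count shows the off-diagonal directions are not simply spanned by the complexified gauge orbit, so the decoupling would have to be extracted from the level-set equations. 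As written, the proposal establishes only the holomorphic symplectic isomorphism (which, as you note, is metrically vacuous) plus an unverified claim.

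The paper circumvents this computation completely. It takes the diagonal slice $\hat{M}_0 = M_0\times N_{T^k}\subset M\times N_{U(k)}$ and the subgroup $H_0\cong\mathcal{S}_k\ltimes T^k$ (note: the connected part is a \emph{single} copy $T^k\times\{1\}$, not $T^k\times T^k$; since the diagonal torus acts trivially on the slice this is the correct effective group, and it is what makes the dimensions come out to $4k$). A direct computation of $\hat{\mu}$ on diagonal matrices shows that the zero level set of the full moment map $\sigma$ restricted to $\hat{M}_0$ coincides with the zero level set of the abelian moment map $\sigma_0$; this immediately produces a hyper-K\"ahler morphism $\sigma_0^{-1}(0)/H_0\to\sigma^{-1}(0)/H$ from the complete orbifold $(\mathbb{C}^2_{Taub-NUT})^k/\mathcal{S}_k$, with no analysis of normal directions. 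Injectivity is then an elementary eigenvalue-matching argument, and surjectivity follows because Theorem \ref{1.1} identifies $\sigma^{-1}(0)/H$ topologically with the connected space $(\mathbb{C}^2)^k/\mathcal{S}_k$, so a complete $4k$-dimensional hyper-K\"ahler suborbifold must be the whole thing. If you want to salvage your route, you should replace the ``orthogonal decoupling on the regular locus'' step by this level-set coincidence plus the connectedness/completeness argument; otherwise you must actually carry out the metric computation you have deferred.
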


\subsection{Outline of the proof}
Theorem \ref{1.1} is proven in the following way.
The hyper-K\"ahler moment map $\mu$ is decomposed into
$\mu = (\mu_{1}, \mu_{\mathbb{C}}:= \mu_{2} + \sqrt{-1} \mu_{3})$
along the decomposition ${\rm Im}\mathbb{H} = \mathbb{R} \oplus \mathbb{C}$,
where $\mu_{1}:M\to \hcapg^*$ and $\mu_{\mathbb{C}}: M \to (\hcapgc)^*$,
and the other hyper-K\"ahler moment maps and the parameter
$\zeta \in {\rm Im}\mathbb{H}\otimes Z_{H}$ are also decomposed 
in the same manner.
Define sets of ``stable points" by
\begin{eqnarray}
\mu_{\mathbb{C}}^{-1}(\iota^*\zeta_{\mathbb{C}})_{\iota^*\zeta_1}
&:=& \Hcapgc\cdot \mu^{-1}(\iota^*\zeta)\nonumber\\
&=& \{g\cdot x \in \mu_{\mathbb{C}}^{-1}(\iota^*\zeta_{\mathbb{C}});\ 
g\in \Hcapgc,\ x\in \mu^{-1}(\iota^*\zeta)\}
,\nonumber\\
\sigma_\mathbb{C}^{-1}(\zeta_{\mathbb{C}})_{\zeta_1}
&:=& H^{\mathbb{C}} \cdot \sigma^{-1}(\zeta).\nonumber
\end{eqnarray}
The natural embedding 
\begin{eqnarray}
\mu^{-1}(\iota^*\zeta)
\hookrightarrow \mu_{\mathbb{C}}^{-1}(\iota^*
\zeta_{\mathbb{C}})_{\iota^*\zeta_1},
\quad 
\sigma^{-1}(\zeta)\hookrightarrow 
\sigma_\mathbb{C}^{-1}(\zeta_{\mathbb{C}})_{\zeta_1}\nonumber
\end{eqnarray}
induce 
\begin{eqnarray}
\mu^{-1}(\iota^*\zeta)/\Hcapg \to 
\mu_{\mathbb{C}}^{-1}(\iota^*\zeta_{\mathbb{C}})_{\iota^*\zeta_1} / \Hcapgc,
\quad 
\sigma^{-1}(\zeta)/H \to
\sigma_\mathbb{C}^{-1}(\zeta_{\mathbb{C}})_{\zeta_1} / H^{\mathbb{C}},\nonumber
\end{eqnarray}
which are isomorphisms of complex analytic spaces 
by \cite{HL}.
Here, to regard 
$\mu_{\mathbb{C}}^{-1}(\iota^*\zeta_{\mathbb{C}})_{\iota^*\zeta_1} / \Hcapgc$ 
and
$\sigma_\mathbb{C}^{-1}(\zeta_{\mathbb{C}})_{\zeta_1} / H^{\mathbb{C}}$ 
as complex analytic spaces, 
we consider the sets of ``semistable points" 
$\mu_{\mathbb{C}}^{-1}(\iota^* \zeta_{\mathbb{C}})_{\iota^*\zeta_1}^{ss}$
and $\sigma_\mathbb{C}^{-1}(\zeta_{\mathbb{C}})_{\zeta_1}^{ss}$ 
in Section \ref{pf.of1.1}.

Thus the proof of Theorem \ref{1.1} is reduced to 
construct an isomorphism between
$\mu_{\mathbb{C}}^{-1}
(\iota^*\zeta_{\mathbb{C}})_{\iota^*\zeta_1} / \Hcapgc$ and
$\sigma_\mathbb{C}^{-1}(\zeta_{\mathbb{C}})_{\zeta_1} 
/ H^{\mathbb{C}}$.
First of all, we define an $\Hcapgc$ equivariant 
holomorphic map 
$\hat{\psi}: \mu_{\mathbb{C}}^{-1} (\iota^*\zeta_{\mathbb{C}}) \to 
\sigma_\mathbb{C}^{-1}(\zeta_{\mathbb{C}})$
so that a induced map $\psi: \mu_{\mathbb{C}}^{-1}
(\iota^*\zeta_{\mathbb{C}}) / \Hcapgc \to
\sigma_\mathbb{C}^{-1}(\zeta_{\mathbb{C}}) 
/ H^{\mathbb{C}}$ is bijective.

Then it suffices to see that $\psi$ gives a one-to-one correspondence 
between $\mu_{\mathbb{C}}^{-1}(\iota^*\zeta_{\mathbb{C}})_{\iota^*\zeta_1} 
/ \Hcapgc$ 
and $\sigma_\mathbb{C}^{-1}(\zeta_{\mathbb{C}})_{\zeta_1} /H^{\mathbb{C}}$.
To show it, we describe some equivalent conditions for 
$x$ and $\hat{\psi}(x)$ to be $x\in \mu_{\mathbb{C}}^{-1}
(\iota^*\zeta_{\mathbb{C}})_{\iota^*\zeta_1}$ and
$\hat{\psi}(x) \in \sigma_\mathbb{C}^{-1}(\zeta_{\mathbb{C}})_{\zeta_1}$
in Section \ref{hamilton}, 
using some convex functions on $G\backslash G^{\mathbb{C}}$
We also need the description of the K\"ahler potential of $N_G$, 
which is discussed in Section \ref{T*G^c}.

This paper is organized as follows.
We review the construction of \hK structures on $N_G = T^*G^{\mathbb{C}}$
along \cite{K2}, and describe \hK moment map by \cite{D} in Section \ref{T*G^c}.
Moreover we describe the K\"ahler potentials of the \hK metrics 
using the method of \cite{HKLR} and \cite{Do}.

In Section \ref{hamilton}, to obtain other description of $\mu_{\mathbb{C}}^{-1}
(\iota^*\zeta_{\mathbb{C}})_{\iota^*\zeta_1}$ and 
$\sigma_\mathbb{C}^{-1}(\zeta_{\mathbb{C}})_{\zeta_1}$, 
we study the relation between a K\"ahler moment map and 
some geodesically convex functions on Riemannian symmetric spaces.

In Section \ref{mainresult}, we prove Theorem \ref{1.1}, 
by using the description of 
K\"ahler potentials obtained in Section \ref{T*G^c} and the methods in 
Section 3.

In Section 5 we apply Theorem \ref{1.1} to the Hilbert schemes of 
$k$ points on $\mathbb{C}^2$ and show Theorem \ref{TNUTHilb}.
Moreover, we see that Theorem \ref{1.1} can be applied to 
quiver varieties and toric \hK varieties.

\section{Hyper-K\"ahler structures on $T^*G^{\mathbb{C}}$}\label{T*G^c}
\subsection{Riemannian description}
Here we review briefly the hyper-K\"ahler quotient construction of $N_G$
along \cite{K2}, and describe hyper-K\"ahler moment map $\nu$ along 
\cite{D}\cite{DS}.

Let $G$ be a compact connected Lie group, 
and $\|\cdot \|$ is a norm on $\mathbf{g}$ induced by an ${\rm Ad}_G$-invariant
inner product.
Consider the following equations
\begin{eqnarray}
\frac{dT_i}{ds} + [T_0,T_i] + [T_j,T_k] = 0 \quad {\rm for\ }(i,j,k) = 
(1,2,3),(2,3,1),(3,1,2),\label{nahm}
\end{eqnarray}
for $T:=(T_0, T_1, T_2, T_3)\in C^1([0,1],\mathbf{g})\otimes \mathbb{H}$.
Put
\begin{eqnarray}
\mathcal{N}_G:= \{T\in C^1([0,1],\mathbf{g})\otimes \mathbb{H};\ 
 {\rm equations\ (\ref{nahm}) \ holds }\}.\nonumber
\end{eqnarray}
Then a gauge group $\mathcal{G} := C^2([0,1],G)$ 
acts on $\mathcal{N}_G$ by
\begin{eqnarray}
g\cdot T:=({\rm Ad}_gT_0 + g\frac{d}{ds}g^{-1},{\rm Ad}_gT_1, 
{\rm Ad}_gT_2, {\rm Ad}_gT_3),\nonumber
\end{eqnarray}
and we obtain
\begin{eqnarray}
N_G := \mathcal{N}_G/\mathcal{G}_0,\nonumber
\end{eqnarray}
where $\mathcal{G}_0 := \{g\in \mathcal{G};\ g(0) = g(1) = 1\}$.
It is shown in \cite{K2} that $N_G$ becomes a $C^{\infty}$ manifold 
of dimesion $4\dim G$, and the standard hyper-K\"ahler structure on 
$C^1([0,1],\mathbf{g})\otimes \mathbb{H}$ induces a
hyper-K\"ahler structure $g_G,I_{G,1},I_{G,2},I_{G,3}$ on $N_G$.
Here, $g_G$ is induced from the $L^2$-inner product on
$C^1([0,1],\mathbf{g})\otimes \mathbb{H}$ using 
${\rm Ad}_G$-invariant inner product on $\mathbf{g}$.

Now we have a Lie group isomorphism 
$\mathcal{G}/\mathcal{G}_0 = G\times G$
defined by $g\mapsto (g(0),g(1))$.
Since $\mathcal{G}$ acts on $\mathcal{N}_G$, there exists a
$G\times G$-action on $N_G$ preserving the hyper-K\"ahler structure.

\begin{thm}[\cite{D}\cite{DS}]
The hyper-K\"ahler moment map 
$\nu = (\nu^0,\nu^1): N_G \to 
{\rm Im}\mathbb{H}\otimes (\mathbf{g}^*\oplus\mathbf{g}^*)$
with respect to the action of $G\times G$ on $N_G$
is given by
\begin{eqnarray}
\nu^0 ([T]) = (T_1(0),T_2(0),T_3(0)),\quad 
\nu^1 ([T]) = - (T_1(1),T_2(1),T_3(1)),\nonumber
\end{eqnarray}
under the identification $\mathbf{g}^* \cong \mathbf{g}$
using ${\rm Ad}_G$-invariant inner product.
Here we denote by $[T]\in N_G$ the equivalence class 
represented by $T\in \mathcal{N}_G$.
\label{Dancer}
\end{thm}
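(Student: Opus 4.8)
The plan is to realize $N_G$ as an infinite-dimensional hyper-K\"ahler quotient of the flat space $V := C^1([0,1],\mathbf{g})\otimes\mathbb{H}$ by the based gauge group $\mathcal{G}_0$, and to read off the residual moment map for $G\times G = \mathcal{G}/\mathcal{G}_0$ from the boundary terms produced by an integration by parts. Concretely, I would first make the flat hyper-K\"ahler structure on $V$ explicit: the three complex structures are right multiplication by the quaternion units on the $\mathbb{H}$-factor, and the metric is the $L^2$ inner product built from the $\mathrm{Ad}_G$-invariant form on $\mathbf{g}$, so that each $\omega_a$ is the corresponding constant $L^2$ symplectic pairing. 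Introducing the complex coordinates $\alpha = T_0 + \sqrt{-1}\,T_1$ and $\beta = T_2 + \sqrt{-1}\,T_3$ adapted to $I_{G,1}$, the holomorphic symplectic form becomes $\omega_{\mathbb{C}}((\delta\alpha,\delta\beta),(\delta\alpha',\delta\beta')) = \int_0^1(\langle\delta\alpha,\delta\beta'\rangle - \langle\delta\alpha',\delta\beta\rangle)\,ds$.

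Next I would compute the moment map for the full gauge group $\mathcal{G}$ acting on $V$ (an action by affine isometries, inhomogeneous only in the $T_0$-slot). For $\xi\in\mathrm{Lie}(\mathcal{G})=C^2([0,1],\mathbf{g})$ the generated vector field has components $\delta\alpha_\xi = -\frac{d\xi}{ds}+[\xi,\alpha]$ and $\delta\beta_\xi = [\xi,\beta]$, and contracting with $\omega_{\mathbb{C}}$, integrating the $\frac{d\xi}{ds}$ term by parts and using $\mathrm{Ad}$-invariance gives
\[
\iota_{X_\xi}\omega_{\mathbb{C}} = d\Big(\int_0^1 \big\langle \xi,\ \tfrac{d\beta}{ds}+[\alpha,\beta]\big\rangle\,ds + \langle\xi(0),\beta(0)\rangle - \langle\xi(1),\beta(1)\rangle\Big).
\]
The bulk integrand is exactly the $(2,3)$-part of the Nahm operator in (\ref{nahm}), so the first summand is the moment map for $\mathcal{G}_0$ whose zero set is $\mathcal{N}_G$, while the endpoint terms form the pullback along $\xi\mapsto(\xi(0),\xi(1))$ of a function on $\mathbf{g}\oplus\mathbf{g}$. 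Restricting to $[T]\in N_G$, i.e. to $T\in\mathcal{N}_G$, the integral vanishes, and I would invoke the standard reduction principle: since $G\times G=\mathcal{G}/\mathcal{G}_0$ and the $\mathcal{G}$-moment map annihilates $\mathrm{Lie}(\mathcal{G}_0)$ on $\mathcal{N}_G$, it descends to the residual moment map, whose value on $(a,b)\in\mathbf{g}\oplus\mathbf{g}$ is computed by evaluating the boundary terms on any extension $\xi$ with $\xi(0)=a,\ \xi(1)=b$. This yields $\nu^0_{\mathbb{C}}([T]) = \beta(0) = (T_2+\sqrt{-1}\,T_3)(0)$ and $\nu^1_{\mathbb{C}}([T]) = -\beta(1)$, which is precisely the $(2,3)$-component of the asserted formula; the $\omega_1$-component is treated identically, the bulk assembling into the remaining Nahm equation and the boundary giving $T_1(0)$ and $-T_1(1)$.

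Finally I would verify that the formula is well defined on the quotient: the ambiguity in the extension $\xi$ lies in $\mathrm{Lie}(\mathcal{G}_0)$, which pairs to zero against the boundary terms on $\mathcal{N}_G$, and both $\mathcal{G}_0$-invariance and $G\times G$-equivariance follow from the equivariance of the $\mathcal{G}$-moment map. The main obstacle I anticipate is analytic and bookkeeping rather than conceptual: one must justify the infinite-dimensional moment-map formalism on $V$ (convergence of the $L^2$ pairings, and smoothness of $N_G$ together with that of the $G\times G$-action, for which I would rely on Kronheimer's construction \cite{K2}), and one must track the signs and the inhomogeneous transformation of $T_0$ carefully so that the bulk terms genuinely reconstitute the Nahm operator of (\ref{nahm}) and the boundary contributions emerge with the sign $+$ at $s=0$ and $-$ at $s=1$.
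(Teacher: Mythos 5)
Your derivation is correct, and since the paper states Theorem \ref{Dancer} as a cited result of \cite{D} and \cite{DS} without proof, the only comparison available is with those references, whose argument yours reproduces: the $G\times G$ moment map arises as the boundary terms of the full gauge-group moment map on the flat space $C^1([0,1],\mathbf{g})\otimes\mathbb{H}$, the bulk term being the Nahm operator that vanishes on $\mathcal{N}_G$. The signs and the well-definedness on $\mathcal{N}_G/\mathcal{G}_0$ check out as you describe.
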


\subsection{ Holomorphic description}\label{3.2}
In this subsection, we review that $(N_G, I_{G,1})$ is identified
with a holomorphic cotangent bundle 
$T^*G^{\mathbb{C}} \cong G^{\mathbb{C}} \times \mathbf{g}^{\mathbb{C}}$
as complex manifolds along \cite{Do}\cite{K2}.

For each $T\in \mathcal{N}_G$,
there exists a solution $u:[0,1]\to G^{\mathbb{C}}$
 for an ordinary differential equation
\begin{eqnarray}
\frac{du}{ds}u^{-1} = -(T_0 + \sqrt{-1}T_1),\nonumber
\end{eqnarray}
then $T_2 + \sqrt{-1}T_3 = {\rm Ad}_{u(s)u(0)^{-1}}\eta$ for some
$\eta\in \mathbf{g}^{\mathbb{C}}$ \cite{K2}.
Then a holomorphic map 
$\Phi : (N_G,I_{G,1}) \to G^{\mathbb{C}} \times \mathbf{g}^{\mathbb{C}}$
is obtained by $[T]\mapsto (u(0)u(1)^{-1},\eta)$.
\begin{thm}[\cite{Do}\cite{K2}]
The map $\Phi$ is biholomorphic and preserves holomorphic symplectic 
structures.
\label{K2Do}
\end{thm}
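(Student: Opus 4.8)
The plan is to work in the complex structure $I_{G,1}$ and pass to the complex combinations $\alpha:=T_0+\sqrt{-1}T_1$ and $\beta:=T_2+\sqrt{-1}T_3$. With respect to the flat quaternionic structure on $C^1([0,1],\mathbf{g})\otimes\mathbb{H}$ the pair $(\alpha,\beta)$ furnishes holomorphic coordinates for $I_{G,1}$, and the Nahm equations (\ref{nahm}) decouple accordingly: the two equations indexed by $(i,j,k)=(2,3,1),(3,1,2)$ combine into the single holomorphic \emph{complex Nahm equation}
\begin{eqnarray}
\frac{d\beta}{ds}+[\alpha,\beta]=0,\nonumber
\end{eqnarray}
while the equation indexed by $(1,2,3)$ is the real moment-map equation. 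Since $\Phi$ is built only from $\alpha$ (through $u$) and $\eta=\beta(0)$, establishing this splitting is what turns $\Phi$ into a holomorphic object.

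First I would check that $\Phi$ is well defined on $N_G$ and holomorphic. The equation $\frac{du}{ds}u^{-1}=-\alpha$ determines $u$ up to right translation $u\mapsto uc$ by a constant $c\in G^{\mathbb{C}}$, and both $u(0)u(1)^{-1}$ and $\eta=\beta(0)$ are invariant under this ambiguity. For $g\in\mathcal{G}_0$ the path attached to $g\cdot T$ is $\tilde u=gu$, so $\tilde u(0)\tilde u(1)^{-1}=g(0)\,u(0)u(1)^{-1}\,g(1)^{-1}=u(0)u(1)^{-1}$ and $\tilde\eta=\mathrm{Ad}_{g(0)}\eta=\eta$ because $g(0)=g(1)=1$; hence $\Phi$ descends to $N_G$. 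Holomorphy then follows from the holomorphic dependence of the solution of the linear $u$-equation on its coefficient $\alpha$, together with the linearity of $\eta=\beta(0)$.

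Next I would prove bijectivity by introducing the complexified gauge group $\mathcal{G}_0^{\mathbb{C}}=\{g\in C^2([0,1],G^{\mathbb{C}}):g(0)=g(1)=1\}$ and the moduli of solutions of the complex Nahm equation modulo $\mathcal{G}_0^{\mathbb{C}}$. The key intermediate claim is that $(\alpha,\beta)\mapsto(u(0)u(1)^{-1},\beta(0))$ identifies this complex moduli space with $G^{\mathbb{C}}\times\mathbf{g}^{\mathbb{C}}$: given $(Q,\eta)$ one obtains surjectivity by choosing any path with $u(0)=1$, $u(1)=Q^{-1}$ and putting $\alpha=-\frac{du}{ds}u^{-1}$, $\beta=\mathrm{Ad}_u\eta$, and one obtains injectivity by normalising $u(0)=u'(0)=1$ for two solutions with equal image and observing that $g:=u'u^{-1}$ lies in $\mathcal{G}_0^{\mathbb{C}}$ and sends $(\alpha,\beta)$ to $(\alpha',\beta')$. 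Combined with the Kempf--Ness type theorem of Kronheimer \cite{K2} --- that each $\mathcal{G}_0^{\mathbb{C}}$-orbit of solutions of the complex Nahm equation meets the zero locus $\mathcal{N}_G$ of the full moment map in exactly one $\mathcal{G}_0$-orbit --- this shows that the natural map from $N_G$ to the $\mathcal{G}_0^{\mathbb{C}}$-quotient is a bijection, so that $\Phi$ is bijective and, being holomorphic with holomorphic inverse, biholomorphic.

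Finally, for the holomorphic symplectic claim I would note that the flat form $\omega_{G,2}+\sqrt{-1}\omega_{G,3}$ on $C^1([0,1],\mathbf{g})\otimes\mathbb{H}$ equals, up to normalisation, $\int_0^1\langle\delta\alpha\wedge\delta\beta\rangle\,ds$ for the complex-bilinear extension of the invariant inner product, and that after the gauge fixing $u(0)=1$ this $L^2$-form reduces to the canonical holomorphic symplectic form $d\langle\eta,Q^{-1}dQ\rangle$ of $T^*G^{\mathbb{C}}\cong G^{\mathbb{C}}\times\mathbf{g}^{\mathbb{C}}$ in the left trivialisation, with $Q=u(0)u(1)^{-1}$; this is the direct computation carried out in \cite{Do}. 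The only genuinely hard, nonlinear and infinite-dimensional input is the Kempf--Ness step, which I would cite from \cite{K2} rather than reprove; the remainder is bookkeeping with the complex gauge action and with the linear equation defining $u$.
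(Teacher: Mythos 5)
The paper does not prove this statement: it is quoted verbatim from \cite{Do} and \cite{K2}, so there is no internal proof to compare against. Your outline correctly reproduces the standard argument of those references --- splitting the Nahm equations into the complex equation $\beta'+[\alpha,\beta]=0$ plus the real moment-map equation, checking $\mathcal{G}_0$-invariance of $(u(0)u(1)^{-1},\beta(0))$, identifying the complex gauge quotient with $G^{\mathbb{C}}\times\mathbf{g}^{\mathbb{C}}$, and citing Kronheimer's existence-and-uniqueness result for the real equation within each $\mathcal{G}_0^{\mathbb{C}}$-orbit as the one genuinely analytic input --- and the symplectic-form reduction to $d\langle\eta,Q^{-1}dQ\rangle$ is exactly Donaldson's computation, so the proposal is correct and consistent with the cited sources.
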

The moment map $\nu$ is decomposed into 
$\nu = (\nu_{\mathbb{R}}= \nu_1,\nu_{\mathbb{C}} = \nu_2 +\sqrt{-1}\nu_3)$
along the decomposition ${\rm Im}\mathbb{H}\cong \mathbb{R}\oplus \mathbb{C}$.
Then 
\begin{eqnarray}
\nu_{\mathbb{C}}(Q,\eta) = (\eta, - {\rm Ad}_{Q^{-1}} (\eta))\nonumber
\end{eqnarray}
under the identification $N_G = 
G^{\mathbb{C}} \times \mathbf{g}^{\mathbb{C}}$.
$\nu_{\mathbb{C}}$ is a holomorphic moment map
with respect to $G^{\mathbb{C}}\times G^{\mathbb{C}}$ action on 
$T^*G^{\mathbb{C}}$.
This action is given by
\begin{eqnarray}
(g_0,g_1)(Q,\eta) = (g_0 Q g_1^{-1},{\rm Ad}_{g_0}\eta) \label{groupaction}
\end{eqnarray}
for $(g_0,g_1)\in G^{\mathbb{C}}\times G^{\mathbb{C}}$ and 
$(Q,\eta) \in G^{\mathbb{C}} \times \mathbf{g}^{\mathbb{C}}$.

\subsection{K\"ahler potentials}
Next we describe the K\"ahler potential of 
the K\"ahler manifold $(N_G, g_{G}, I_{G,1})$.
We apply the following results for $N_G$.
\begin{lem}[\cite{HKLR}]
Let $(M,g,I_1,I_2,I_3)$ be a hyper-K\"ahler manifold with
isometric $S^1$ action generated by a Killing field $X$, 
which satisfies
\begin{eqnarray}
L_X\omega_1 = \omega_2,\quad L_X\omega_2 = - \omega_1,
\quad L_X\omega_3 = 0,\nonumber
\end{eqnarray}
where $\omega_i$ is the K\"ahler form of $(M,g,I_i)$.
If $\mu$ is the moment map with respect to the
$S^1$-action on $(M,\omega_3)$, then 
$\omega_1 = 2\sqrt{-1}\partial_1\overline{\partial}_1\mu$,
$\omega_2 = 2\sqrt{-1}\partial_2\overline{\partial}_2\mu$.
\label{potential}
\end{lem}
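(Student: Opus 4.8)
The plan is to reduce the statement to Cartan's magic formula together with the pointwise linear algebra of the quaternionic structure; there is no analytic content, since $\mu$ is a globally defined smooth function. First I would note that all three K\"ahler forms are closed, so Cartan's formula $L_X = d\iota_X + \iota_X d$ converts the three hypotheses into
\[
d\iota_X\omega_1 = \omega_2, \qquad d\iota_X\omega_2 = -\omega_1, \qquad d\iota_X\omega_3 = 0.
\]
The last equation is consistent with the moment map relation, which I take in the form $d\mu = \iota_X\omega_3$ (the sign convention forced by the statement). Writing every contraction metrically as $\iota_X\omega_a = g(I_a X, \cdot)$, the moment map enters only through $d\mu = g(I_3 X, \cdot)$.

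Second, I would record the standard local K\"ahler identity: for a real function $f$ and any complex structure $I_a$, one has on functions
\[
2\sqrt{-1}\,\partial_a\overline{\partial}_a f = -\,d\bigl(df\circ I_a\bigr),
\]
where $(df\circ I_a)(Y) := df(I_a Y)$; this is checked in one $I_a$-holomorphic chart. Hence proving $\omega_1 = 2\sqrt{-1}\,\partial_1\overline{\partial}_1\mu$ amounts to proving $\omega_1 = -d(d\mu\circ I_1)$, and likewise for $\omega_2$.

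The core is then a one-line pointwise computation. Using that each $I_a$ is skew-adjoint for $g$ and the quaternion relation $I_3 I_1 = I_2$, one gets $(d\mu\circ I_1)(Y) = g(I_3 X, I_1 Y) = -g(X, I_2 Y) = g(I_2 X, Y) = (\iota_X\omega_2)(Y)$, so $d\mu\circ I_1 = \iota_X\omega_2$. Combining with the first paragraph,
\[
2\sqrt{-1}\,\partial_1\overline{\partial}_1\mu = -\,d\,\iota_X\omega_2 = -L_X\omega_2 = \omega_1.
\]
The same computation with $I_3 I_2 = -I_1$ gives $d\mu\circ I_2 = -\iota_X\omega_1$, whence $2\sqrt{-1}\,\partial_2\overline{\partial}_2\mu = d\,\iota_X\omega_1 = L_X\omega_1 = \omega_2$, which is the second assertion.

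I expect the only real friction to be the bookkeeping of signs and conventions, of which there are exactly three: the sign in the moment map relation $d\mu = \iota_X\omega_3$, the sign and factor in the K\"ahler identity $2\sqrt{-1}\,\partial_a\overline{\partial}_a f = -d(df\circ I_a)$ (equivalently, the chosen definition of $d^c_a$), and the orientation of quaternion multiplication fixing $I_3 I_1 = I_2$ and $I_3 I_2 = -I_1$. Once these are pinned down consistently, each of the two assertions follows by a single application of Cartan's formula, and the argument is global because $\mu$ and all the tensors involved are globally defined.
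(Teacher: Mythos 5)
The paper does not actually prove this lemma --- it is quoted from \cite{HKLR} and used as a black box --- so there is no in-paper argument to compare against. Your derivation is correct and self-contained: the chain $2\sqrt{-1}\,\partial_1\overline{\partial}_1\mu = -d(d\mu\circ I_1) = -d\iota_X\omega_2 = -L_X\omega_2 = \omega_1$ (and its $I_2$ analogue) checks out, with the key pointwise step $d\mu\circ I_1 = \iota_X\omega_2$ following from $d\mu = g(I_3X,\cdot)$, skew-adjointness of the $I_a$, and $I_3I_1 = I_2$, $I_3I_2 = -I_1$. You are right that the only delicate points are the three sign conventions you list (moment map sign, the $dd^c$ normalization, and the quaternion orientation), and these are exactly the conventions the lemma's statement implicitly fixes; with the choices you made the signs close up consistently, as one can verify on flat $\mathbb{H}$ with $X$ the rotation in the $(T_1,T_2)$-plane.
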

We apply this lemma as follows.
Let $\omega_{G,i}:= g_G(I_{G,i}\cdot,\cdot)$, and define 
$e^{i\theta} \cdot [T]:=[T_0,\cos\theta\ T_1 + \sin\theta\ T_2, 
- \sin\theta\ T_1 + \cos\theta\ T_2, T_3]$
for $[T]\in N_G$, which is an $S^1$-action on $N_G$
preserving $\omega_{G,3}$ and satisfies the assumption of 
Lemma (\ref{potential}). 
Then the moment map is given by
$\|T_1\|^2_{L^2} + \|T_2\|^2_{L^2} 
= \int_0^1(\|T_1(s)\|^2 + \|T_2(s)\|^2)ds$.

Moreover, we have another $S^1$-action defined by
$e^{i\theta} \cdot [T]:=[T_0,-\sin\theta\ T_3 + \cos\theta\ T_1, 
T_2, \cos\theta\ T_3 + \sin\theta\ T_1]$,
which preserves $\omega_{G,2}$.
In this case the moment map is given by
$\|T_1\|^2_{L^2} + \|T_3\|^2_{L^2}$.
Thus we obtain the followings from Lemma \ref{potential}
\begin{prop}
Put $\mathcal{E} : N_G \to \mathbb{R}$ to be
\begin{eqnarray}
\mathcal{E}([T]) := \|T_1\|^2_{L^2} + \frac{1}{2}(\|T_2\|^2_{L^2} + \|T_3\|^2_{L^2}).\nonumber
\end{eqnarray}
Then $\omega_{G,1} =  2\sqrt{-1}\partial_1\overline{\partial}_1\mathcal{E}$.
\label{potential2}
\end{prop}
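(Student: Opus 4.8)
The plan is to recognize $\mathcal{E}$ as the average of the two moment maps already exhibited in the text and to apply Lemma \ref{potential} to each of the two circle actions separately. Write $f_{12} := \|T_1\|^2_{L^2} + \|T_2\|^2_{L^2}$ and $f_{13} := \|T_1\|^2_{L^2} + \|T_3\|^2_{L^2}$, so that $\mathcal{E} = \frac{1}{2}(f_{12} + f_{13})$. Both functions are $\mathcal{G}$-invariant, since the norm comes from an ${\rm Ad}_G$-invariant inner product, and hence descend to $N_G$; likewise both circle actions commute with the gauge action and descend. As $2\sqrt{-1}\partial_1\overline{\partial}_1$ is $\mathbb{R}$-linear, it suffices to establish $\omega_{G,1} = 2\sqrt{-1}\partial_1\overline{\partial}_1 f_{12}$ and $\omega_{G,1} = 2\sqrt{-1}\partial_1\overline{\partial}_1 f_{13}$ and then average.

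The first identity is immediate: the first circle action rotates $(T_1,T_2)$ and fixes $(T_0,T_3)$, so it preserves $\omega_{G,3}$ and satisfies the hypotheses $L_X\omega_{G,1} = \omega_{G,2}$, $L_X\omega_{G,2} = -\omega_{G,1}$, $L_X\omega_{G,3}=0$ of Lemma \ref{potential}; since its moment map with respect to $\omega_{G,3}$ is $f_{12}$, the lemma yields $\omega_{G,1} = 2\sqrt{-1}\partial_1\overline{\partial}_1 f_{12}$. For the second identity the idea is to relabel the hyper-K\"ahler triple so that the form preserved by the second action plays the role of the distinguished form $\omega_3$ in Lemma \ref{potential}. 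Setting $(\tilde I_1,\tilde I_2,\tilde I_3) := (I_{G,1}, -I_{G,3}, I_{G,2})$ gives a genuine quaternionic triple, since $\tilde I_1\tilde I_2\tilde I_3 = -I_{G,1}I_{G,3}I_{G,2} = -1$ matches the convention $I_1I_2I_3=-1$, and in this frame $\tilde\omega_1 = \omega_{G,1}$, $\tilde\omega_2 = -\omega_{G,3}$, $\tilde\omega_3 = \omega_{G,2}$. The second action rotates $(T_1,T_3)$ and fixes $(T_0,T_2)$, so it preserves $\tilde\omega_3 = \omega_{G,2}$, with moment map $f_{13}$. Once one checks that its generator obeys $L_X\tilde\omega_1 = \tilde\omega_2$ and $L_X\tilde\omega_2 = -\tilde\omega_1$ in the relabeled frame, Lemma \ref{potential} gives $\tilde\omega_1 = 2\sqrt{-1}\tilde\partial_1\overline{\tilde\partial}_1 f_{13}$; since $\tilde I_1 = I_{G,1}$ this reads $\omega_{G,1} = 2\sqrt{-1}\partial_1\overline{\partial}_1 f_{13}$, and averaging the two identities proves the proposition.

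The routine but delicate part, and the main place where sign errors can enter, is verifying the Lie-derivative relations $L_X\omega_{G,i}$ and the two moment-map formulas for each circle action. I would carry these out upstairs on the flat space $C^1([0,1],\mathbf{g})\otimes\mathbb{H}$, where the complex structures are quaternionic multiplications and the two circle actions are the evident rotations in the $(T_1,T_2)$- and $(T_1,T_3)$-planes, and then transport the computation to $N_G$ through the hyper-K\"ahler quotient. The orientation bookkeeping, in particular the sign forcing $\tilde I_2 = -I_{G,3}$ rather than $+I_{G,3}$, is exactly what is required so that the rotation direction of the second action matches the cyclic convention built into Lemma \ref{potential}; getting this sign right is the only genuine obstacle, as everything else is linear algebra and the averaging step.
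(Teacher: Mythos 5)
Your proposal is correct and follows essentially the same route as the paper: the text preceding Proposition \ref{potential2} introduces exactly the two circle actions rotating $(T_1,T_2)$ and $(T_1,T_3)$, identifies their moment maps $\|T_1\|^2_{L^2}+\|T_2\|^2_{L^2}$ and $\|T_1\|^2_{L^2}+\|T_3\|^2_{L^2}$, and deduces the proposition from Lemma \ref{potential} by combining (averaging) the two resulting potentials. Your explicit relabeling $(\tilde I_1,\tilde I_2,\tilde I_3)=(I_{G,1},-I_{G,3},I_{G,2})$ just spells out the sign bookkeeping that the paper leaves implicit when it invokes the lemma for the action preserving $\omega_{G,2}$.
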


Next we describe the K\"ahler potential $\mathcal{E}$
as a function on $T^*G^{\mathbb{C}} =
 G^{\mathbb{C}} \times \mathbf{g}^{\mathbb{C}}$.
The ${\rm Ad}_G$-invariant inner product on $\mathbf{g}$ induces 
a homogeneous Riemannian metric on $G\backslash G^{\mathbb{C}}$.
Define an antiholomorphic involution of $G^{\mathbb{C}}$ 
by $(ge^{\sqrt{-1}\xi})^* := e^{\sqrt{-1}\xi} g^{-1}$ for $g\in G$
and $\xi \in \mathbf{g}$, by using the polar decomposition 
$G^{\mathbb{C}} \cong G\cdot \exp (\sqrt{-1}\mathbf{g})$.
Then $G\backslash G^{\mathbb{C}}$ is identified with 
$\exp (\sqrt{-1}\mathbf{g})$ by $G\cdot g\mapsto g^*g$ for 
$g\in G^{\mathbb{C}}$.
This metric on $G\backslash G^{\mathbb{C}}$ is naturally extended to 
a hermitian metric on $T_h (G\backslash G^{\mathbb{C}})\otimes \mathbb{C}$,
which is also denoted by $\|\cdot \|_h$.

\begin{prop}
For each $(Q,\eta)\in G^{\mathbb{C}} \times \mathbf{g}^{\mathbb{C}} 
= N_G$, 
\begin{eqnarray}
\mathcal{E}(Q,\eta) = \frac{1}{2}\min_{h\in P(a^*a,b^*b)}
\int_0^1 \bigg( \bigg\| \frac{dh}{ds} \bigg\|^2_{h} 
+ \|h{\rm Ad}_a^{-1}(\eta) \|_h^2\bigg) ds,\nonumber
\end{eqnarray}
where $ab^{-1} =Q$ and $P(h_0,h_1):=\{h\in C^{\infty}([0,1],
G\backslash G^{\mathbb{C}});\ h(0)=h_0, h(1)=h_1 \}$.
\label{kahlerpotential}
\end{prop}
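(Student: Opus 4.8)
The plan is to evaluate $\mathcal{E}$ directly on the Nahm data representing $(Q,\eta)$ and to recognise the resulting $L^2$-energy as an energy functional on the symmetric space $G\backslash G^{\mathbb{C}}$. First I would use the holomorphic description of Subsection \ref{3.2}: writing $\alpha=T_0+\sqrt{-1}T_1$ and $\beta=T_2+\sqrt{-1}T_3$, the complex Nahm equation reads $\frac{d\beta}{ds}+[\alpha,\beta]=0$ and is solved by $\beta(s)=\mathrm{Ad}_{v(s)}\eta$, where $\frac{dv}{ds}v^{-1}=-\alpha$, $v(0)=1$, $v(1)=Q^{-1}$. Fixing a factorisation $Q=ab^{-1}$ and setting $w(s):=v(s)a$, I obtain a path $w:[0,1]\to G^{\mathbb{C}}$ with $w(0)=a$, $w(1)=b$, satisfying $w'w^{-1}=v'v^{-1}=-\alpha$ and $\beta=\mathrm{Ad}_{w}(\mathrm{Ad}_a^{-1}\eta)$. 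By Proposition \ref{potential2}, $\mathcal{E}=\|T_1\|^2_{L^2}+\frac12(\|T_2\|^2_{L^2}+\|T_3\|^2_{L^2})$, so in terms of $w$,
\[ \mathcal{E}=\int_0^1\|\mathrm{Im}(w'w^{-1})\|^2\,ds+\tfrac12\int_0^1\|\mathrm{Ad}_{w}(\mathrm{Ad}_a^{-1}\eta)\|^2\,ds. \]

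Next I would pass to the symmetric space. The based complex gauge group $\mathcal{G}_0^{\mathbb{C}}=\{g\in C^2([0,1],G^{\mathbb{C}}):g(0)=g(1)=1\}$ acts on complex solutions by $w\mapsto gw$ and preserves $(Q,\eta)$, so the complex solutions representing $(Q,\eta)$ are exactly the paths $w$ with the fixed endpoints $w(0)=a$, $w(1)=b$. Since $\mathcal{E}$ is invariant under the based real gauge group $\mathcal{G}_0$ (acting by $w\mapsto kw$), it descends to the quotient, which the polar decomposition identifies with $P(a^*a,b^*b)$ via $w\mapsto h:=w^*w$: here $h(0)=a^*a$, $h(1)=b^*b$, and every such $h$ is realised by some $w$. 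A direct computation gives $\frac{dh}{ds}=w^*(\xi+\xi^*)w$ with $\xi=w'w^{-1}$, so that, after normalising the induced metric on $G\backslash G^{\mathbb{C}}$, one has $\|\frac{dh}{ds}\|_h^2=2\|\mathrm{Im}(w'w^{-1})\|^2$; moreover $\mathrm{Ad}_G$-invariance of the norm shows that $\|\mathrm{Ad}_{w}(\mathrm{Ad}_a^{-1}\eta)\|^2$ depends only on $h=w^*w$ and equals $\|h\,\mathrm{Ad}_a^{-1}(\eta)\|_h^2$ in the notation preceding the statement. Thus the displayed expression for $\mathcal{E}$ becomes $\frac12\int_0^1(\|\frac{dh}{ds}\|_h^2+\|h\,\mathrm{Ad}_a^{-1}(\eta)\|_h^2)\,ds$, evaluated at the particular $h$ coming from $[T]$.

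It then remains to see that this $h$ is the minimiser. I would compute the first variation of the right-hand functional along $w\mapsto e^{\epsilon\phi}w$ with $\phi=\sqrt{-1}\psi$, $\psi\in C^\infty([0,1],\mathbf{g})$, $\psi(0)=\psi(1)=0$: using $\delta\xi=\phi'+[\phi,\xi]$ and $\delta\beta=[\phi,\beta]$, integration by parts together with $\mathrm{Ad}$-invariance yields
\[ \delta\mathcal{E}=\int_0^1 2\big\langle\psi,\;T_1'+[T_0,T_1]+[T_2,T_3]\big\rangle\,ds. \]
Hence the Euler--Lagrange equation is precisely the remaining (real) component of the Nahm equations (\ref{nahm}), so the critical points are exactly the genuine Nahm solutions representing $(Q,\eta)$. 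Because $G\backslash G^{\mathbb{C}}$ is nonpositively curved and both the path-energy term and the pointwise terms $h\mapsto\|h\,\mathrm{Ad}_a^{-1}(\eta)\|_h^2$ are geodesically convex (the convexity studied in Section \ref{hamilton}), the functional is convex and coercive on $P(a^*a,b^*b)$, so its unique critical point, which by \cite{K2} is the Nahm solution $[T]$, is the global minimum. Evaluating there gives the stated formula; the value is independent of the factorisation $Q=ab^{-1}$, since changing $a,b$ conjugates the boundary data by an isometry of $G\backslash G^{\mathbb{C}}$. The main obstacle is this last step: establishing existence, uniqueness and regularity of the minimiser of an energy functional on an infinite-dimensional space of paths and matching its Euler--Lagrange equation to the real Nahm equation; the bookkeeping of boundary conditions and the normalisation of the metric on $G\backslash G^{\mathbb{C}}$ are routine but must be tracked carefully.
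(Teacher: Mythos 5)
Your proposal follows essentially the same route as the paper's proof: both rewrite $\mathcal{E}$ on the based complexified gauge orbit of the Nahm data via the substitution $h=w^*w$ (the paper's $h=(gu)^*(gu)$), identify that orbit with $P(a^*a,b^*b)$, and reduce the minimization claim to the fact that the genuine Nahm solution minimizes $\mathcal{L}$ on its $\mathcal{G}_0^{\mathbb{C}}$-orbit. The only difference is that the paper outsources this last fact and the accompanying computation to Lemma 2.3 of Donaldson \cite{Do}, whereas you re-derive it by the first-variation computation (whose Euler--Lagrange equation is the real Nahm equation) together with geodesic convexity on the Hadamard space $G\backslash G^{\mathbb{C}}$ --- which is precisely Donaldson's argument, so your sketch is correct modulo the same metric-normalization bookkeeping that the paper itself glosses over.
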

\begin{proof}
The essential part of the proof is obtained in \cite{Do}, 
and we explain the outline.
Define $\mathcal{L} : C^1([0,1],\mathbf{g})
\otimes \mathbb{H} \to \mathbb{R}$ by
$\mathcal{L}(T):=\|T_1\|^2_{L^2} + \frac{1}{2}(\|T_2\|^2_{L^2} + 
\|T_3\|^2_{L^2})$.
Fix $T\in \mathcal{N}_G$,
then $\mathcal{L}|_{\mathcal{G}_0^{\mathbb{C}}\cdot T}$ 
attains its minimum value at $T$ by Lemma 2.3 of \cite{Do},
hence
\begin{eqnarray}
\mathcal{E}([T]) = \min_{g\in \mathcal{G}_0^{\mathbb{C}}} 
\mathcal{L}(g\cdot T).\nonumber
\end{eqnarray}
Here $\mathcal{G}_0^{\mathbb{C}}$ is the complexified gauge group
defined by
\begin{eqnarray}
\mathcal{G}_0^{\mathbb{C}}:=\{g\in \mathcal{G}^{\mathbb{C}} 
= C^2([0,1],G^{\mathbb{C}});\ 
g(0) = g(1) = 1\},\nonumber
\end{eqnarray}
and a $\mathcal{G}_0^{\mathbb{C}}$ action on 
$C^1([0,1],\mathbf{g}) \otimes \mathbb{H}$ is defined by
$g\cdot (T_0 +\sqrt{-1}T_1, T_2 +\sqrt{-1}T_3) := 
({\rm Ad}_g(T_0 +\sqrt{-1}T_1) + g\frac{d}{ds}g^{-1}, 
{\rm Ad}_g(T_2 +\sqrt{-1}T_3))$.
Take $u:[0,1]\to G^{\mathbb{C}}$ as in Section \ref{3.2},
and let $(Q,\eta) \in G^{\mathbb{C}} \times \mathbf{g}^{\mathbb{C}}$
be corresponding to $[T]\in N_G$ under the identification
given in Section \ref{3.2}.
Then we have $Q = u(0)u(1)^{-1}$, $T_2(s) +\sqrt{-1}T_3(s) 
= {\rm Ad}_{u(s)u(0)^{-1}}\eta$.
For $g\in \mathcal{G}_0^{\mathbb{C}}$,
\begin{eqnarray}
g\cdot T = \bigg(g(s)u(s)\frac{d}{ds}(gu)^{-1}(s), 
\ {\rm Ad}_{g(s)u(s)}{\rm Ad}_{u(0)^{-1}}\eta \bigg).\nonumber
\end{eqnarray}
Now we extend the ${\rm Ad}_G$-invariant inner product on $\mathbf{g}$ to the 
$\mathbb{C}$ bilinear form on $\mathbf{g}^{\mathbb{C}}$. 
From the calculation in the proof of Lemma 2.3 of \cite{Do},
we obtain
\begin{eqnarray}
\mathcal{L} (g\cdot T) = \int_0^1\big( \frac{1}{4}\| h' \|_h^2 
+ \frac{1}{2}\| h {\rm Ad}_{u(0)^{-1}}(\eta) \|_h^2 \big) ds,\nonumber
\end{eqnarray}
where $h = (gu)^*(gu)$.
Thus we obtain
\begin{eqnarray}
\mathcal{E}([T]) = \min_{h\in P(h_0,h_1)} 
\int_0^1\big( \frac{1}{4}\| h' \|_h^2 
+ \frac{1}{2}\| h {\rm Ad}_{u(0)^{-1}}(\eta) \|_h^2 \big) ds,\nonumber
\end{eqnarray}
where $h_0 = u(0)^*u(0)$, $h_1 = u(1)^*u(1)$ and $u(0)u(1)^{-1} = Q$,
hence we have the assertion.
\end{proof}

\section{K\"ahler manifolds with Hamiltonian actions}\label{hamilton}
Let $L^{\mathbb{C}}$ be a complexification of connected compact Lie group $L$,
and $L^{\mathbb{C}}$ acts on a complex manifold $(X,I)$ holomorphically.
Let $\omega$ be a K\"ahler form on $X$, and $L$ acts on $(X,I,\omega)$ 
isometrically.
Suppose that there is a symplectic moment map ${m}: X \to \mathbf{l}^*$ 
with respect to $L$ action, where $\mathbf{l}$ is the Lie algebra of $L$.

Fix an ${\rm Ad_L}$-invariant inner product on $\mathbf{l}$, then the Riemannian 
metric on the homogeneous space $L\backslash L^{\mathbb{C}}$ is induced.
We denote by $Lg\in L\backslash L^{\mathbb{C}}$ the equivalence class 
represented by $g\in L^{\mathbb{C}}$, and put 
\begin{eqnarray}
\xi_g:=\frac{d}{dt}\Big|_{t=0} Le^{\sqrt{-1}t\xi}g \in T_{Lg} 
(L\backslash L^{\mathbb{C}})\nonumber
\end{eqnarray}
for each $\xi \in \mathbf{l}$, where 
$\{Le^{\sqrt{-1}t\xi}g\}_{t\in \mathbb{R}}$ is a geodesic through $Lg$.

Now define a $1$-form $\alpha_{x,\zeta} \in 
\Omega^1(L\backslash L^{\mathbb{C}})$
by $(\alpha_{x,\zeta})_{Lg}(\xi_g):= \langle {m} (gx) - \zeta, \xi \rangle$
for $\zeta \in Z_L$, which is easily checked to be closed.
Since $L\backslash L^{\mathbb{C}} \cong \mathbf{l}$ is simply-connected,
$\alpha_{x,\zeta}$ is $d$-exact and there is a unique primitive function 
up to constant.
Accordingly, there is a unique function 
$\Phi_{x,\zeta}:L\backslash L^{\mathbb{C}} 
\to \mathbb{R}$ satisfying $d \Phi_{x,\zeta} = \alpha_{x,\zeta}$ and 
$\Phi_{x,\zeta}(L\cdot 1) = 0$. 
Here, the latter equality is a normalization for removing the ambiguity, 
and it is not essential. 
Then it is easy to check that $\Phi_{x,\zeta}$ is a geodesically convex 
function on the Riemannian symmetric space $L\backslash L^{\mathbb{C}}$.

From the argument in \cite{Ki}\cite{Ne}, the naturally induced map
\begin{eqnarray}
{m}^{-1}(\zeta)/L \to X_{\zeta} /L^{\mathbb{C}}\nonumber
\end{eqnarray}
becomes a homeomorphism, where
\begin{eqnarray}
X_{\zeta} := \{x\in X;\ \Phi_{x,\zeta}\ {\rm has\ a\ critical\ point} \}.
\nonumber
\end{eqnarray}
In this subsection we show some equivalent conditions for the existence of 
the critical point of $\Phi_{x,\zeta}$.

For $g\in L^{\mathbb{C}}$, we denote by $R_g$ the isometry on 
$L\backslash L^{\mathbb{C}}$ given by the right action of $L^{\mathbb{C}}$ 
on $L\backslash L^{\mathbb{C}}$.
Then we can check that $R_g^*\alpha_{x,\zeta} = \alpha_{gx,\zeta}$, 
and $R_g^*\Phi_{x,\zeta} - \Phi_{gx,\zeta}$ becomes a constant function, 
hence $X_{\zeta}$ is $L^{\mathbb{C}}$-closed.

Let
\begin{eqnarray}
{\rm Stab}(x)^{\mathbb{C}} := \{g\in L^{\mathbb{C}};\ gx = x\},\nonumber
\end{eqnarray}
and ${\rm stab}(x)^{\mathbb{C}}$ be 
the Lie algebra of ${\rm Stab}(x)^{\mathbb{C}}$.
We put
\begin{eqnarray}
{\rm Stab}(x) &:=& \{g\in L;\ gx = x\} = {\rm Stab}(x)^{\mathbb{C}}\cap L,
\nonumber\\
{\rm stab}(x) &:=& {\rm Lie}({\rm Stab}(x)) = {\rm stab}(x)^{\mathbb{C}}
\cap \mathbf{l}.\nonumber
\end{eqnarray}
Note that ${\rm Stab}(x)^{\mathbb{C}}$ contains the complexification of 
${\rm Stab}(x)$ as a subgroup, though it is not necessary to be equal.
Let $\pi_{\rm Im} : \mathbf{l}^{\mathbb{C}}\to \mathbf{l}$ be defined by
$\pi_{\rm Im}(a+\sqrt{-1}b)=b$ for $a,b\in\mathbf{l}$, and put
$\widetilde{{\rm stab}}(x):= 
\pi_{\rm Im}({\rm stab}(x)^{\mathbb{C}})$.
Then there is the orthogonal decomposition
$\mathbf{l} = \widetilde{{\rm stab}}(x) \oplus V_x$ with respect to 
${\rm Ad_L}$-invariant inner product on $\mathbf{l}$.

\begin{lem}
For each $g\in L^{\mathbb{C}}$, there are 
$\gamma \in {\rm Stab}(x)^{\mathbb{C}}$ and $\xi \in V_x$
such that $Lg\gamma = Le^{\sqrt{-1}\xi}$.
\label{reduce}
\end{lem}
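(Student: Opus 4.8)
The plan is to read the statement as the group decomposition
\[
L^{\mathbb{C}} \;=\; L\cdot \exp(\sqrt{-1}\,V_x)\cdot {\rm Stab}(x)^{\mathbb{C}},
\]
which is exactly the assertion that every right ${\rm Stab}(x)^{\mathbb{C}}$-orbit $Lg\cdot {\rm Stab}(x)^{\mathbb{C}}$ in $L\backslash L^{\mathbb{C}}$ meets the ``slice'' $\{Le^{\sqrt{-1}\xi}:\xi\in V_x\}$. Write $\mathbf{s}:={\rm stab}(x)^{\mathbb{C}}$ and use the identification of the symmetric space $P:=L\backslash L^{\mathbb{C}}$ with the positive elements $\exp(\sqrt{-1}\mathbf{l})$ via $Lg\mapsto g^*g$, as in Section \ref{T*G^c}; under it the base point $L\cdot 1$ is $1$, the right action of $\gamma\in{\rm Stab}(x)^{\mathbb{C}}$ becomes $X\mapsto \gamma^*X\gamma$, and $d(L\cdot1,X)^2=\|\log X\|^2$. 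The $\gamma$ I would produce is the one realizing the point of the orbit closest to the base point: minimize $d(L\cdot1,\cdot)^2$ over the orbit $\mathcal{O}:=Lg\cdot{\rm Stab}(x)^{\mathbb{C}}$, equivalently $F(\gamma):=\|\log((g\gamma)^*(g\gamma))\|^2$ over $\gamma$.

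First I would show the minimum is attained. The stabilizer ${\rm Stab}(x)^{\mathbb{C}}$ is closed in $L^{\mathbb{C}}$, and since $L$ is compact the projection $L^{\mathbb{C}}\to P$ is proper, so $\mathcal{O}$ is closed in $P$. As $P$ is a Riemannian symmetric space of noncompact type it is a complete Hadamard manifold, so closed balls are compact and $d(L\cdot1,\cdot)$ is proper on the closed set $\mathcal{O}$; hence it attains its minimum at some $p^\ast=Lg\gamma$ with $\gamma\in{\rm Stab}(x)^{\mathbb{C}}$. Writing $p^\ast=Le^{\sqrt{-1}\xi^\ast}$ via polar decomposition, the corresponding positive element is $X^\ast:=(g\gamma)^*(g\gamma)=e^{2\sqrt{-1}\xi^\ast}$, so $\log X^\ast=2\sqrt{-1}\,\xi^\ast$; it then remains only to prove $\xi^\ast\in V_x$.

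The crux is the first-order condition at $p^\ast$. Varying $\gamma e^{tZ}$ with $Z\in\mathbf{s}$ gives $X(t)=(e^{tZ})^*X^\ast e^{tZ}$, hence $\dot X(0)=Z^\dagger X^\ast+X^\ast Z$, where $Z^\dagger$ is the differential of the involution $*$; writing $Z=A+\sqrt{-1}B$ with $A,B\in\mathbf{l}$ one has $Z^\dagger=-A+\sqrt{-1}B$, so that $Z+Z^\dagger=2\sqrt{-1}\,\pi_{\rm Im}(Z)$. Applying the elementary spectral identity $\frac{d}{dt}\big|_{0}{\rm tr}\big((\log X(t))^2\big)=2\,{\rm tr}\big(\log X^\ast (X^\ast)^{-1}\dot X(0)\big)$, and using cyclicity of the trace together with the fact that $\log X^\ast$ commutes with $X^\ast$, the two terms collapse to $2\,{\rm tr}\big(\log X^\ast (Z+Z^\dagger)\big)$. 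Thus stationarity of $F$ reads, up to a nonzero constant, $\langle \xi^\ast,\pi_{\rm Im}(Z)\rangle=0$ for all $Z\in\mathbf{s}$. Since $\widetilde{{\rm stab}}(x)=\pi_{\rm Im}(\mathbf{s})$, this is precisely $\xi^\ast\perp\widetilde{{\rm stab}}(x)$, i.e. $\xi^\ast\in V_x$, and the minimizing $\gamma$ satisfies $Lg\gamma=Le^{\sqrt{-1}\xi^\ast}$ as required.

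The main obstacle I anticipate is the stationarity computation, and in particular keeping the involution/trace bookkeeping honest: one must verify $Z+Z^\dagger=2\sqrt{-1}\,\pi_{\rm Im}(Z)$, which is exactly what forces the correct slice to be the orthogonal complement of $\pi_{\rm Im}(\mathbf{s})$ rather than of ${\rm stab}(x)$, and one must apply the spectral derivative formula with the same ${\rm Ad}_L$-invariant inner product that defines $\|\cdot\|$ and $V_x$. A secondary point to check carefully is the closedness of $\mathcal{O}$, since ${\rm Stab}(x)^{\mathbb{C}}$ need not be reductive; here it is the compactness of $L$, rather than any structure of the stabilizer, that guarantees properness and hence the existence of the minimizer.
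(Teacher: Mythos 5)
Your proof is correct and follows essentially the same route as the paper: minimize the squared distance to the base point over the closed orbit $Lg\cdot{\rm Stab}(x)^{\mathbb{C}}$, obtain a minimizer by properness, and extract $\xi\in V_x$ from the first-order condition. The only difference is organizational — you fold the paper's two steps (stationarity gives $\xi\perp\widetilde{{\rm stab}}(hg\gamma_0 x)$, then an ${\rm Ad}_{e^{\sqrt{-1}\xi}}$-conjugation transfers this to $\xi\perp\widetilde{{\rm stab}}(x)$) into a single trace computation, which is the same manipulation in matrix form.
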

\begin{proof}
Consider the smooth function $f:Lg\cdot{\rm Stab}(x)^{\mathbb{C}} 
\to \mathbb{R}$
defined by 
\begin{eqnarray}
f(Lg\gamma):= {\rm dist}_{L\backslash L^{\mathbb{C}}}
(L\cdot 1,Lg\gamma)^2,\nonumber
\end{eqnarray}
where $Lg\cdot{\rm Stab}(x)^{\mathbb{C}} \subset 
L\backslash L^{\mathbb{C}}$ is the 
${\rm Stab}(x)^{\mathbb{C}}$-orbit through $Lg$.
Now ${\rm Stab}(x)^{\mathbb{C}}$ is closed in $L^{\mathbb{C}}$, then 
$Lg\cdot{\rm Stab}(x)^{\mathbb{C}}$ is the closed orbit, hence $f$ is proper.
Since $f$ is bounded from below, there is a minimum point 
$Lg\gamma_0 \in Lg\cdot{\rm Stab}(x)^{\mathbb{C}}$.
By the polar decomposition $L^{\mathbb{C}}\cong L\times 
\mathbf{l}$, we can take $h\in L$ and 
$\xi \in \mathbf{l}$ such that $hg\gamma_0 = e^{\sqrt{-1}\xi}$.
Under the identification $T_{Lg\gamma_0}(L\backslash L^{\mathbb{C}})
\cong \mathbf{l}$, the subspace 
$T_{Lg\gamma_0} (Lg\cdot{\rm Stab}(x)^{\mathbb{C}})$ is identified 
with $\widetilde{{\rm stab}}(hg\gamma_0 x) = \widetilde{{\rm stab}}(hg x)$.
Since the derivative of $f$ at $Lg\gamma_0$ vanishes, we have 
$\xi \in V_{hg\gamma_0 x}$.

Take $\hat{b} \in \widetilde{{\rm stab}}(x)$ arbitrarily, and fix 
$\hat{a} \in\mathbf{l}$ such that 
$\hat{a}+ \sqrt{-1}\hat{b}\in {\rm stab}( x)^{\mathbb{C}}$.
Since ${\rm stab}(hg\gamma_0 x)^{\mathbb{C}} = 
{\rm Ad}_{hg\gamma_0} ({\rm stab}( x)^{\mathbb{C}} )$, there is 
$a+\sqrt{-1}b \in {\rm stab}(hg\gamma_0 x)^{\mathbb{C}}$ 
and $a+\sqrt{-1}b = 
{\rm Ad}_{hg\gamma_0}(\hat{a} + \sqrt{-1}\hat{b})$.
Since $b\in \widetilde{{\rm stab}}(hg\gamma_0 x)$, we have
\begin{eqnarray}
0 = 2 \sqrt{-1}\langle \xi, b\rangle 
&=& \langle \xi, {\rm Ad}_{hg\gamma_0}(\hat{a} 
+ \sqrt{-1}\hat{b}) + ({\rm Ad}_{hg\gamma_0}(\hat{a} 
+ \sqrt{-1}\hat{b}))^*\rangle\nonumber\\
&=& \langle {\rm Ad}_{(hg\gamma_0)^{-1}}\xi, \hat{a} 
+ \sqrt{-1}\hat{b} \rangle + \langle {\rm Ad}_{(hg\gamma_0)^*}\xi
,(\hat{a} + \sqrt{-1}\hat{b})^*\rangle\nonumber\\
&=& \langle {\rm Ad}_{e^{-\sqrt{-1}\xi}}\xi, \hat{a} 
+ \sqrt{-1}\hat{b} \rangle + \langle {\rm Ad}_{e^{\sqrt{-1}\xi}}\xi
,(\hat{a} + \sqrt{-1}\hat{b})^*\rangle\nonumber\\
&=& \langle \xi, \hat{a} 
+ \sqrt{-1}\hat{b} \rangle + \langle \xi
,-\hat{a} + \sqrt{-1}\hat{b}\rangle\nonumber\\
&=& 2\sqrt{-1} \langle \xi, \hat{b} \rangle.\nonumber
\end{eqnarray}
Thus we obtain $\xi \in V_x$.
\end{proof}

\begin{prop}
$\Phi_{x,\zeta}$ has a critical point if and only if
all of the following conditions are satisfied for some $g\in L^{\mathbb{C}}$;
$(i)$ $\Phi_{gx,\zeta}$ is ${\rm Stab}(gx)^{\mathbb{C}}$ invariant, 
$(ii)$ $\lim_{t\to \infty} \Phi_{gx,\zeta}(Le^{\sqrt{-1}t\xi}) = \infty$ 
for any $\xi \in \mathbf{l} - \widetilde{{\rm stab}}(gx)$.
 \label{4.8}
\end{prop}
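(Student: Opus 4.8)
The plan is to use that $\Phi_{x,\zeta}$ is geodesically convex on the Hadamard manifold $L\backslash L^{\mathbb{C}}$, so that every critical point is automatically a global minimum, and then to translate the attainment of this minimum into conditions (i) and (ii). The starting point is that $\Phi_{x,\zeta}$ has a critical point at $Lg_{0}$ if and only if $\alpha_{x,\zeta}$ vanishes there, which by the formula $(\alpha_{x,\zeta})_{Lg}(\xi_{g}) = \langle m(gx)-\zeta,\xi\rangle$ means exactly $m(g_{0}x)=\zeta$. Hence ``$\Phi_{x,\zeta}$ has a critical point'' is equivalent to ``$m(gx)=\zeta$ for some $g\in L^{\mathbb{C}}$.'' Using $R_{g}^{*}\Phi_{x,\zeta}=\Phi_{gx,\zeta}+\mathrm{const}$, a critical point of $\Phi_{x,\zeta}$ at $Lhg$ corresponds to one of $\Phi_{gx,\zeta}$ at $Lh$; so, writing $y:=gx$, I would reduce the whole statement to: $\Phi_{y,\zeta}$ attains its minimum at $L\cdot 1$ (i.e.\ $m(y)=\zeta$) if and only if (i) and (ii) hold for $y$.

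For the forward implication I assume $m(y)=\zeta$, so $L\cdot 1$ is a global minimum with value $0$. For (i): since $R_{\gamma}$ is an isometry and $\gamma y=y$ for $\gamma\in{\rm Stab}(y)^{\mathbb{C}}$, the function $R_{\gamma}^{*}\Phi_{y,\zeta}=\Phi_{y,\zeta}+c_{\gamma}$ has the same minimum value as $\Phi_{y,\zeta}$, which forces $c_{\gamma}=0$; thus $\Phi_{y,\zeta}$ is ${\rm Stab}(y)^{\mathbb{C}}$-invariant. For (ii): along the geodesic $t\mapsto Le^{\sqrt{-1}t\xi}$ the function $\phi(t):=\Phi_{y,\zeta}(Le^{\sqrt{-1}t\xi})$ is convex with $\phi'(0)=\langle m(y)-\zeta,\xi\rangle=0$, and the moment-map identity yields $\phi''(t)=\|\xi_{X}|_{e^{\sqrt{-1}t\xi}y}\|^{2}$, where $\xi_{X}$ is the vector field on $X$ generated by $\xi$. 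If $\xi\notin\widetilde{{\rm stab}}(y)$ then $\xi\notin{\rm stab}(y)$, so $\phi''(0)=\|\xi_{X}|_{y}\|^{2}>0$; as $\phi'$ is nondecreasing this gives $\lim_{t\to\infty}\phi'(t)>0$, whence $\phi(t)\to\infty$, which is (ii).

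For the converse I assume (i) and (ii) for some $g$ and set $y:=gx$; it is enough to show that the convex function $\Phi_{y,\zeta}$ attains a minimum. Choosing a minimizing sequence and applying Lemma~\ref{reduce}, I would move each of its points by some $\gamma\in{\rm Stab}(y)^{\mathbb{C}}$ into the slice $\{Le^{\sqrt{-1}\xi}:\xi\in V_{y}\}$; by (i) this leaves $\Phi_{y,\zeta}$ unchanged, producing a minimizing sequence $Le^{\sqrt{-1}\xi_{n}}$ with $\xi_{n}\in V_{y}$. If $\{\xi_{n}\}$ is bounded, a convergent subsequence gives a minimizer by continuity. If $|\xi_{n}|\to\infty$, pass to a subsequence with $u_{n}:=\xi_{n}/|\xi_{n}|\to u\in V_{y}$, $|u|=1$; geodesic convexity along $s\mapsto Le^{\sqrt{-1}s\xi_{n}}$, $s\in[0,1]$, gives
\[
\Phi_{y,\zeta}\big(Le^{\sqrt{-1}\tau u_{n}}\big)\le (1-s_{n})\,\Phi_{y,\zeta}(L\cdot 1)+s_{n}\,\Phi_{y,\zeta}\big(Le^{\sqrt{-1}\xi_{n}}\big),\qquad s_{n}=\tau/|\xi_{n}|,
\]
for each fixed $\tau\ge 0$ and $n$ large. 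Letting $n\to\infty$ (the right-hand side stays bounded above because $s_{n}\to 0$ and the endpoint values are controlled) yields $\Phi_{y,\zeta}(Le^{\sqrt{-1}\tau u})\le\Phi_{y,\zeta}(L\cdot 1)$ for all $\tau\ge 0$. Since $u\in V_{y}\setminus\{0\}\subseteq\mathbf{l}-\widetilde{{\rm stab}}(y)$, this contradicts (ii); hence the minimum is attained.

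The step I expect to be the main obstacle is this last properness argument: upgrading the ray-by-ray growth of (ii) to the genuine attainment of the minimum. The crucial leverage is the slice reduction of Lemma~\ref{reduce} combined with (i), which confines a minimizing sequence to the transverse subspace $V_{y}$, so that the only escape directions to exclude are precisely those controlled by (ii); the convex-interpolation estimate then manufactures a bounded direction of non-growth and contradicts (ii). Along the way I would also verify carefully the identity $\phi''(t)=\|\xi_{X}\|^{2}$ (equivalently that $\sqrt{-1}\xi$ acts as $I\xi_{X}$ under the holomorphic $L^{\mathbb{C}}$-action) and the inclusions ${\rm stab}(y)\subseteq\widetilde{{\rm stab}}(y)$ and $V_{y}\setminus\{0\}\subseteq\mathbf{l}-\widetilde{{\rm stab}}(y)$ used to align the two conditions.
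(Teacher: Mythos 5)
Your argument is correct, and the forward implication is essentially identical to the paper's: reduce to $g=1$, observe that the critical point is a global minimum by convexity, derive $(ii)$ from the strict positivity of the second derivative $\|\xi^*_x\|^2$ transverse to the stabilizer, and derive $(i)$ from the fact that $R_\gamma^*\Phi_{x,\zeta}-\Phi_{x,\zeta}$ is a constant that must vanish at a minimum. The converse also shares its key structural step with the paper — both use Lemma \ref{reduce} together with $(i)$ to confine the search for a minimum to the slice $\exp(V_{gx})$ — but the final existence argument differs. The paper is constructive: for each unit direction $\xi\in S(V_{gx})$ it uses convexity and the two-sided divergence from $(ii)$ to locate the unique zero $\hat t(\xi)$ of the radial derivative, shows $\hat t$ is smooth by the implicit function theorem, and then minimizes the resulting function over the compact sphere $S(V_{gx})$. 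You instead argue by contradiction with a minimizing sequence: if the sequence $\xi_n\in V_{gx}$ escapes to infinity, the convex interpolation along geodesics from $L\cdot 1$ produces a limiting unit direction $u\in V_{gx}$ along which $\Phi_{gx,\zeta}$ stays bounded above, contradicting $(ii)$. Your route is slightly more robust (it does not need smoothness of $\hat t$ or even differentiability beyond convexity, and it transparently handles the possibility that the infimum is $-\infty$), while the paper's route yields a bit more structure (a smooth section of minimizers over the sphere of directions). Both are valid; the only points you flagged as needing verification — ${\rm stab}(y)\subseteq\widetilde{{\rm stab}}(y)$, $V_y\setminus\{0\}\subseteq\mathbf{l}-\widetilde{{\rm stab}}(y)$, and the identity $\phi''=\|\xi^*_x\|^2_\omega$ — are exactly the facts the paper also relies on, the first two following from ${\rm stab}(y)^{\mathbb{C}}$ being a complex subalgebra and from the orthogonality of the decomposition $\mathbf{l}=\widetilde{{\rm stab}}(y)\oplus V_y$.
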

\begin{proof}
Let $\Phi_{x,\zeta}$ has a critical point $Lg\in L\backslash L^{\mathbb{C}}$ 
for $g\in L^{\mathbb{C}}$. 
Since $\Phi_{gx,\zeta} - R_g^*\Phi_{x,\zeta}$ is a 
constant function, we may suppose $g=1$ by the homogeneity.
Then $\frac{d}{dt}|_{t=0}\Phi_{x,\zeta}(Le^{\sqrt{-1}t\xi}) = 0$ for all 
$\xi\in \mathbf{l}$.
If $\xi \notin \widetilde{{\rm stab}}(x)$, especially 
$\xi \notin {\rm stab}(x)$,
\begin{eqnarray}
\frac{d^2}{dt^2}\Big|_{t=0}
\Phi_{x,\zeta}(Le^{\sqrt{-1}t\xi}) = \|\xi^*_x \|_{\omega}^2 > 0\nonumber
\end{eqnarray}
and 
there exists sufficiently small $\delta >0$ and $\frac{d}{dt}\Phi_{x,\zeta}
(Le^{\sqrt{-1}t\xi}) \ge \delta$ for all $t\ge 1$.
Thus we obtain $\lim_{t\to \infty} \Phi_{x,\zeta}(Le^{\sqrt{-1}t\xi}) = \infty$.
For any $\xi \in \mathbf{l}$, we have 
$\frac{d}{dt}\Phi_{x,\zeta}(Le^{\sqrt{-1}t\xi}) \ge 0$ for any $t>0$
and obtain $\Phi_{x,\zeta}(Le^{\sqrt{-1}\xi}) \ge \Phi_{x,\zeta}(L\cdot 1)$,
hence $\Phi_{x,\zeta}(L\cdot 1)$ is the minimum value of 
$\Phi_{x,\zeta}$, especially $\Phi_{x,\zeta}$ is bounded from below.
Next we show that $\Phi_{x,\zeta}$ is ${\rm Stab}(x)^{\mathbb{C}}$ invariant.
For any $\gamma \in {\rm Stab}(x)^{\mathbb{C}}$, 
$R_{\gamma}^*\Phi_{x,\zeta} - \Phi_{x,\zeta}$ is a constant function.
If we put $R_{\gamma}^*\Phi_{x,\zeta} - \Phi_{x,\zeta} = c_{\gamma} 
\in \mathbb{R}$, then we have $\Phi_{x,\zeta}(L\gamma) 
= \Phi_{x,\zeta}(L\cdot 1) + c_{\gamma}$ and $\Phi_{x,\zeta}(L\gamma^{-1}) 
= \Phi_{x,\zeta}(L\cdot 1) - c_{\gamma}$. 
Since $\Phi_{x,\zeta}(L\cdot 1)$ is a minimum value, 
then $c_{\gamma}$ should be zero, which implies $\Phi_{x,\zeta}$ is 
${\rm Stab}(x)^{\mathbb{C}}$ invariant.

Conversely, assume that the conditions $(i)$-$(ii)$ hold.
It suffices to show that $\Phi_{gx,\zeta}$ has a minimum point 
in $L\backslash L^{\mathbb{C}}$.
To see it, it suffices to see that $\Phi_{gx,\zeta}|_{\exp(V_{gx})}$ has 
a minimum point by applying Lemma \ref{reduce}, where
\begin{eqnarray}
\exp(V_{gx}) &:=& \{L\cdot e^{\sqrt{-1}\xi}\in L\backslash L^{\mathbb{C}}
;\ \xi \in V_{gx}\}\nonumber
\end{eqnarray}
and $\mathbf{l} = \widetilde{{\rm stab}}(gx) \oplus V_{gx}$ is 
the orthogonal decomposition
with respect to the ${\rm Ad}_L$-invariant inner product.
Define a smooth function $F: S(V_{gx})\times \mathbb{R} \to \mathbb{R}$ by
\begin{eqnarray}
F(\xi,t) := \frac{d}{dt}\Phi_{gx,\zeta}(e^{\sqrt{-1}t\xi}),\nonumber
\end{eqnarray}
where $S(V_{gx}):= \{\xi\in V_{gx}; \|\xi \| =1\}$.
Now, recall that $t\mapsto \Phi_{gx,\zeta}(e^{\sqrt{-1}t\xi})$ is convex
and we have assumed that
$\lim_{|t|\to \infty}\Phi_{gx,\zeta}(e^{\sqrt{-1}t\xi}) = \infty$ for 
all $\xi\in S(V_{gx})$.
It implies that there exists a unique $\hat{t}(\xi)\in\mathbb{R}$
for each $\xi \in S(V_{gx})$ such that $F(\xi,\hat{t}(\xi)) = 0$.
Since $\frac{\partial}{\partial t}F(\xi,t) = \|\xi^*_x \|_{\omega}^2 > 0$,
$\hat{t} : S(V_{gx}) \to \mathbb{R}$ is smooth by Implicit Function Theorem.
In particular, $\xi \mapsto \Phi_{gx,\zeta}(e^{\sqrt{-1}\hat{t}(\xi)\xi})$
becomes a smooth function on the compact manifold $S(V_{gx})$, hence it has a
minimum point $\xi_{min}\in S(V_{gx})$.
It is easy to see $\exp (\hat{t}(\xi_{min})\xi_{min}) \in \exp(V_{gx})$ is
a minimum point of $\Phi_{gx,\zeta}|_{\exp(V_{gx})}$.
\end{proof}

Since each ${\rm Stab}(x)^{\mathbb{C}}$-orbit is closed in 
$L\backslash L^{\mathbb{C}}$, the distance on $L\backslash L^{\mathbb{C}}$ 
induces a structure of a metric space on $L\backslash L^{\mathbb{C}}/
{\rm Stab}(x)^{\mathbb{C}}$. 
If $\Phi_{x,\zeta}$ is ${\rm Stab}(x)^{\mathbb{C}}$-invariant, then it
induces a function $\bar{\Phi}_{x,\zeta}:L\backslash L^{\mathbb{C}}/
{\rm Stab}(x)^{\mathbb{C}} \to \mathbb{R}$.

\begin{prop}
$\Phi_{x,\zeta}$ has a critical point if and only if 
$\Phi_{x,\zeta}$ is ${\rm Stab}(x)^{\mathbb{C}}$-invariant,
and $\bar{\Phi}_{x,\zeta}$ is proper and bounded from below.
 \label{proper}
\end{prop}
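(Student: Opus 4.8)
The plan is to deduce this from Proposition \ref{4.8}, converting the pointwise growth condition there into the global statements that $\bar{\Phi}_{x,\zeta}$ is proper and bounded from below. I will treat the two implications separately; the only genuinely new work lies in the forward direction.

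For the easy implication, suppose $\Phi_{x,\zeta}$ is ${\rm Stab}(x)^{\mathbb{C}}$-invariant and $\bar{\Phi}_{x,\zeta}$ is proper and bounded from below. Since each ${\rm Stab}(x)^{\mathbb{C}}$-orbit is closed, the quotient $L\backslash L^{\mathbb{C}}/{\rm Stab}(x)^{\mathbb{C}}$ is a metric space and $\bar{\Phi}_{x,\zeta}$ is continuous. A continuous, proper, bounded-below function on a metric space attains its infimum: a sublevel set slightly above the infimum is nonempty and compact, so the minimum is achieved at some $[Lg]$. Because the projection $\pi:L\backslash L^{\mathbb{C}}\to L\backslash L^{\mathbb{C}}/{\rm Stab}(x)^{\mathbb{C}}$ is surjective and $\Phi_{x,\zeta}=\bar{\Phi}_{x,\zeta}\circ\pi$, we have $\inf\Phi_{x,\zeta}=\inf\bar{\Phi}_{x,\zeta}=\Phi_{x,\zeta}(Lg)$. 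Thus $Lg$ is a global minimum of the smooth function $\Phi_{x,\zeta}$, hence a critical point.

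For the converse, assume $\Phi_{x,\zeta}$ has a critical point. Existence of a critical point, the property of being ${\rm Stab}(x)^{\mathbb{C}}$-invariant (via ${\rm Stab}(gx)^{\mathbb{C}}=g\,{\rm Stab}(x)^{\mathbb{C}}g^{-1}$ together with $\Phi_{gx,\zeta}=R_g^*\Phi_{x,\zeta}+{\rm const}$), and the properness and boundedness of $\bar{\Phi}_{x,\zeta}$ are all unchanged when $x$ is replaced by $gx$, since $R_g$ is an isometry descending to the quotients. Hence I may assume the critical point is at $L\cdot 1$. By the argument in the proof of Proposition \ref{4.8}, $\Phi_{x,\zeta}$ is then ${\rm Stab}(x)^{\mathbb{C}}$-invariant, $L\cdot 1$ is its global minimum (so $\bar{\Phi}_{x,\zeta}$ is bounded from below), and $\lim_{t\to\infty}\Phi_{x,\zeta}(Le^{\sqrt{-1}t\eta})=\infty$ for every $\eta\in\mathbf{l}-\widetilde{{\rm stab}}(x)$, in particular for every $\eta$ in the unit sphere $S(V_x)$.

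It remains to prove properness of $\bar{\Phi}_{x,\zeta}$, which I expect to be the main obstacle. The strategy is to parametrize the quotient by $V_x$ via Lemma \ref{reduce}: every class is of the form $[Le^{\sqrt{-1}\xi}]$ with $\xi\in V_x$, and $\bar{\Phi}_{x,\zeta}([Le^{\sqrt{-1}\xi}])=\Phi_{x,\zeta}(Le^{\sqrt{-1}\xi})$. For fixed $\eta\in S(V_x)$ the function $r\mapsto\Phi_{x,\zeta}(Le^{\sqrt{-1}r\eta})$ is convex with vanishing derivative at $r=0$ and tends to $\infty$; a compactness argument over $S(V_x)$ — passing to a convergent subsequence of directions and using that convexity propagates an upper bound along a ray back to $r=0$ — upgrades this to $G(r):=\min_{\eta\in S(V_x)}\Phi_{x,\zeta}(Le^{\sqrt{-1}r\eta})\to\infty$ as $r\to\infty$. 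Consequently $\Phi_{x,\zeta}(Le^{\sqrt{-1}\xi})\ge G(\|\xi\|)$, so any sublevel set $\{\bar{\Phi}_{x,\zeta}\le c\}$ is represented by $\xi$ lying in a ball $\{\|\xi\|\le R(c)\}\subset V_x$; its image under the continuous map $\xi\mapsto[Le^{\sqrt{-1}\xi}]$ is compact and contains the sublevel set, which is closed and therefore compact. The delicate point is precisely this uniform coercivity over the compact sphere $S(V_x)$ together with the verification that escaping to infinity in the quotient forces $\|\xi\|\to\infty$; once these are in place, properness, and with it the proposition, follows.
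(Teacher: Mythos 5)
Your proof is correct and follows essentially the same route as the paper: the converse direction via attainment of the infimum of a proper, bounded-below function, and the forward direction by using Lemma \ref{reduce} to identify the quotient with $\exp(V_{gx})/\!\sim$ and reducing properness of $\bar{\Phi}_{x,\zeta}$ to properness of $\Phi_{x,\zeta}|_{\exp(V_{gx})}$. The only difference is that you actually supply the uniform-coercivity argument over $S(V_{gx})$ (convexity propagating bounds back along rays plus compactness of the sphere) that the paper asserts without proof, and that sketch is sound and completable.
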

\begin{proof}
Assume that $\Phi_{x,\zeta}$ has a critical point.
Then the conditions $(i)$-$(ii)$ in Proposition \ref{4.8} are satisfied for 
some $g\in L^{\mathbb{C}}$,
accordingly it suffices to show the properness of $\bar{\Phi}_{gx,\zeta}$.
Define an equivalence relation in ${\rm exp }(V_{gx})$ by $Lg_1\sim
Lg_2$ if $Lg_1$ and $Lg_2$ lie on the same ${\rm Stab}(x)^{\mathbb{C}}$
orbit.
Then the homeomorphism $\bar{\Phi}_{x,\zeta}: {\rm exp }(V_{gx})/\sim \to 
L\backslash L^{\mathbb{C}}/{\rm Stab}(x)^{\mathbb{C}}$ is naturally induced.
Since $\Phi_{x,\zeta}|_{{\rm exp }(V_{gx})}$ is proper, $\bar{\Phi}_{gx,\zeta}$
is also proper.

Conversely, assume that $\Phi_{x,\zeta}$ is ${\rm Stab}(x)^{\mathbb{C}}$-invariant,
and that $\bar{\Phi}_{x,\zeta}$ is proper and bounded from below.
Then the minimizing sequence of $\bar{\Phi}_{x,\zeta}$ always converges,
therefore $\Phi_{x,\zeta}$ has a minimum point.
\end{proof}

\begin{prop}
Assume that $(d\Phi_{x,\zeta})_{L\cdot 1} = 0$.
Let a diffeomorphism $\Psi_L : L\times \mathbf{l} \to L^{\mathbb{C}}$ 
be defined by $\Psi_L(g,\xi) := ge^{\sqrt{-1}\xi}$. 
Then the restriction 
\begin{eqnarray}
\Psi_L|_{{\rm Stab}(x) \times {\rm stab}(x)} 
: {\rm Stab}(x) \times {\rm stab}(x) \to {\rm Stab}(x)^{\mathbb{C}}\nonumber
\end{eqnarray}
is a diffeomorphism.
\label{reductive}
\end{prop}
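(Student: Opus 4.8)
The plan is to first read off what the hypothesis means, dispose of the easy inclusions and of injectivity, and then reduce the surjectivity of the restriction onto $\mathrm{Stab}(x)^{\mathbb{C}}$ to the convexity estimate already used in Proposition \ref{4.8}. Since $(d\Phi_{x,\zeta})_{L\cdot 1}$ sends $\xi_1$ to $\langle m(x)-\zeta,\xi\rangle$, the assumption $(d\Phi_{x,\zeta})_{L\cdot 1}=0$ says precisely that $L\cdot 1$ is a critical point of $\Phi_{x,\zeta}$. By the argument in the proof of Proposition \ref{4.8} (taking $g=1$), $\Phi_{x,\zeta}$ is then $\mathrm{Stab}(x)^{\mathbb{C}}$-invariant and $L\cdot 1$ is a global minimum, so by the normalization $\Phi_{x,\zeta}(L\cdot 1)=0$ is the minimum value. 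The map $\Psi_L:L\times\mathbf{l}\to L^{\mathbb{C}}$ is the polar-decomposition diffeomorphism, so its restriction is automatically injective; and for $g\in\mathrm{Stab}(x)$, $\xi\in\mathrm{stab}(x)$ the element $e^{\sqrt{-1}\xi}$ lies in the complexification of $\mathrm{Stab}(x)$, hence in $\mathrm{Stab}(x)^{\mathbb{C}}$, so that $\Psi_L(\mathrm{Stab}(x)\times\mathrm{stab}(x))\subseteq\mathrm{Stab}(x)^{\mathbb{C}}$. The only real content is therefore surjectivity.

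For surjectivity I would take $\gamma\in\mathrm{Stab}(x)^{\mathbb{C}}$ and write its polar decomposition $\gamma=ge^{\sqrt{-1}\xi}$ with $g\in L$ and $\xi\in\mathbf{l}$. Because $g\in L$ we have $L\gamma=Le^{\sqrt{-1}\xi}$, and the $\mathrm{Stab}(x)^{\mathbb{C}}$-invariance together with the minimum property gives
\[
\Phi_{x,\zeta}(Le^{\sqrt{-1}\xi})=\Phi_{x,\zeta}(L\gamma)=\Phi_{x,\zeta}(L\cdot 1)=0.
\]
Now consider the geodesic $t\mapsto Le^{\sqrt{-1}t\xi}$. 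The function $t\mapsto\Phi_{x,\zeta}(Le^{\sqrt{-1}t\xi})$ is convex, vanishes and is critical at $t=0$, and satisfies $\frac{d^2}{dt^2}\big|_{t=0}\Phi_{x,\zeta}(Le^{\sqrt{-1}t\xi})=\|\xi^*_x\|_{\omega}^2$. If $\xi\notin\mathrm{stab}(x)$ then $\xi^*_x\neq 0$, so this second derivative is strictly positive; the function then strictly increases for $t>0$, forcing $\Phi_{x,\zeta}(Le^{\sqrt{-1}\xi})>0$, contradicting the displayed equality. Hence $\xi^*_x=0$, i.e. $\xi\in\mathrm{stab}(x)$. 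Consequently $e^{\sqrt{-1}\xi}\in\mathrm{Stab}(x)^{\mathbb{C}}$, so $g=\gamma e^{-\sqrt{-1}\xi}\in\mathrm{Stab}(x)^{\mathbb{C}}\cap L=\mathrm{Stab}(x)$, and $\gamma=\Psi_L(g,\xi)$ lies in the image.

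Finally, since $\mathrm{Stab}(x)$ is a closed subgroup of $L$ and $\mathrm{stab}(x)$ a linear subspace of $\mathbf{l}$, the product $\mathrm{Stab}(x)\times\mathrm{stab}(x)$ is a closed embedded submanifold of $L\times\mathbf{l}$; restricting the global diffeomorphism $\Psi_L$ to it yields a diffeomorphism onto its image, which the previous two paragraphs identify with $\mathrm{Stab}(x)^{\mathbb{C}}$. I expect the main obstacle to be exactly this surjectivity step: it is the assertion that the complex stabilizer is reductive, namely the complexification of its maximal compact subgroup $\mathrm{Stab}(x)$, and the only leverage available is the Kempf--Ness/convexity mechanism. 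The delicate point is to be certain that the strict convexity of $\Phi_{x,\zeta}$ in directions $\xi\notin\mathrm{stab}(x)$, that is the positivity $\|\xi^*_x\|_{\omega}^2>0$, is genuinely in force, since this is exactly what excludes any extra polar part of $\gamma$ beyond $\mathrm{stab}(x)$.
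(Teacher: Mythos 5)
Your proposal is correct and follows essentially the same route as the paper: polar-decompose $\gamma\in{\rm Stab}(x)^{\mathbb{C}}$ as $ge^{\sqrt{-1}\xi}$, use the ${\rm Stab}(x)^{\mathbb{C}}$-invariance and minimality at $L\cdot 1$ (from Proposition \ref{4.8}) to force $\Phi_{x,\zeta}$ to be constant along the geodesic $t\mapsto Le^{\sqrt{-1}t\xi}$, and conclude $\|\xi^*_x\|^2=0$, i.e.\ $\xi\in{\rm stab}(x)$ and then $g\in{\rm Stab}(x)^{\mathbb{C}}\cap L={\rm Stab}(x)$. The only differences are cosmetic: you phrase the convexity step contrapositively and spell out the easy inclusions and injectivity that the paper leaves implicit.
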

\begin{proof}
Let $(d\Phi_{x,\zeta})_{L\cdot 1} = 0$.
Take $\gamma\in {\rm Stab}(x)^{\mathbb{C}}$, and put 
$\gamma = ge^{\sqrt{-1}\xi}$ for some $g\in L$ and $\xi\in \mathbf{l}$.
It suffices to show $g\in {\rm Stab}(x)$ and $\xi \in {\rm stab}(x)$.
By the proof of Proposition \ref{4.8}, 
we have $\Phi_{x,\zeta}(L\cdot 1) = \Phi_{x,\zeta}(L\cdot \gamma) = 
\Phi_{x,\zeta}(L\cdot e^{\sqrt{-1}\xi})$.
Since $\Phi_{x,\zeta}$ is geodesically convex and 
$\{L\cdot e^{\sqrt{-1}t\xi}\}_{t\in\mathbb{R}}$ is a geodesic, 
$\Phi_{x,\zeta}$ have to be constant on this geodesic.
Thus we have 
\begin{eqnarray}
0 = \frac{d^2}{dt^2}\Big|_{t=0}\Phi_{x,\zeta}(e^{\sqrt{-1}t\xi}) 
= \|\xi^*_x\|^2,
\nonumber
\end{eqnarray}
which implies $\xi \in {\rm stab}(x)$.
Then we obtain 
\begin{eqnarray}
g=\gamma e^{-\sqrt{-1}t\xi} \in {\rm Stab}(x)^{\mathbb{C}} 
\cap L = {\rm Stab}(x).\nonumber
\end{eqnarray}
\end{proof}

\begin{rem}
{\rm It is shown by Corollary 2.15 in \cite{W} that 
${\rm stab}(x)^{\mathbb{C}}$ are reductive for all $x$ such that 
$(d\Phi_{x,\zeta})_{L\cdot 1} = 0$.}
\end{rem}

Next we assume that there is an $L$-invariant function 
$\varphi\in C^{\infty}(L\backslash L^{\mathbb{C}})$ and satisfies 
$\omega = \sqrt{-1}\partial\bar{\partial}\varphi$.
Then we have
$\Phi_{x,\zeta}(L\cdot e^{\sqrt{-1}\xi}) = 
\varphi(e^{\sqrt{-1}\xi}\cdot x) - \langle \zeta, \xi \rangle + c$ 
for some constant $c$ by the discussion of $(2.6)$ in \cite{HL}. 
Here we may assume $c=0$, since the existence of the critical point does not 
depend on the value of $c$.

\begin{prop}
Assume that $\lim_{t\to \infty}\varphi (e^{\sqrt{-1}t\xi}x)/t = \infty$ holds 
for all $\xi \in \mathbf{l}$ which satisfy 
$\lim_{t\to \infty}\varphi (e^{\sqrt{-1}t\xi}x) = \infty$. 
If $\Phi_{x,\zeta}$ has a critical point, then $\Phi_{x,s\zeta}$ also has 
a critical point for each $s>0$.
\label{perturb}
\end{prop}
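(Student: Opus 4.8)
The plan is to verify, for the parameter $s\zeta$, the two conditions of Proposition \ref{4.8}, which together are equivalent to the existence of a critical point of $\Phi_{x,s\zeta}$. First I would exploit that the property ``$\Phi_{\cdot,\zeta}$ has a critical point'' is $L^{\mathbb{C}}$-invariant, since $R_g^*\Phi_{x,\zeta}-\Phi_{gx,\zeta}$ is constant for every $g\in L^{\mathbb{C}}$, and the same holds with $\zeta$ replaced by $s\zeta$. Hence, replacing $x$ by a suitable point of its $L^{\mathbb{C}}$-orbit, I may assume that the critical point of $\Phi_{x,\zeta}$ sits at $L\cdot 1$, i.e. $(d\Phi_{x,\zeta})_{L\cdot 1}=0$, which by the proof of Proposition \ref{4.8} means $m(x)=\zeta$, that $\Phi_{x,\zeta}$ is ${\rm Stab}(x)^{\mathbb{C}}$-invariant, and that $L\cdot 1$ is a global minimum. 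It then suffices to check conditions $(i)$ and $(ii)$ of Proposition \ref{4.8} for $s\zeta$ with $g=1$.

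For condition $(i)$ I would use the explicit formula $\Phi_{x,s\zeta}(L\cdot e^{\sqrt{-1}\xi})=\varphi(e^{\sqrt{-1}\xi}x)-s\langle\zeta,\xi\rangle$, so that $\Phi_{x,s\zeta}$ and $\Phi_{x,\zeta}$ differ only in the linear term. By Proposition \ref{reductive} every $\gamma\in{\rm Stab}(x)^{\mathbb{C}}$ may be written $\gamma=he^{\sqrt{-1}\eta}$ with $h\in{\rm Stab}(x)$ and $\eta\in{\rm stab}(x)$, whence $L\gamma=L\cdot e^{\sqrt{-1}\eta}$; since $e^{\sqrt{-1}\eta}$ fixes $x$, a direct evaluation gives $R_\gamma^*\Phi_{x,s\zeta}-\Phi_{x,s\zeta}=-s\langle\zeta,\eta\rangle$. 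The ${\rm Stab}(x)^{\mathbb{C}}$-invariance of $\Phi_{x,\zeta}$ forces $\langle\zeta,\eta\rangle=0$ for all $\eta\in{\rm stab}(x)$; as this condition is homogeneous in the parameter, it yields $R_\gamma^*\Phi_{x,s\zeta}=\Phi_{x,s\zeta}$, so $\Phi_{x,s\zeta}$ is ${\rm Stab}(x)^{\mathbb{C}}$-invariant for every $s>0$.

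Condition $(ii)$ is the heart of the argument, and is where the growth hypothesis enters. For $\xi\in\mathbf{l}-\widetilde{{\rm stab}}(x)$ I must show $\Phi_{x,s\zeta}(L\cdot e^{\sqrt{-1}t\xi})=\varphi(e^{\sqrt{-1}t\xi}x)-st\langle\zeta,\xi\rangle\to\infty$ as $t\to\infty$, and I would split according to the behaviour of the convex function $t\mapsto\varphi(e^{\sqrt{-1}t\xi}x)$. If it tends to $\infty$, the hypothesis makes it grow superlinearly, so it dominates the linear term $st\langle\zeta,\xi\rangle$ for every fixed $s>0$ and the sum diverges. If it stays bounded above (and is bounded below along the ray), then condition $(ii)$ for the original $\zeta$, namely $\varphi(e^{\sqrt{-1}t\xi}x)-t\langle\zeta,\xi\rangle\to\infty$, forces $\langle\zeta,\xi\rangle<0$, so that $-st\langle\zeta,\xi\rangle=st|\langle\zeta,\xi\rangle|\to\infty$ dominates and divergence holds again. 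Thus $(ii)$ is satisfied for $s\zeta$ at $g=1$, and Proposition \ref{4.8} produces the desired critical point of $\Phi_{x,s\zeta}$.

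The delicate point, which I expect to be the main obstacle, is precisely the uniform control in $(ii)$: the linear term $st\langle\zeta,\xi\rangle$ scales with $s$, so for small $s$ it offers no help, and one must force divergence from the potential alone. This is exactly what the superlinear growth hypothesis is designed to guarantee — it prevents the linear term from ever overtaking $\varphi$ for any fixed $s$ — while the borderline directions, where $\varphi$ fails to blow up, must be controlled by the sign $\langle\zeta,\xi\rangle<0$ inherited from the critical point of $\Phi_{x,\zeta}$ together with the boundedness below of $\varphi$ along such rays (automatic in the intended applications, where $\varphi=\mathcal{E}\ge 0$). Verifying that these two mechanisms cover every $\xi\in\mathbf{l}-\widetilde{{\rm stab}}(x)$ simultaneously, and ruling out the possibility that $\varphi$ drifts to $-\infty$ and defeats the small linear term, is the step requiring the most care.
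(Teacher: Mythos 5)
Your proposal is correct and follows essentially the same route as the paper: reduce to verifying conditions $(i)$ and $(ii)$ of Proposition \ref{4.8} for the parameter $s\zeta$, get the ${\rm Stab}(x)^{\mathbb{C}}$-invariance from the fact that $\Phi_{x,s\zeta}$ and $\Phi_{x,\zeta}$ differ only in the linear term, and split the divergence check according to whether $\varphi(e^{\sqrt{-1}t\xi}x)$ blows up (superlinear growth hypothesis) or stays bounded above (sign of $\langle\zeta,\xi\rangle$). The only cosmetic difference is that the paper checks $(i)$ via the identity $\Phi_{x,s\zeta}=s\Phi_{x,\zeta}+(1-s)\varphi$ together with $\varphi(g\gamma x)=\varphi(gx)$ rather than via Proposition \ref{reductive}, and it relies on the same implicit boundedness below of $\varphi$ along non-divergent rays that you rightly flag as the delicate point.
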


\begin{proof}
We may assume the conditions $(i)(ii)$ of Proposition \ref{4.8} 
are satisfied for $\Phi_{x,\zeta}$.
It suffices to show that $\Phi_{x,s\zeta}$ also satisfies $(i)(ii)$.
Since $\Phi_{x,\zeta}$ is ${\rm Stab}(x)^{\mathbb{C}}$ invariant, we have
\begin{eqnarray}
\Phi_{x,s\zeta}(Lg\gamma) &=& s\Phi_{x,\zeta}(Lg\gamma) 
+ (1 -s)\varphi (g\gamma x) \nonumber\\
&=& s\Phi_{x,\zeta}(Lg) + (1 -s)\varphi (gx)
= \Phi_{x,s\zeta}(Lg)\nonumber
\end{eqnarray}
for all $Lg\in L\backslash L^{\mathbb{C}}$ and $\gamma\in
{\rm Stab}(x)^{\mathbb{C}}$, thus $\Phi_{x,s\zeta}$ is 
${\rm Stab}(x)^{\mathbb{C}}$ invariant.
Next we take $\xi\in \mathbf{l} - \widetilde{{\rm stab}}(x)$ and consider the
behavior of $\Phi_{x,s\zeta}(e^{\sqrt{-1}t\xi})$ for $t\to\infty$.
Since $\lim_{t\to \infty}\Phi_{x,\zeta}(e^{\sqrt{-1}t\xi}) = \infty$,
we have $\lim_{t\to \infty}\varphi (e^{\sqrt{-1}t\xi}) = \infty$ or
$-\langle \zeta, \xi\rangle >0$. 
If the latter case occurs, then we obtain $\lim_{t\to \infty}
\Phi_{x,s\zeta}(e^{\sqrt{-1}t\xi}) = \infty$.
Let the former case occur.
From the assumption we have $\varphi (e^{\sqrt{-1}t\xi}x)/t \to \infty$
for $t\to \infty$, 
thus $\lim_{t\to \infty}\Phi_{x,s\zeta}(e^{\sqrt{-1}t\xi}) = \infty$ for 
all $s>0$.
\end{proof}

\section{Main results}\label{mainresult}
\subsection{Correspondence of orbits}\label{corresp}
In this section we prove Theorem \ref{1.1}.
Let $(M,g,I_1,I_2,I_3)$, $H,G,\rho$ and $H_{\rho}$ be as 
in Section \ref{sec1.2}.
First of all, we show the following lemma.

\begin{lem}
Let $\bar{\rho}$ be defined as (\ref{rho1}), and assume it is surjective.
Then a linear map
\begin{eqnarray}
\rho^*|_{{\rm Ann.}\Delta_{\mathbf{g}}} : {\rm Ann.}\Delta_{\mathbf{g}} 
\to {\rm Ann.} \hcapg \nonumber
\end{eqnarray}
is bijective, where 
\begin{eqnarray}
{\rm Ann.}\Delta_{\mathbf{g}} &:=& \{\varphi \in \mathbf{g}^* \oplus 
\mathbf{g}^*;\ \varphi|_{\Delta_{\mathbf{g}}} = 0 \},\nonumber\\
{\rm Ann.} \hcapg &:=& \{\varphi \in 
\mathbf{h}^*;\ \varphi|_{\hcapg} = 0 \}.\nonumber
\end{eqnarray}
\label{linearalg}
\end{lem}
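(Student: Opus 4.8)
The plan is to reduce the lemma to a single short exact sequence of Lie algebras and then to pure duality. Write $\phi := d\rho : \mathbf{h}\to \mathbf{g}\oplus\mathbf{g}$ for the differential of $\rho$, with components $\phi = (\phi_0,\phi_1)$, and set $\delta := \phi_0 - \phi_1 : \mathbf{h}\to\mathbf{g}$. Since $\hcapg = {\rm Lie}(\rho^{-1}(\Delta_G)) = \phi^{-1}(\Delta_{\mathbf{g}})$, one has $Y\in\hcapg$ iff $\phi_0(Y)=\phi_1(Y)$, i.e. $\ker\delta = \hcapg$. The whole claim will follow once I establish that $\delta$ is surjective, for then the sequence
\begin{eqnarray}
0\to \hcapg \to \mathbf{h} \stackrel{\delta}{\to} \mathbf{g}\to 0\nonumber
\end{eqnarray}
is exact, and the lemma becomes a statement about its transpose.

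The first and main step is to extract surjectivity of $\delta$ from the hypothesis that $\bar\rho$ is surjective. I would compute the differential of $\bar\rho$ at the base point: under the identification of tangent spaces, $d\bar\rho$ is the map $\mathbf{h}^{\mathbb{C}}/\hcapgc \to (\mathbf{g}^{\mathbb{C}}\oplus\mathbf{g}^{\mathbb{C}})/\Delta_{\mathbf{g}^{\mathbb{C}}}$ induced by $\phi^{\mathbb{C}}$, and composing with the isomorphism $(\mathbf{g}^{\mathbb{C}}\oplus\mathbf{g}^{\mathbb{C}})/\Delta_{\mathbf{g}^{\mathbb{C}}}\cong\mathbf{g}^{\mathbb{C}}$, $(A,B)\mapsto A-B$, this becomes $Y\mapsto \delta^{\mathbb{C}}(Y)=\phi_0^{\mathbb{C}}(Y)-\phi_1^{\mathbb{C}}(Y)$. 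The map $\bar\rho$ is the orbit map for the $H^{\mathbb{C}}$-action on the target through the identity coset, hence it is $H^{\mathbb{C}}$-equivariant and its differential has constant rank. A surjective smooth map admits a regular value in its image (the regular values are dense), so $d\bar\rho$ is surjective at some point, and by the constant-rank property it is then surjective at the base point. Therefore $\delta^{\mathbb{C}}:\mathbf{h}^{\mathbb{C}}\to\mathbf{g}^{\mathbb{C}}$ is surjective; since $\delta$ is a real map whose complexification is $\delta^{\mathbb{C}}$, surjectivity descends to $\delta:\mathbf{h}\to\mathbf{g}$.

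The remaining step is linear algebra. The annihilator ${\rm Ann.}\Delta_{\mathbf{g}}$ consists of the pairs $(\psi,-\psi)$ with $\psi\in\mathbf{g}^*$, giving an identification ${\rm Ann.}\Delta_{\mathbf{g}}\cong\mathbf{g}^*$. Under it, $\rho^*=\phi^*$ sends $(\psi,-\psi)$ to $Y\mapsto\psi(\phi_0Y)-\psi(\phi_1Y)=\psi(\delta Y)$, so $\rho^*|_{{\rm Ann.}\Delta_{\mathbf{g}}}$ is precisely the transpose $\delta^* :\mathbf{g}^*\to\mathbf{h}^*$. Because $\delta$ is surjective, $\delta^*$ is injective, and its image is the annihilator of $\ker\delta=\hcapg$, i.e. exactly ${\rm Ann.}\hcapg$. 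Hence $\rho^*|_{{\rm Ann.}\Delta_{\mathbf{g}}}$ is a bijection onto ${\rm Ann.}\hcapg$, as claimed.

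I expect the main obstacle to be the passage in the second paragraph from the geometric surjectivity of $\bar\rho$ to the infinitesimal surjectivity of $\delta$; once the constant-rank/Sard argument is set up cleanly, the rest is bookkeeping. (A dimension count on the $H^{\mathbb{C}}$-orbit gives the same conclusion: surjectivity forces $\dim_{\mathbb{C}}\mathbf{h}^{\mathbb{C}}-\dim_{\mathbb{C}}\hcapgc=\dim_{\mathbb{C}}\mathbf{g}^{\mathbb{C}}$, which is the rank condition for $\delta^{\mathbb{C}}$.)
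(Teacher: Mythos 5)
Your proof is correct and follows essentially the same route as the paper's: the paper simply observes that $\rho^*|_{{\rm Ann.}\Delta_{\mathbf{g}}}$ is the adjoint of the induced map $\bar{\rho}_*:\mathbf{h}/\hcapg\to(\mathbf{g}\oplus\mathbf{g})/\Delta_{\mathbf{g}}$ and calls the bijectivity obvious, while you add the identification $(\mathbf{g}\oplus\mathbf{g})/\Delta_{\mathbf{g}}\cong\mathbf{g}$ to work with $\delta=\phi_0-\phi_1$ and spell out the two implicit points (that $\ker\delta=\hcapg$ gives injectivity of $\bar{\rho}_*$, and that surjectivity of $\bar{\rho}$ forces surjectivity of its constant-rank differential). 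No gaps; it is the paper's argument with the details supplied.
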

\begin{proof}
The assertion is obvious since $\rho^*|_{{\rm Ann.}\Delta_{\mathbf{g}}}$
is the adjoint map of
\begin{eqnarray}
\bar{\rho}_*: \mathbf{h}/ \hcapg \to (\mathbf{g} \oplus 
\mathbf{g}) / \Delta_{\mathbf{g}},\nonumber
\end{eqnarray}
under the identification $\{ \mathbf{h}/ \hcapg \}^* \cong 
{\rm Ann.}\hcapg$ and $\{ (\mathbf{g} \oplus \mathbf{g}) / 
\Delta_{\mathbf{g}} \}^* \cong {\rm Ann.}\Delta_{\mathbf{g}}$.
\end{proof}

Any $x\in \mu_{\mathbb{C}}^{-1} (\iota^*\zeta_{\mathbb{C}})$
satisfies $\hat{\mu}_{\mathbb{C}}(x) - \zeta_{\mathbb{C}} \in
\ker \iota^* = {\rm Ann.}\hcapgc$ by the definition.
Consequently, there exists a unique $\eta (x) \in 
( \mathbf{g}^{\mathbb{C}} )^*$ such that
\begin{eqnarray}
\hat{\mu}_{\mathbb{C}}(x) - \zeta_{\mathbb{C}} 
= \rho^*(-\eta(x),\eta(x))\nonumber
\end{eqnarray}
by Lemma \ref{linearalg},
which implies $(x,1,\eta(x)) \in 
\sigma_\mathbb{C}^{-1}(\zeta_{\mathbb{C}})$.
Here we identify $N_G = G^{\mathbb{C}}\times \mathbf{g}^{\mathbb{C}}$ and
$( \mathbf{g}^{\mathbb{C}} )^* = \mathbf{g}^{\mathbb{C}} $ by the
${\rm Ad}_G$-invariant $\mathbb{C}$ bilinear form.
Thus we obtain a map $\hat{\psi} : \mu_{\mathbb{C}}^{-1} (\iota^*\zeta_{\mathbb{C}}) \to 
\sigma_\mathbb{C}^{-1}(\zeta_{\mathbb{C}})$ defined by
$\hat{\psi}(x) := (x,1,\eta(x))$, which induces a map
$\psi : \mu_{\mathbb{C}}^{-1} (\iota^*\zeta_{\mathbb{C}}) 
/ \Hcapgc \to \sigma_\mathbb{C}^{-1}(\zeta_{\mathbb{C}}) 
/ H^{\mathbb{C}}$.

\begin{prop}
$\psi$ is well-defined and a homeomorphism.\label{4.4}
\end{prop}
\begin{proof}
First of all we check the well-definedness.
Let $x\in \mu_{\mathbb{C}}^{-1} (\iota^*\zeta_{\mathbb{C}}) $
and $h \in \Hcapgc $. 
Now we may write $\rho (h) = (h_0,h_1) \in G^{\mathbb{C}}\times G^{\mathbb{C}}$,
and suppose $h_0 = h_1$.
Then $hx\in \mu_{\mathbb{C}}^{-1} (\iota^*\zeta_{\mathbb{C}})$
and $\hat{\psi}(hx) = (hx,1,\eta(hx))$, where $\eta(hx)\in \mathbf{g}^{\mathbb{C}} $
 is uniquely defined by $\hat{\mu}_{\mathbb{C}}(hx) - \zeta_{\mathbb{C}} 
= \rho^*(-\eta(hx),\eta(hx))$.
Now we have
\begin{eqnarray}
\hat{\mu}_{\mathbb{C}}(hx) - \zeta_{\mathbb{C}} 
&=& {\rm Ad}^*_{h^{-1}}(\hat{\mu}_{\mathbb{C}}(x) 
- \zeta_{\mathbb{C}} )\nonumber\\
&=& {\rm Ad}^*_{h^{-1}}\rho^*(-\eta(x),\eta(x))\nonumber\\
&=& \rho^*( - {\rm Ad}^*_{h_0^{-1}}\eta(x),
{\rm Ad}^*_{h_0^{-1}}\eta(x)),\nonumber
\end{eqnarray}
where ${\rm Ad}_g^*\in GL( (\mathbf{g}^{\mathbb{C}})^*)$ is defined by
\begin{eqnarray}
\langle {\rm Ad}^*_g y,\xi \rangle 
:= \langle y,{\rm Ad}_g\xi \rangle\nonumber
\end{eqnarray}
for $y\in (\mathbf{g}^{\mathbb{C}})^*,\xi\in \mathbf{g}^{\mathbb{C}}, 
g\in G^{\mathbb{C}}$,
hence $\eta(hx) = {\rm Ad}^*_{h_0^{-1}}\eta(x)$ holds
by the uniqueness.
Since ${\rm Ad}^*_{h_0^{-1}}$ corresponds to ${\rm Ad}_{h_0}$ under the
identification $(\mathbf{g}^{\mathbb{C}})^* \cong \mathbf{g}^{\mathbb{C}}$,
we obtain $\hat{\psi}(hx) = h\hat{\psi}(x)$.

Next we show the injectivity. Take $x,x'\in \mu_{\mathbb{C}}^{-1} 
(\iota^*\zeta_{\mathbb{C}}) $ to be $\hat{\psi}(x) = h\hat{\psi}(x')$ for some 
$h\in H^{\mathbb{C}}$.
Then $(x,1,\eta(x)) = (hx', h_0h_1^{-1},{\rm Ad}_{h_0} \eta(x'))$, accordingly
we obtain $h_0h_1^{-1} = 1$ which implies $h\in \Hcapgc$.

The surjectivity is shown by constructing the inverse map of $\psi$
as follows.
Take $(x,Q,\eta) \in \sigma_\mathbb{C}^{-1}(\zeta_{\mathbb{C}}) $
arbitrarily. 
From Section \ref{3.2}, we have
\begin{eqnarray}
\sigma_{\mathbb{C}} (x,Q,\eta) &=& \hat{\mu}_{\mathbb{C}} (x)
 + \rho^*(\nu (Q,\eta))\nonumber\\
&=& \hat{\mu}_{\mathbb{C}} (x) + 
\rho^*(\eta, -{\rm Ad}_{Q^{-1}}\eta) = \zeta_{\mathbb{C}}\label{eq4.1.1}
\end{eqnarray}
From the surjectivity of the map (\ref{rho1}), there exist some 
$h\in H^{\mathbb{C}}$ such that $h_0h_1^{-1} = Q$.
Then $\rho(h)^{-1} = (h_0^{-1}, h_0^{-1}Q) \in H^{\mathbb{C}}$,
and we have
\begin{eqnarray}
\rho^*(\eta, -{\rm Ad}_{Q^{-1}}\eta) 
&=& \rho^*{\rm Ad}_{\rho(h)}({\rm Ad}_{h_0^{-1}}\eta, 
-{\rm Ad}_{h_0^{-1}}\eta) \nonumber\\
&=& {\rm Ad}_h\rho^*({\rm Ad}_{h_0^{-1}}\eta, 
-{\rm Ad}_{h_0^{-1}}\eta).\label{eq4.1.2}
\end{eqnarray}
By combining (\ref{eq4.1.1})(\ref{eq4.1.2}), we obtain
\begin{eqnarray}
\hat{\mu}_{\mathbb{C}} (h^{-1}x) + 
\rho^*({\rm Ad}_{h_0^{-1}}\eta, -{\rm Ad}_{h_0^{-1}}\eta) 
= \zeta_{\mathbb{C}},\nonumber
\end{eqnarray}
which means $h^{-1}x \in \mu_{\mathbb{C}}^{-1} (\iota^*\zeta_{\mathbb{C}})$
and 
\begin{eqnarray}
\hat{\psi}(h^{-1}x) = (h^{-1}x,1,{\rm Ad}_{h_0^{-1}}\eta) 
= h^{-1}(x,Q,\eta).\nonumber
\end{eqnarray}
Thus we have the surjectivity of $\psi$.
Here, we can take $h$ depending on $Q$ continuously in local,
therefore the inverse of $\psi$ becomes continuous.
\end{proof}

We can give group isomorphisms between the stabilizers as follows.
Let
\begin{eqnarray}
{\rm Stab}(x)^{\mathbb{C}} &:=& \{g\in \Hcapgc;\ gx = x\},\nonumber\\
{\rm Stab}(x,Q,\eta)^{\mathbb{C}} &:=& \{g\in H^{\mathbb{C}};\ 
g(x,Q,\eta) = (x,Q,\eta)\}.\nonumber
\end{eqnarray}
Then it is easy to check that the inclusion 
${\rm Stab}(x)^{\mathbb{C}} \hookrightarrow {\rm Stab}(\hat{\psi}(x))^{\mathbb{C}}$
is surjective, hence we obtain a Lie group isomorphism
\begin{eqnarray}
{\rm Stab}(x)^{\mathbb{C}} \cong {\rm Stab}(\hat{\psi}(x))^{\mathbb{C}}.\label{stabc}
\end{eqnarray}

\subsection{Correspondence of stability}\label{stability}
Put
\begin{eqnarray}
\mu_{\mathbb{C}}^{-1}(\iota^*\zeta_{\mathbb{C}})_{\iota^*\zeta_1} &=& 
\{ x\in \mu_{\mathbb{C}}^{-1}(\iota^*\zeta_{\mathbb{C}});\ 
\Phi_{x,\iota^*\zeta_1}\ {\rm has\ a\ critical\ point} \},\nonumber\\
\sigma_\mathbb{C}^{-1}(\zeta_{\mathbb{C}})_{\zeta_1} &=& 
\{ (x,Q,\eta)\in \sigma_\mathbb{C}^{-1}(\zeta_{\mathbb{C}});\ 
\Phi_{(x,Q,\eta),\zeta_1}\ {\rm has\ a\ critical\ point} \}.\nonumber
\end{eqnarray}
In this subsection we prove that $\psi$ is a bijection from 
$\mu_{\mathbb{C}}^{-1}
(\iota^*\zeta_{\mathbb{C}})_{\iota^*\zeta_1} / \Hcapgc$ to 
$\sigma_\mathbb{C}^{-1}(\zeta_{\mathbb{C}})_{\zeta_1} 
/ H^{\mathbb{C}}$
by using the results in Section \ref{hamilton}.

Along Section \ref{hamilton}, we define geodesically convex functions 
\begin{eqnarray}
\Phi_{x,\zeta_1} : H \backslash H^{\mathbb{C}} \to\mathbb{R},
\quad \ 
\Phi_{x,\iota^*\zeta_1} : \Hcapg\backslash \Hcapgc 
\to\mathbb{R},
\quad \ \Phi_{(x,Q,\eta),\zeta_1} : H \backslash H^{\mathbb{C}} 
\to\mathbb{R},\nonumber
\end{eqnarray}
for $x\in M$ and $(x,Q,\eta) \in M\times N_G$, 
corresponding to the moment maps ${m} = \hat{\mu}_1,\mu_1,\sigma_1$, 
respectively.
Since $\Hcapg$ is a closed subgroup of $H$, $\Hcapg\backslash \Hcapgc$ is 
naturally embedded in $H \backslash H^{\mathbb{C}}$. 
Then we have $\Phi_{x,\iota^*\zeta_1} (\Hcapg h) = 
\Phi_{x,\zeta_1} (H h)$ for all $h\in \Hcapgc$.
Moreover we may write $\Phi_{(x,Q,\eta),\zeta_1}(H \hat{h}) 
= \Phi_{x,\zeta_1} (H \hat{h}) 
+ \mathcal{E}(\hat{h}_0Q\hat{h}_1^{-1},{\rm Ad}_{\hat{h}_0}\eta)$ 
for all $\hat{h} \in H^{\mathbb{C}}$ from Proposition \ref{potential2} and 
$(2.6)$ in \cite{HL}.

\begin{prop}
Let $x\in \mu_{\mathbb{C}}^{-1}(\iota^*\zeta_{\mathbb{C}})_{\iota^*\zeta_1}$.
Then $\hat{\psi}(x)\in \sigma_\mathbb{C}^{-1}(\zeta_{\mathbb{C}})_{\zeta_1}$.
\label{psi}
\end{prop}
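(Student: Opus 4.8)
The plan is to show that $\Phi_{\hat\psi(x),\zeta_1}$ acquires a critical point by verifying conditions $(i)$--$(ii)$ of Proposition \ref{4.8} with $g=1$. First I would reduce to the case where the given critical point of $\Phi_{x,\iota^*\zeta_1}$ sits at the base point $\Hcapg\cdot 1$. Since $\hat\psi$ is $\Hcapgc$-equivariant (Proposition \ref{4.4}) and the existence of a critical point is invariant under the right $H^{\mathbb{C}}$-action (because $R_g^*\Phi_{y,\zeta}-\Phi_{gy,\zeta}$ is constant), replacing $x$ by $g_0 x$ for a suitable $g_0\in\Hcapgc$ lets me assume $(d\Phi_{x,\iota^*\zeta_1})_{\Hcapg\cdot 1}=0$; this is legitimate because $\mu_{\mathbb{C}}^{-1}(\iota^*\zeta_{\mathbb{C}})$ is $\Hcapgc$-invariant. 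Throughout I would work from the decomposition $\Phi_{(x,1,\eta(x)),\zeta_1}(H\hat h)=\Phi_{x,\zeta_1}(H\hat h)+\mathcal{E}(\hat h_0\hat h_1^{-1},{\rm Ad}_{\hat h_0}\eta(x))$ recorded before the statement, where writing $\rho_*(\xi)=(\xi_0,\xi_1)$ and $\hat h=e^{\sqrt{-1}\xi}$ gives $\hat h_0=e^{\sqrt{-1}\xi_0}$, $\hat h_1=e^{\sqrt{-1}\xi_1}$.

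For condition $(i)$ I would invoke the stabilizer isomorphism (\ref{stabc}), which identifies ${\rm Stab}(\hat\psi(x))^{\mathbb{C}}$ with ${\rm Stab}(x)^{\mathbb{C}}\subset\Hcapgc$. For $\gamma$ in this group one has $\gamma_0=\gamma_1$ and ${\rm Ad}_{\gamma_0}\eta(x)=\eta(x)$, so a direct substitution shows the $\mathcal{E}$-term in the displayed decomposition is right-$\gamma$-invariant. The remaining term changes by the constant $c_\gamma=\Phi_{x,\zeta_1}(H\gamma)-\Phi_{x,\zeta_1}(H\cdot 1)=\Phi_{x,\iota^*\zeta_1}(\Hcapg\gamma)$, which vanishes because $\Phi_{x,\iota^*\zeta_1}$ is ${\rm Stab}(x)^{\mathbb{C}}$-invariant (Proposition \ref{4.8}, using the reduction above). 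Hence $\Phi_{\hat\psi(x),\zeta_1}$ is ${\rm Stab}(\hat\psi(x))^{\mathbb{C}}$-invariant.

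For condition $(ii)$ I would split $\xi\in\mathbf{h}-\widetilde{\rm stab}(\hat\psi(x))$ according to whether $\rho_*(\xi)\in\Delta_{\mathbf{g}}$, i.e. whether $\xi\in\hcapg$. If $\xi\notin\hcapg$, then $\xi_0\neq\xi_1$ and the points $e^{2\sqrt{-1}t\xi_0},e^{2\sqrt{-1}t\xi_1}$ lie on distinct geodesic rays of the nonpositively curved symmetric space $G\backslash G^{\mathbb{C}}$; by Proposition \ref{kahlerpotential} and the energy-minimizing property, $\mathcal{E}(e^{\sqrt{-1}t\xi_0}e^{-\sqrt{-1}t\xi_1},\cdot)\ge \tfrac12\,{\rm dist}(e^{2\sqrt{-1}t\xi_0},e^{2\sqrt{-1}t\xi_1})^2$ grows quadratically in $t$, while $\Phi_{x,\zeta_1}(He^{\sqrt{-1}t\xi})$ is convex and hence bounded below by a linear function, so the sum tends to $\infty$. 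If instead $\xi\in\hcapg$, then $\hat h_0\hat h_1^{-1}=1$, the $\mathcal{E}$-term is nonnegative, and $\Phi_{\hat\psi(x),\zeta_1}(He^{\sqrt{-1}t\xi})\ge\Phi_{x,\iota^*\zeta_1}(\Hcapg e^{\sqrt{-1}t\xi})$; since $\widetilde{\rm stab}(\hat\psi(x))=\widetilde{\rm stab}(x)\subset\hcapg$, the hypothesis gives $\xi\in\hcapg-\widetilde{\rm stab}(x)$, and condition $(ii)$ for $\Phi_{x,\iota^*\zeta_1}$ forces the right-hand side, hence the sum, to $\infty$. Proposition \ref{4.8} then produces a critical point of $\Phi_{\hat\psi(x),\zeta_1}$, i.e. $\hat\psi(x)\in\sigma_\mathbb{C}^{-1}(\zeta_{\mathbb{C}})_{\zeta_1}$.

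The main obstacle will be the quadratic lower bound on $\mathcal{E}$ in the transverse case: one must extract from the K\"ahler-potential formula of Proposition \ref{kahlerpotential} that the energy term genuinely diverges, controlling it below by the squared distance between the two cosets in $G\backslash G^{\mathbb{C}}$ and using nonpositive curvature to see that this distance grows at least linearly when $\xi_0\ne\xi_1$. By contrast, the verification of $(i)$ and of the $\xi\in\hcapg$ part of $(ii)$ is essentially bookkeeping with the displayed decomposition together with the reductions already in place.
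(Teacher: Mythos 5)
Your proposal is correct and follows essentially the same route as the paper's proof: the same splitting into the cases $\xi_0\neq\xi_1$ and $\xi\in\hcapg$, the same linear lower bound on $\Phi_{x,\zeta_1}$ from geodesic convexity, the same quadratic growth of the $\mathcal{E}$-term via the squared distance in the Hadamard manifold $G\backslash G^{\mathbb{C}}$, and the same appeal to Proposition \ref{4.8}. Your verification of condition $(i)$ is merely spelled out in more detail than the paper's one-line assertion.
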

\begin{proof}
It suffices to show that $\Phi_{\hat{\psi}(x),\zeta_1}$ has a critical point if
$\Phi_{x,\iota^*\zeta_1}$ has a critical point.

First of all, it is easy to check that 
$\Phi_{\hat{\psi}(x),\zeta_1}$ is ${\rm Stab}(\hat{\psi}(x))^{\mathbb{C}}$ invariant, since 
${\rm Stab}(x)^{\mathbb{C}} = {\rm Stab}(\hat{\psi}(x))^{\mathbb{C}}$ and
$\Phi_{x,\iota^*\zeta_1}$ is 
${\rm Stab}(x)^{\mathbb{C}}$ invariant.

Next we take $\hat{\xi} \in \mathbf{h}$, put $\rho_*(\hat{\xi}) 
= (\hat{\xi}_0, \hat{\xi}_1)$ and consider the behavior of
$\Phi_{\hat{\psi}(x),\zeta_1}(He^{\sqrt{-1}t\hat{\xi}})$ for $t\to \infty$.
Since $\Phi_{x,\zeta_1}$ is geodesically convex, there is a constant 
$c_{\hat{\xi}}\mathbb{R}$ and $\liminf_{t\to +\infty}\frac{d}{dt}
\Phi_{x,\zeta_1} (He^{\sqrt{-1}t\hat{\xi}}) \ge c_{\hat{\xi}}$. 
Then we have an inequality
$\Phi_{x,\zeta_1} (He^{\sqrt{-1}t\hat{\xi}}) \ge c_{\hat{\xi}} t - N_1$
for all $t\in\mathbb{R}$, for some sufficiently large $N_1$.
If $\hat{\xi}_0 \neq \hat{\xi}_1$, then
\begin{eqnarray}
\Phi_{\hat{\psi}(x),\zeta_1}(He^{\sqrt{-1}t\hat{\xi}}) &\ge& \mathcal{E}
(e^{\sqrt{-1}t\hat{\xi}_0}
e^{ - \sqrt{-1}t\hat{\xi}_1}, {\rm Ad}_{e^{\sqrt{-1}t\hat{\xi}_0}}\eta(x))
+ c_{\hat{\xi}} t - N_1\nonumber\\
&\ge& \min_{h\in P(e^{2\sqrt{-1}t\hat{\xi}_0}, e^{ - 2\sqrt{-1}t\hat{\xi}_1})} 
\int_0^1 \frac{1}{4}\| h' \|_h^2 
+ c_{\hat{\xi}} t - N_1\nonumber\\
&\ge& {\rm dist}_{G\backslash G^{\mathbb{C}}} 
(e^{2\sqrt{-1}t\hat{\xi}_0}, e^{ 2\sqrt{-1}t\hat{\xi}_1} )^2
+ c_{\hat{\xi}} t - N_1\nonumber
\end{eqnarray}
Now $G\backslash G^{\mathbb{C}}$ is an Hadamard manifold, therefore
the function 
\begin{eqnarray}
t\mapsto {\rm dist}_{G\backslash G^{\mathbb{C}}} 
(e^{2\sqrt{-1}t\hat{\xi}_0}, e^{ 2\sqrt{-1}t\hat{\xi}_1} )\nonumber
\end{eqnarray}
is convex.
Since $\hat{\xi}_0 \neq \hat{\xi}_1$, there exists a positive constant $N_2>0$ 
and
\begin{eqnarray}
{\rm dist}_{G\backslash G^{\mathbb{C}}} 
(e^{2\sqrt{-1}t\hat{\xi}_0}, e^{ 2\sqrt{-1}t\hat{\xi}_1} )^2 \ge N_2t^2\nonumber
\end{eqnarray}
for $t\ge 1$, and we obtain $\Phi_{\hat{\psi}(x),\zeta_1}(He^{\sqrt{-1}t\hat{\xi}}) 
\to \infty$ for $t\to \infty$.

If $\hat{\xi}_0 = \hat{\xi}_1$, then $\hat{\xi} \in \hcapg$.
In this case we have
\begin{eqnarray}
\Phi_{\hat{\psi}(x),\zeta_1}(He^{\sqrt{-1}t\hat{\xi}}) \ge \Phi_{x,\iota^*\zeta_1}
(\Hcapg e^{\sqrt{-1}t\hat{\xi}}) \to \infty\nonumber
\end{eqnarray}
for $t\to \infty$, if we take $\hat{\xi} \notin \widetilde{{\rm stab}}(\hat{\psi}(x))
= \widetilde{{\rm stab}}(x)$, where the $\widetilde{{\rm stab}}$ is defined in 
the next section. 
Thus $\Phi_{\hat{\psi}(x),\zeta_1}$ has a critical value by Proposition \ref{4.8}.
\end{proof}

Next we show the converse correspondence.
From now on, we assume that there is an $H$-invariant global K\"ahler potential
$\varphi: M\to \mathbb{R}$ of $(M,I_1,\omega_1)$, 
then we have 
\begin{eqnarray}
\Phi_{x,\iota^*\zeta_1} (\Hcapg e^{\sqrt{-1}\xi}) 
&=& \varphi(e^{\sqrt{-1}\xi} x)
- \langle \iota^*\zeta_1, \xi\rangle + const.\ ,\nonumber\\
\Phi_{(x,Q,\eta),\zeta_1}(He^{\sqrt{-1}\hat{\xi}}) 
&=& \varphi(e^{\sqrt{-1}\hat{\xi}} x) + \mathcal{E}(Q,\eta)
- \langle \zeta_1, \hat{\xi} \rangle + const.\ ,\nonumber
\end{eqnarray}
where $\xi\in \hcapg$ and $\hat{\xi} \in \mathbf{h}$.
Here we may assume the constant in the right hand sides of equalities 
are equal to $0$.

\begin{prop}
Assume that there exists a smooth function $q:\mathbb{R} \to \mathbb{R}$
such that $\|\eta(x) \|^2 \le q(\varphi(x))$ and $q'(\varphi(x))\ge 0$ 
for any $x\in M$. 
Suppose that
if $\varphi(e^{\sqrt{-1}t\xi}\cdot x) \to \infty$ for $t\to \infty$ then 
$\lim_{t\to \infty}\varphi(e^{\sqrt{-1}t\xi}\cdot x)/t = \infty$ for any 
$\xi \in \hcapg$.
If $\hat{\psi}(x)\in \sigma_\mathbb{C}^{-1}(\zeta_{\mathbb{C}})_{\zeta_1}$, then
$x\in \mu_{\mathbb{C}}^{-1}(\iota^*\zeta_{\mathbb{C}})_{\iota^*\zeta_1}$.
\label{converse}
\end{prop}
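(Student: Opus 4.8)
The plan is to prove directly that $\Phi_{x,\iota^*\zeta_1}$ admits a critical point by verifying conditions $(i)$ and $(ii)$ of Proposition \ref{4.8} at the base point $g=1$, where I write $S:={\rm Stab}(x)^\mathbb{C}\subset\Hcapgc$ and use the isomorphism (\ref{stabc}) to identify $S$ with ${\rm Stab}(\hat\psi(x))^\mathbb{C}$. The tool used throughout is the identity $\Phi_{\hat\psi(x),\zeta_1}(Hh)=\Phi_{x,\iota^*\zeta_1}(\Hcapg h)+\mathcal{E}(1,{\rm Ad}_{h_0}\eta(x))$ for $h\in\Hcapgc$, which follows from the displayed formula for $\Phi_{(x,Q,\eta),\zeta_1}$ with $(Q,\eta)=(1,\eta(x))$ together with $\Phi_{x,\iota^*\zeta_1}(\Hcapg h)=\Phi_{x,\zeta_1}(Hh)$. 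Since $\hat\psi(x)\in\sigma_\mathbb{C}^{-1}(\zeta_\mathbb{C})_{\zeta_1}$, Proposition \ref{proper} gives at the outset that $\Phi_{\hat\psi(x),\zeta_1}$ is $S$-invariant and that $\bar{\Phi}_{\hat\psi(x),\zeta_1}$ is proper and bounded from below.

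First I would establish $(i)$. For $\gamma\in S$ one has ${\rm Ad}_{\gamma_0}\eta(x)=\eta(x)$, because $\gamma$ fixes the third component of $\hat\psi(x)=(x,1,\eta(x))$; hence the correction term $\mathcal{E}(1,{\rm Ad}_{h_0}\eta(x))$ in the identity above is right-$S$-invariant in $h$. Combined with the $S$-invariance of $\Phi_{\hat\psi(x),\zeta_1}$, this yields the $S$-invariance of $\Phi_{x,\iota^*\zeta_1}$, which is condition $(i)$.

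The core is condition $(ii)$: for every $\xi\in\hcapg-\widetilde{{\rm stab}}(x)$ I must show $\lim_{t\to\infty}\Phi_{x,\iota^*\zeta_1}(\Hcapg e^{\sqrt{-1}t\xi})=\infty$. Since $\widetilde{{\rm stab}}(\hat\psi(x))=\widetilde{{\rm stab}}(x)$ by (\ref{stabc}), such a $\xi$ is a divergent direction for the proper function $\bar{\Phi}_{\hat\psi(x),\zeta_1}$, so $\Phi_{\hat\psi(x),\zeta_1}(He^{\sqrt{-1}t\xi})\to\infty$. Writing $V(t)=\varphi(e^{\sqrt{-1}t\xi}x)$, $\ell=\langle\iota^*\zeta_1,\xi\rangle=\langle\zeta_1,\xi\rangle$, and $E(t)=\mathcal{E}(1,{\rm Ad}_{e^{\sqrt{-1}t\xi_0}}\eta(x))\ge 0$ (where $\rho_*\xi=(\xi_0,\xi_0)$), the identity reads $\Phi_{x,\iota^*\zeta_1}(\Hcapg e^{\sqrt{-1}t\xi})=V(t)-t\ell$ and $\Phi_{\hat\psi(x),\zeta_1}(He^{\sqrt{-1}t\xi})=V(t)-t\ell+E(t)\to\infty$. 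I then split into the two cases dictated by the hypotheses. If $V(t)\to\infty$, the growth hypothesis forces $V(t)/t\to\infty$, whence $V(t)-t\ell\to\infty$. Otherwise the convex function $V$ is bounded above by some $C$; using the equivariance $\eta(e^{\sqrt{-1}t\xi}x)={\rm Ad}_{e^{\sqrt{-1}t\xi_0}}\eta(x)$ established in the proof of Proposition \ref{4.4}, the bound $\|\eta(\cdot)\|^2\le q(\varphi(\cdot))$ with $q$ nondecreasing, and the constant-path estimate $\mathcal{E}(1,\eta')\le\frac{1}{2}\|\eta'\|^2$ from Proposition \ref{kahlerpotential}, one gets $E(t)\le\frac{1}{2}q(C)$ bounded, so subtracting it from $\Phi_{\hat\psi(x),\zeta_1}(He^{\sqrt{-1}t\xi})\to\infty$ again gives $V(t)-t\ell\to\infty$. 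This verifies $(ii)$, and Proposition \ref{4.8} then produces the critical point, i.e. $x\in\mu_{\mathbb{C}}^{-1}(\iota^*\zeta_{\mathbb{C}})_{\iota^*\zeta_1}$.

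The main obstacle is the sign of the correction: the identity expresses $\Phi_{x,\iota^*\zeta_1}$ as $\Phi_{\hat\psi(x),\zeta_1}$ minus the nonnegative term $E(t)$, so properness of the target function does not transfer for free, and one must rule out that all of the divergence of $\Phi_{\hat\psi(x),\zeta_1}$ is carried by the $\mathcal{E}$-term. The two hypotheses are tailored to exactly this dichotomy: the monotone bound $\|\eta\|^2\le q(\varphi)$ keeps $E(t)$ bounded whenever $\varphi$ stays bounded, while the superlinear growth hypothesis handles the complementary regime $\varphi\to\infty$, where a merely linear estimate would fail to beat the $-t\ell$ term. A secondary point requiring care is that the critical point of $\Phi_{\hat\psi(x),\zeta_1}$ need not lie over $\Hcapg\backslash\Hcapgc$; this is why I would draw the input $\Phi_{\hat\psi(x),\zeta_1}(He^{\sqrt{-1}t\xi})\to\infty$ from the base-point-free properness of Proposition \ref{proper} rather than attempting to move the critical point into the subgroup.
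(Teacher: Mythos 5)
Your overall strategy differs from the paper's: you try to verify conditions $(i)$ and $(ii)$ of Proposition \ref{4.8} for $\Phi_{x,\iota^*\zeta_1}$ directly at the base point $g=1$, whereas the paper never does this. Your verification of $(i)$ is fine, and your case analysis for $(ii)$ is correctly organized; the problem is the input you feed into the second case. When $V(t)=\varphi(e^{\sqrt{-1}t\xi}x)$ stays bounded you need $\Phi_{\hat{\psi}(x),\zeta_1}(He^{\sqrt{-1}t\xi})\to\infty$ for every $\xi\in\hcapg-\widetilde{{\rm stab}}(\hat{\psi}(x))$, \emph{measured from the base point} $H\cdot 1$, and you claim this follows from the properness of $\bar{\Phi}_{\hat{\psi}(x),\zeta_1}$ supplied by Proposition \ref{proper}. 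Properness only yields divergence of $\Phi_{\hat{\psi}(x),\zeta_1}$ along curves whose image leaves every compact subset of $H\backslash H^{\mathbb{C}}/{\rm Stab}(\hat{\psi}(x))^{\mathbb{C}}$; you give no argument that the ray $[He^{\sqrt{-1}t\xi}]$ does so when $\xi\notin\widetilde{{\rm stab}}(\hat{\psi}(x))$. By geodesic convexity the only alternative to divergence is that $\Phi_{\hat{\psi}(x),\zeta_1}$ is bounded along the ray, which (again by properness) happens exactly when the ray stays in a compact set of the double coset space — so the escape-to-infinity claim is essentially equivalent to what you are trying to prove, not a consequence of properness. Note that Proposition \ref{4.8} establishes condition $(ii)$ only at a critical point $g$, where $\frac{d}{dt}\big|_{t=0}\Phi=0$ together with $\frac{d^2}{dt^2}\big|_{t=0}\Phi=\|\xi^*\|^2>0$ forces eventual linear growth; at a non-critical base point the first derivative may start negative and never become positive, so that mechanism is unavailable. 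Your closing remark shows you saw this danger, but invoking ``base-point-free properness'' does not close it.

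The paper's proof is built precisely to avoid this point. It restricts $\bar{\Phi}_{\hat{\psi}(x),\zeta_1}$ to the \emph{closed} subset $\Hcapg\backslash\Hcapgc/{\rm Stab}(x)^{\mathbb{C}}$ (properness trivially survives restriction to closed subsets), dominates that restriction by the auxiliary function $\hat{F}_+=\Phi_{x,\iota^*\zeta_1}+q\circ\varphi$ using the constant-path bound $\mathcal{E}(1,\eta(gx))\le q(\varphi(gx))$, concludes that $\hat{F}_+$ is proper and bounded below and hence attains a minimum, reads off from the critical-point equation $(1+q'(\varphi))\mu-\iota^*\zeta_1=0$ that $\Phi_{x,\cdot}$ has a critical point for a rescaled parameter, and finally uses Proposition \ref{perturb} (where the superlinear growth hypothesis on $\varphi$ enters) to return to the parameter $\iota^*\zeta_1$. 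The replacement of the $\mathcal{E}$-correction by $q\circ\varphi$ is what makes the critical-point equation a scalar multiple of the desired moment-map condition; this detour, together with Proposition \ref{perturb}, is doing real work that your argument omits. To repair your proof you would either have to supply the missing escape-to-infinity lemma for rays transverse to $\widetilde{{\rm stab}}$ at an arbitrary base point, or fall back on the paper's dominating-function argument.
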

\begin{proof}
Assume that $\Phi_{\hat{\psi}(x),\zeta_1}$ has a critical point.
From Proposition \ref{proper}, $\Phi_{\hat{\psi}(x),\zeta_1}$ is 
${\rm Stab}(\hat{\psi}(x))^{\mathbb{C}}$ invariant and the induced map
\begin{eqnarray}
\bar{\Phi}_{\hat{\psi}(x),\zeta_1}:H\backslash H^{\mathbb{C}}/
{\rm Stab}(\hat{\psi}(x))^{\mathbb{C}} \to \mathbb{R}\nonumber
\end{eqnarray}
is proper, and bounded from below.
Since $H_{\rho}\backslash H_{\rho}^{\mathbb{C}}/
{\rm Stab}(x)^{\mathbb{C}}$ is a closed subset of 
$H\backslash H^{\mathbb{C}}/{\rm Stab}(\hat{\psi}(x))^{\mathbb{C}}$,
$F:=\bar{\Phi}_{\hat{\psi}(x),\zeta_1}
|_{H_{\rho}\backslash H_{\rho}^{\mathbb{C}}/{\rm Stab}(x)^{\mathbb{C}}}$
is also proper and bounded below.

If $(Q,\eta):= (1,\eta(g x))$ for $g\in H_{\rho}^{\mathbb{C}}$, 
we have an upper estimate
\begin{eqnarray}
\mathcal{E}(1,\eta(gx)) 
\le \|\eta(x)\|^2 
\le q(\varphi (gx)),\nonumber
\end{eqnarray}
by taking a path $h\in P(1,1)$ to be $h(s)=1$.
Hence if we take $\xi \in \hcapg$, then
\begin{eqnarray}
\Phi_{\hat{\psi}(x),\zeta_1}(H e^{\sqrt{-1}\xi}) 
\le \Phi_{x,\iota^*\zeta_1}(\Hcapg e^{\sqrt{-1}\xi}) 
+ q(\varphi (e^{\sqrt{-1}\xi} x))
=: \hat{F}_+(\Hcapg e^{\sqrt{-1}\xi}).\nonumber
\end{eqnarray}
Now $\hat{F}_+$ induces a function $F_+: H_{\rho}\backslash 
H_{\rho}^{\mathbb{C}}/{\rm Stab}(x)^{\mathbb{C}} \to \mathbb{R}$, 
which satisfies $F_+ \ge F$, 
therefore $F_+$ is also proper and bounded from below.
Thus $\hat{F}_+$ has a minimum point $e^{\sqrt{-1}\xi}\in 
H_{\rho}\backslash H_{\rho}^{\mathbb{C}}$, and have
\begin{eqnarray}
0 = (d \hat{F}_+)_{e^{\sqrt{-1}\xi}} = (1 + q'(\varphi(e^{\sqrt{-1}\xi} x)))
\mu (e^{\sqrt{-1}\xi} x) - \iota^*\zeta_1.\nonumber
\end{eqnarray}
Now we have shown that $\Phi_{x,s\cdot\iota^*\zeta_1}$ has a critical point
if we put $s=1 + q'(\varphi(e^{\sqrt{-1}\xi} x))$,
hence $\Phi_{x,\iota^*\zeta_1}$ also has a critical point by 
Proposition \ref{perturb}.
\end{proof}
\begin{rem}
{\rm The assumption of Proposition \ref{converse} is always satisfied
if $M$ is the quaternionic vector space $\mathbb{H}^N$ with Euclidean
metric, and $H\subset Sp(N)$ acts on $M$ linearly.}
\end{rem}

\subsection{Proof of the main theorem}\label{pf.of1.1}
\begin{prop}
${\rm Stab}(x)\subset \Hcapg$ and ${\rm Stab}(y) \subset H$ are 
isomorphic as Lie groups for any $x\in \mu^{-1} (\iota^*\zeta)$ and 
$y\in \sigma^{-1}(\zeta)$ satisfying $yH = \psi(x\Hcapg)$.
\label{stab}
\end{prop}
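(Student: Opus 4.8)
The plan is to deduce the isomorphism of the \emph{compact} stabilizers from the already established isomorphism of the \emph{complexified} stabilizers, using the fact that at a point of the real moment map level set the complex stabilizer is the complexification of the compact one. First I would note that since $x\in \mu^{-1}(\iota^*\zeta)$ we have $\mu_1(x)=\iota^*\zeta_1$, so by the definition of $\alpha_{x,\iota^*\zeta_1}$ at the identity coset, $(d\Phi_{x,\iota^*\zeta_1})_{\Hcapg\cdot 1}(\xi)=\langle \mu_1(x)-\iota^*\zeta_1,\xi\rangle=0$ for all $\xi\in\hcapg$; thus $\Hcapg\cdot 1$ is a critical point of $\Phi_{x,\iota^*\zeta_1}$. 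Applying Proposition \ref{reductive} with $L=\Hcapg$ shows that $\Psi_{\Hcapg}$ restricts to a diffeomorphism ${\rm Stab}(x)\times {\rm stab}(x)\to {\rm Stab}(x)^{\mathbb{C}}$, so that ${\rm Stab}(x)^{\mathbb{C}}={\rm Stab}(x)\cdot\exp(\sqrt{-1}\,{\rm stab}(x))$ is a global Cartan decomposition and ${\rm Stab}(x)$ is a maximal compact subgroup of ${\rm Stab}(x)^{\mathbb{C}}$. Running the identical argument with $L=H$ at the point $y\in\sigma^{-1}(\zeta)$, for which $\sigma_1(y)=\zeta_1$ and hence $(d\Phi_{y,\zeta_1})_{H\cdot 1}=0$, shows that ${\rm Stab}(y)$ is a maximal compact subgroup of ${\rm Stab}(y)^{\mathbb{C}}$.

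Next I would relate the two complexified stabilizers. The hypothesis $yH=\psi(x\Hcapg)$ means, under the identifications $\mu^{-1}(\iota^*\zeta)/\Hcapg\cong \mu_{\mathbb{C}}^{-1}(\iota^*\zeta_{\mathbb{C}})_{\iota^*\zeta_1}/\Hcapgc$ and $\sigma^{-1}(\zeta)/H\cong \sigma_\mathbb{C}^{-1}(\zeta_{\mathbb{C}})_{\zeta_1}/H^{\mathbb{C}}$ recalled above, that $y$ and $\hat{\psi}(x)$ represent the same class in $\sigma_\mathbb{C}^{-1}(\zeta_{\mathbb{C}})_{\zeta_1}/H^{\mathbb{C}}$; here $\hat{\psi}(x)\in \sigma_\mathbb{C}^{-1}(\zeta_{\mathbb{C}})_{\zeta_1}$ by Proposition \ref{psi}. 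Hence $y=h\cdot\hat{\psi}(x)$ for some $h\in H^{\mathbb{C}}$, so ${\rm Stab}(y)^{\mathbb{C}}=h\,{\rm Stab}(\hat{\psi}(x))^{\mathbb{C}}h^{-1}$ is isomorphic to ${\rm Stab}(\hat{\psi}(x))^{\mathbb{C}}$. Combining this with the isomorphism ${\rm Stab}(x)^{\mathbb{C}}\cong {\rm Stab}(\hat{\psi}(x))^{\mathbb{C}}$ of (\ref{stabc}) yields an isomorphism of complex Lie groups $\phi:{\rm Stab}(x)^{\mathbb{C}}\to {\rm Stab}(y)^{\mathbb{C}}$.

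Finally I would pass from the complex groups to their compact forms. By the Remark following Proposition \ref{reductive}, ${\rm stab}(x)^{\mathbb{C}}$ and ${\rm stab}(y)^{\mathbb{C}}$ are reductive, so the conjugacy theorem for maximal compact subgroups applies to ${\rm Stab}(x)^{\mathbb{C}}$ and ${\rm Stab}(y)^{\mathbb{C}}$. The image $\phi({\rm Stab}(x))$ is then a maximal compact subgroup of ${\rm Stab}(y)^{\mathbb{C}}$, hence conjugate within ${\rm Stab}(y)^{\mathbb{C}}$ to ${\rm Stab}(y)$; composing $\phi$ with this inner automorphism gives the desired isomorphism ${\rm Stab}(x)\cong {\rm Stab}(y)$.

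The main obstacle I expect is the middle step: correctly reading off from $yH=\psi(x\Hcapg)$ that $y$ and $\hat{\psi}(x)$ lie in a single $H^{\mathbb{C}}$-orbit. This requires the compatibility of the map $\psi$ on the real quotients with the map induced by $\hat{\psi}$ on the complex stable-point quotients, i.e.\ the commutativity of the diagram relating the two descriptions, together with the fact (Proposition \ref{psi}) that $\hat{\psi}$ carries the stable locus into the stable locus, so that the $H^{\mathbb{C}}$-orbit of $\hat{\psi}(x)$ indeed meets $\sigma^{-1}(\zeta)$ and is represented by $y$. By comparison, the verification that Proposition \ref{reductive} applies at both $x$ and $y$ is the routine critical-point computation indicated above.
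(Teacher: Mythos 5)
Your proposal is correct and follows essentially the same route as the paper, whose proof is a one-line citation of Proposition \ref{reductive} and the isomorphism (\ref{stabc}); you simply supply the details the paper leaves implicit (the vanishing of $d\Phi$ at the identity coset on the real level sets, the conjugation by $h$ relating ${\rm Stab}(y)^{\mathbb{C}}$ to ${\rm Stab}(\hat{\psi}(x))^{\mathbb{C}}$, and the conjugacy of maximal compact subgroups needed to pass from the complexified stabilizers to their compact forms).
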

\begin{proof}
The assertion follows directly from Proposition \ref{reductive} and 
the isomorphism (\ref{stabc}).
\end{proof}

\begin{proof}[Proof of Theorem \ref{1.1}]
Define an open subsets $\mu_{\mathbb{C}}^{-1}
(\iota^*\zeta_{\mathbb{C}})_{\iota^*\zeta_1}^{ss} \subset 
\mu_{\mathbb{C}}^{-1}(\iota^*\zeta_{\mathbb{C}})$ and 
$\sigma_\mathbb{C}^{-1}(\zeta_{\mathbb{C}})_{\zeta_1}^{ss} \subset 
\sigma_\mathbb{C}^{-1}(\zeta_{\mathbb{C}})$ by
\begin{eqnarray}
\mu_{\mathbb{C}}^{-1} (\iota^*\zeta_{\mathbb{C}})_{\iota^*\zeta_1}^{ss}
&:=& \{x\in \mu_{\mathbb{C}}^{-1}(\iota^*\zeta_{\mathbb{C}});\ 
\overline{\Hcapgc\cdot x} \cap \mu_1^{-1}(\iota^*\zeta_1) \neq \varnothing\},
\nonumber\\
\sigma_\mathbb{C}^{-1}(\zeta_{\mathbb{C}})_{\zeta_1}^{ss} 
&:=& \{y\in \mu_{\mathbb{C}}^{-1}(\iota^*\zeta_{\mathbb{C}});\ 
\overline{H^{\mathbb{C}}\cdot y} \cap \sigma_1^{-1}(\zeta_1) \neq \varnothing\}.
\nonumber
\end{eqnarray}
Then the naturally induced maps
\begin{eqnarray}
\mu^{-1} (\iota^*\zeta) / \Hcapg \to \mu_{\mathbb{C}}^{-1} 
(\iota^*\zeta_{\mathbb{C}})_{\iota^*\zeta_1}^{ss} // \Hcapgc, \quad
\sigma^{-1}(\zeta) / H \to 
\sigma_\mathbb{C}^{-1}(\zeta_{\mathbb{C}})_{\zeta_1}^{ss} // H^{\mathbb{C}}
\nonumber
\end{eqnarray}
gives an isomorphisms as complex analytic spaces 
by the main theorem in \cite{HL}, 
where $//$ is the categorical quotient.
Moreover $\mu_{\mathbb{C}}^{-1} 
(\iota^*\zeta_{\mathbb{C}})_{\iota^*\zeta_1}^{ss} $ and 
$\sigma_\mathbb{C}^{-1}(\zeta_{\mathbb{C}})_{\zeta_1}^{ss} $
are the minimal open subsets of $\mu_{\mathbb{C}}^{-1} 
(\iota^*\zeta_{\mathbb{C}})$ and 
$\sigma_\mathbb{C}^{-1}(\zeta_{\mathbb{C}})$ 
containing $\mu_{\mathbb{C}}^{-1} 
(\iota^*\zeta_{\mathbb{C}})_{\iota^*\zeta_1}$ and
$\sigma_\mathbb{C}^{-1}(\zeta_{\mathbb{C}})_{\zeta_1}$, respectively.
Then $\psi$ gives a bijective map 
$\mu_{\mathbb{C}}^{-1} 
(\iota^*\zeta_{\mathbb{C}})_{\iota^*\zeta_1}^{ss} // \Hcapgc \to 
\sigma_\mathbb{C}^{-1}(\zeta_{\mathbb{C}})_{\zeta_1}^{ss} // H^{\mathbb{C}}$
by Propositions \ref{4.4}, \ref{psi}, and \ref{converse}.
Moreover it is biholomorphic since $\hat{\psi}$ is obviously holomorphic and 
the inverse of $\psi$ is also holomorphically defined in the proof of 
Proposition \ref{4.4}.

Let $(I^{\iota^*\zeta}_1,I^{\iota^*\zeta}_2,I^{\iota^*\zeta}_3)$ be the
hypercomplex structure on $\mu^{-1} (\iota^*\zeta)/ \Hcapg$ induced from 
$(I_1,I_2,I_3)$ on $M$. 
Similarly, let $(I^{\zeta}_1,I^{\zeta}_2,I^{\zeta}_3)$ be 
the hypercomplex structure on $\sigma^{-1}(\zeta) / H$ induced from 
$(I_1\times I_{G,1},I_2\times I_{G,2},I_3\times I_{G,3})$ on $M\times N_G$.
Moreover, let $\omega^{\iota^*\zeta}_i$ and $\omega^{\zeta}_i$ be the 
corresponding K\"ahler forms.

If $\Hcapg$ acts on $\mu^{-1} (\iota^*\zeta)$ freely, 
then $H$ also acts on $\sigma^{-1}(\zeta)$ freely from Proposition \ref{stab},
hence $\mu^{-1} (\iota^*\zeta)/ \Hcapg$ and $\sigma^{-1}(\zeta) / H$ become 
smooth \hK manifolds by \cite{HKLR}. 
Since $M$ and $M\times N_G$ are complete, 
$\mu^{-1} (\iota^*\zeta)/ \Hcapg$ and $\sigma^{-1}(\zeta) / H$ are 
complete, too.
See \cite{K2} for the completeness of $N_G$.

The equality $\psi^*(\omega^{\zeta}_2 
+\sqrt{-1}\omega^{\zeta}_3) = \omega^{\iota^*\zeta}_2 + 
\sqrt{-1}\omega^{\iota^*\zeta}_3$ follows directly from the definition of $\hat{\psi}$ 
in Section \ref{corresp} and the fact that any fiber of $T^*G^{\mathbb{C}}$ 
are holomorphic Lagrangian submanifolds.

Next we show the corresponding of K\"ahler classes. 
For each $y\in S^2 = \{y=(y_1,y_2,y_3) \in \mathbb{R}^3;\ 
y_1^2 +y_2^2 +y_3^2 =1\}$, put 
\begin{eqnarray}
I^{\iota^*\zeta}_y := \sum_{i=1}^{3}y_iI^{\iota^*\zeta}_i,\quad 
I^{\zeta}_y := \sum_{i=1}^{3}y_iI^{\zeta}_i.\nonumber
\end{eqnarray}
Now we take $y',y''\in S^2$ such that $\{y,y',y''\}$ is the 
orthonormal basis of $\mathbb{R}^3$ with the positive orientation.
Then we can apply Theorem \ref{1.1} for the complex structure 
$I^{\iota^*\zeta}_y$ and $I^{\zeta}_y$, and obtain a biholomorphism 
$\psi_y$. 
Thus we obtain a continuous family of diffeomorphisms $\{\psi_y\}_y$
parametrized by $y\in S^2$.
Since $S^2$ is connected, the induced maps 
\begin{eqnarray}
\psi_y^*: H^2(\sigma^{-1}(\zeta) / H,\mathbb{R})
\to H^2(\mu^{-1} (\iota^*\zeta)/ \Hcapg,\mathbb{R})\nonumber
\end{eqnarray}
does not depend on $y\in S^2$.
Since each $\psi_y$ identifies the holomorphic symplectic forms 
with respect to $I^{\iota^*\zeta}_y$ and $I^{\zeta}_y$, 
therefore $\psi_y^*[\omega^{\zeta}_i] = [\omega^{\iota^*\zeta}_i]$ 
holds for $i=1,2,3$.
\end{proof}

Finally, we show the correspondence of the parameter spaces of 
two \hK quotients.
\begin{prop}
Let $\zeta,\zeta'\in \ImH \otimes Z_{H}$ satisfies 
$\iota^*\zeta = \iota^*\zeta'$.
Then \hK quotients $\sigma^{-1}(\zeta) / H$ and $\sigma^{-1}(\zeta') / H$ are 
canonically identified.
\end{prop}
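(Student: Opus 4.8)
The plan is to realize the identification by an explicit $H$-equivariant hyper-K\"ahler isometry of $M\times N_G$ that fixes the $M$-factor and acts on $N_G=T^*G^{\mathbb{C}}$ by a ``central translation'', chosen so that it carries $\sigma^{-1}(\zeta)$ onto $\sigma^{-1}(\zeta')$ and descends to the quotients. The only genuine input is a Lie-algebraic identity showing that the difference $\zeta-\zeta'$ is of the very special form that such a translation can produce.

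First I would isolate the translation symmetry of $N_G$. For a triple $c=(c_1,c_2,c_3)\in\mathbf{z}(\mathbf{g})\otimes\ImH$, where $\mathbf{z}(\mathbf{g})$ is the centre of $\mathbf{g}$, define $\tau_c:\mathcal{N}_G\to\mathcal{N}_G$ by $\tau_c(T):=(T_0,T_1+c_1,T_2+c_2,T_3+c_3)$. Since each $c_i$ is central, every bracket $[T_0,c_i]$, $[T_j,c_k]$, $[c_j,c_k]$ vanishes, so $\tau_c$ preserves the Nahm equations (\ref{nahm}); for the same reason it commutes with the gauge action of $\mathcal{G}$, hence descends to a map $\tau_c:N_G\to N_G$ equivariant for the $G\times G$-action, and thus for the $H$-action through $\rho$. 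Being a translation of the flat $L^2$-space $C^1([0,1],\mathbf{g})\otimes\mathbb{H}$, it preserves $g_G$ and each $I_{G,i}$, so it is a hyper-K\"ahler isometry. By Theorem \ref{Dancer}, $\nu^0(\tau_c[T])=\nu^0([T])+c$ and $\nu^1(\tau_c[T])=\nu^1([T])-c$, i.e. $\nu\circ\tau_c=\nu+(c,-c)$. Consequently $\mathrm{id}_M\times\tau_c$ satisfies $\sigma\circ(\mathrm{id}_M\times\tau_c)=\sigma+\rho^*(c,-c)$, so it maps $\sigma^{-1}(\zeta)$ isometrically onto $\sigma^{-1}(\zeta+\rho^*(c,-c))$ and descends to a hyper-K\"ahler isomorphism of the corresponding quotients.

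It then remains to produce a central $c$ with $\rho^*(c,-c)=\zeta'-\zeta$. Since $\iota^*\zeta=\iota^*\zeta'$ and $\zeta,\zeta'\in\ImH\otimes Z_H$, the difference lies in $\ImH\otimes\big(Z_H\cap{\rm Ann.}\hcapg\big)$; by Lemma \ref{linearalg} there is a unique $w=(\psi,-\psi)\in\ImH\otimes{\rm Ann.}\Delta_{\mathbf{g}}$ with $\rho^*w=\zeta-\zeta'$, and it suffices to know that $\psi$ is central, i.e. $\psi\in\mathbf{z}(\mathbf{g})^*\otimes\ImH$, for then $c:=-\psi$ does the job. Writing $\rho_*=(\rho_0,\rho_1):\mathbf{h}\to\mathbf{g}\oplus\mathbf{g}$ for the two component homomorphisms, and using $Z_H={\rm Ann.}[\mathbf{h},\mathbf{h}]$ together with $\mathbf{z}(\mathbf{g})^*={\rm Ann.}[\mathbf{g},\mathbf{g}]$, the known relation $\rho^*(\psi,-\psi)\in Z_H$ reads $\psi\big((\rho_0-\rho_1)[\mathbf{h},\mathbf{h}]\big)=0$; hence $\psi$ is central as soon as $(\rho_0-\rho_1)([\mathbf{h},\mathbf{h}])=[\mathbf{g},\mathbf{g}]$.

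The main obstacle is exactly this last Lie-algebraic identity. The inclusion $\subseteq$ is immediate, while $\supseteq$ is where the hypotheses must enter: surjectivity of $\bar\rho$ gives that $\rho_0-\rho_1:\mathbf{h}\to\mathbf{g}$ is onto, and I would combine this with the reductive decompositions $\mathbf{h}=\mathbf{z}(\mathbf{h})\oplus[\mathbf{h},\mathbf{h}]$ and $\mathbf{g}=\mathbf{z}(\mathbf{g})\oplus[\mathbf{g},\mathbf{g}]$, noting that each $\rho_i$ sends $[\mathbf{h},\mathbf{h}]$ into $[\mathbf{g},\mathbf{g}]$ and $\mathbf{z}(\mathbf{h})$ into the centraliser of its own semisimple image. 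The crux is to verify that central directions of $\mathbf{h}$ cannot contribute any new direction inside a simple factor of $[\mathbf{g},\mathbf{g}]$, so that surjectivity onto that factor is forced through $[\mathbf{h},\mathbf{h}]$ alone; one checks this factor by factor, the homomorphism constraint ruling out the naive potential counterexamples. Granting the identity, $\psi$ is central, $c=-\psi$ is the unique central triple with $\rho^*(c,-c)=\zeta'-\zeta$, and the induced map $\sigma^{-1}(\zeta)/H\to\sigma^{-1}(\zeta')/H$ is independent of all choices, hence canonical; since $\mathrm{id}_M\times\tau_c$ is a tri-holomorphic isometry it identifies $\omega_1,\omega_2,\omega_3$ simultaneously, giving the identification as hyper-K\"ahler orbifolds.
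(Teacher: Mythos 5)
Your construction is the same as the paper's: the paper takes the unique $A=(A_1,A_2,A_3)$ with $\rho^*(A,-A)=\zeta'-\zeta$ from Lemma \ref{linearalg}, sets $\hat A(t):=(0,A_1,A_2,A_3)$, and maps $(x,[T])\mapsto (x,[T+\hat A])$ --- exactly your $\mathrm{id}_M\times\tau_c$. You are in fact more careful than the paper about why this map exists: for $T+\hat A$ to satisfy the Nahm equations (\ref{nahm}) for \emph{every} $T\in\mathcal{N}_G$, and for $[T]\mapsto[T+\hat A]$ to be well defined on $\mathcal{G}_0$-orbits (one needs ${\rm Ad}_{g(s)}A_i=A_i$ with $g(s)$ arbitrary for $s$ in the interior of $[0,1]$), each $A_i$ must be central in $\mathbf{g}$. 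The paper's proof never addresses this, and your reduction of centrality to the identity $(\rho_0-\rho_1)([\mathbf{h},\mathbf{h}])\supseteq[\mathbf{g},\mathbf{g}]$ (together with $Z_H={\rm Ann.}[\mathbf{h},\mathbf{h}]$ and $\mathbf{z}(\mathbf{g})=[\mathbf{g},\mathbf{g}]^{\perp}$) is correct.

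The gap is that you do not prove that identity: your final paragraph says ``granting the identity.'' This is not a formality. The condition you extract from $\rho^*(\psi,-\psi)\in Z_H$ is $\rho_0^*\omega_\psi=\rho_1^*\omega_\psi$ for the skew form $\omega_\psi=\psi([\cdot,\cdot])$, and for general linear maps the implication ``$\rho_0^*\omega=\rho_1^*\omega$ and $\rho_0-\rho_1$ surjective imply $\omega=0$'' is false (two distinct rotations of the plane both preserve the area form while their difference is invertible), so the Lie-homomorphism structure must enter in an essential way. Your sketch points at the right ingredients --- $\rho_i([\mathbf{h},\mathbf{h}])$ is semisimple, $\rho_i(\mathbf{z}(\mathbf{h}))$ is abelian and centralizes it, so the central directions of $\mathbf{h}$ contribute at most rank-many dimensions and cannot fill a simple factor --- but as written it is a plan, not a proof, and the ``naive potential counterexamples'' are not actually ruled out on the page. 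To be fair, the paper's one-line proof has exactly the same lacuna, silently rather than flagged; so your proposal reproduces the paper's argument, improves its bookkeeping, and still owes the reader the verification that $A$ is central.
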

\begin{proof}
Take $\zeta,\zeta'\in \ImH \otimes Z_{H}$ such that 
$\iota^*\zeta = \iota^*\zeta'$.
Then $\zeta' - \zeta \in \ImH \otimes {\rm Ann}. \hcapg$, 
and there exists a unique $A = (A_1,A_2,A_3)\in \ImH \otimes \mathbf{g}^*$ 
such that 
$\rho^*(A, -A) = \zeta' - \zeta$ by Lemma \ref{linearalg}.
Define $\hat{A}\in \mathcal{N}_G$ by $\hat{A}(t):= (0,A_1,A_2,A_3)$ for all 
$t\in [0,1]$.
Then a $C^{\infty}$ map $\sigma^{-1}(\zeta) \to \sigma^{-1}(\zeta')$ 
defined by $(x,[T])\mapsto (x,[T + \hat{A}])$ gives an isomorphism 
$\sigma^{-1}(\zeta)/ H \to \sigma^{-1}(\zeta')/ H$.
\end{proof}

\section{Examples}
Here we raise some examples which Theorem \ref{1.1} can be applied to.
\subsection{Hilbert schemes of $k$ points on $\mathbb{C}^2$}\label{hilb}
Here we apply the main results obtained in the previous sections
to the case of
\begin{eqnarray}
M = {\rm End} (\mathbb{C}^k) \oplus {\rm End} (\mathbb{C}^k) 
\oplus \mathbb{C}^k \oplus (\mathbb{C}^k)^*\nonumber
\end{eqnarray}
$G = U(k)$, $H = U(k)\times U(k)$ and $\rho = {\rm id}: H \to G\times G$.
Here, $H$-action on $M$ is defined by
$(g_0,g_1)\cdot (A,B,p,q) := (g_0Ag_1^{-1}, g_1Bg_0^{-1},g_0 p, qg_0^{-1})$
for $g_0,g_1\in U(k)$, $A,B\in {\rm End} (\mathbb{C}^k)$, $p\in \mathbb{C}^k$
and $q\in (\mathbb{C}^k)^*$.
According to \cite{Na1}, $Z_{\Hcapg} \cong\mathbb{R}$ and 
$\mu^{-1}(\iota^*\zeta)/\Hcapg$ is a smooth hyper-K\"ahler manifolds 
diffeomorphic to a crepant resolution of $(\mathbb{C}^2)^k/\mathcal{S}_k$
if $\iota^*\zeta \in {\rm Im}\mathbb{H}$ is given by
$\iota^*\zeta = (t,0,0)$ for $t\neq 0$ in this situation.
Here, $\mathcal{S}_k$ is the symmetric group acting on $(\mathbb{C}^2)^k$.
If $\iota^*\zeta = 0$, then $\mu^{-1}(0)/\Hcapg$ is isometric to
$(\mathbb{C}^2)^k/\mathcal{S}_k$ with Euclidean metric.
Then we have a family of smooth hyper-K\"ahler manifolds 
$\sigma^{-1}(\zeta)/ H$ which are biholomorphic to
$\mu^{-1}(\iota^*\zeta)/\Hcapg$. 
In particular, we can study $\sigma^{-1}(0)/ H$ which gives a 
singular hyper-K\"ahler metric on $(\mathbb{C}^2)^k/\mathcal{S}_k$ as follows.

\begin{thm}
Let $M,H,G,\rho$ be as above. Then $\sigma^{-1}(0)/ H$ is isometric to
$(\mathbb{C}_{Taub-NUT}^2)^k/\mathcal{S}_k$ on their regular parts, where
$\mathbb{C}_{Taub-NUT}^2$ is Taub-NUT space.
\label{taubnuthilb}
\end{thm}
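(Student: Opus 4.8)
The plan is to fix the complex geometry with Theorem \ref{1.1} and then identify the metric by abelianizing on the regular locus. The first assertion of Theorem \ref{1.1}, applied with $\zeta = 0$, gives a biholomorphism of complex analytic spaces $\sigma^{-1}(0)/H \cong \mu^{-1}(0)/\Hcapg \cong (\mathbb{C}^2)^k/\mathcal{S}_k$. On the regular locus, where $H$ acts freely, the quotient is a smooth \hK manifold whose complex structure $I_1$ and $(2,0)$-form $\omega_2^0 + \sqrt{-1}\omega_3^0$ are the standard flat ones on the symmetric product; thus the only remaining task is to identify the K\"ahler form $\omega_1^0$, equivalently the Riemannian metric.

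First I would describe the regular locus. Under the biholomorphism above it corresponds to configurations of $k$ distinct points in $\mathbb{C}^2$; over it the data $(A,B,p,q)$ together with $(Q,\eta)\in N_G = G^{\mathbb{C}}\times\mathbf{g}^{\mathbb{C}}$ become simultaneously diagonal in bases adapted to the $k$ points, so that $\mathbb{C}^k$ splits as a sum of $k$ lines. The complexified gauge group $G^{\mathbb{C}}\times G^{\mathbb{C}} = GL(k,\mathbb{C})\times GL(k,\mathbb{C})$ then reduces to the diagonal maximal torus $T^{\mathbb{C}}\times T^{\mathbb{C}}$, $T = U(1)^k$---that is, to $k$ commuting copies of the $k=1$ gauge group $U(1)\times U(1)$---and the residual finite symmetry is the Weyl group $\mathcal{S}_k$ permuting the lines.

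The heart of the argument is to show that the full \hK quotient abelianizes accordingly. Restricting to diagonal Nahm data, the bracket terms in the defining equations (\ref{nahm}) drop out on the Cartan, so $N_G = T^*G^{\mathbb{C}}$ restricts to $N_T = (T^*\mathbb{C}^*)^k$ with its product \hK structure. I would then use the K\"ahler-potential description of the quotient: the potential on $M = \mathbb{H}^{k^2+k}$ is the Euclidean $\varphi = \frac{1}{2}\|\cdot\|^2$, that on $N_G$ is $\mathcal{E}$ of Proposition \ref{kahlerpotential}, and by \cite{HL} the K\"ahler potential of $\sigma^{-1}(0)/H$ is obtained by minimizing $\varphi + \mathcal{E}$ along the $H^{\mathbb{C}}$-orbits. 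On diagonal data both summands split as sums of $k$ scalar terms, one per line, so the reduced potential---and with it $\omega_1^0$---is a Riemannian sum over $k$ independent sectors, each of which is the $k=1$ \hK quotient of $\mathbb{C}^4\times T^*\mathbb{C}^*$ by $U(1)\times U(1)$ at level $0$.

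Each such sector is precisely the hyper-K\"ahler modification of $\mathbb{C}^2$ by $N_{U(1)} = T^*\mathbb{C}^*$, i.e.\ the abelian Taub-NUT construction of \cite{GRG}\cite{B}; combined with Theorem \ref{1.1} in the case $k=1$ (which identifies the underlying complex surface with $\mathbb{C}^2$), this exhibits each sector as the Taub-NUT space $\mathbb{C}_{Taub-NUT}^2$. Since $\mathcal{S}_k$ permutes the sectors by isometries of the resulting product metric, the regular part of $\sigma^{-1}(0)/H$ is isometric to $(\mathbb{C}_{Taub-NUT}^2)^k/\mathcal{S}_k$. The main obstacle is the splitting asserted above: one must show that the minimizing path $h\in P(h_0,h_1)$ in Proposition \ref{kahlerpotential}, between the diagonal endpoints $h_0,h_1$ produced by the diagonal data, remains diagonal, so that $\mathcal{E}$ really separates and the reduction is an isometry rather than merely a biholomorphism. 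This rests on the diagonal flat being totally geodesic in $G\backslash G^{\mathbb{C}} = U(k)\backslash GL(k,\mathbb{C})$ together with the strict geodesic convexity and uniqueness of minimizers---exactly the convexity input established in Section \ref{hamilton}.
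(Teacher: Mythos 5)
Your overall strategy --- restrict to simultaneously diagonal data over the distinct-points locus and recognize $k$ commuting copies of the abelian Taub-NUT construction --- is the same abelianization idea the paper uses, but your execution through K\"ahler potentials leaves the decisive step unproved. You correctly flag the crux: to make $\mathcal{E}$ and the Kempf--Ness minimization separate into $k$ scalar problems, the relevant minimizers must stay on the diagonal. The justification you offer, however, is not available: the convexity established in Section \ref{hamilton} concerns the functions $\Phi_{x,\zeta}$ on the finite-dimensional symmetric space $L\backslash L^{\mathbb{C}}$, not the path functional $h\mapsto \int_0^1\big(\|h'\|_h^2+\|h\,{\rm Ad}_a^{-1}(\eta)\|_h^2\big)ds$ on $P(h_0,h_1)$ appearing in Proposition \ref{kahlerpotential}. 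Geodesic convexity of that functional along geodesic homotopies and uniqueness of its minimizer are additional facts you would have to prove before the symmetry argument (invariance under conjugation by $T^k$, whose fixed locus in $U(k)\backslash GL(k,\mathbb{C})$ is the diagonal flat) can force the minimizer into the flat. A second unproved assertion is that every $H$-orbit in $\sigma^{-1}(0)$ lying over a configuration of $k$ distinct points contains a point with \emph{all} data diagonal; you fold this into the ``description of the regular locus,'' but it is precisely the surjectivity of the abelianized locus onto the regular part and needs an argument (diagonalizability of $(A,B)$ gives a diagonal representative only of the $H^{\mathbb{C}}$-orbit, not yet of the $H$-orbit inside $\sigma^{-1}(0)$).

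The paper's own proof avoids both issues by never touching the potential. It exhibits $\hat{M}_0=M_0\times N_{T^k}$ as a \hK submanifold of $M\times N_{U(k)}$ preserved by $H_0\cong\mathcal{S}_k\ltimes T^k$, and shows by direct computation of $\hat{\mu}$ and $\nu$ on diagonal data that $\sigma_0^{-1}(0)=(\sigma|_{\hat{M}_0})^{-1}(0)$; this makes $\sigma_0^{-1}(0)/H_0=(\mathbb{C}_{Taub-NUT}^2)^k/\mathcal{S}_k$ an isometrically embedded \hK suborbifold of $\sigma^{-1}(0)/H$ with no minimizer analysis at all. Injectivity is an eigenvalue comparison, and surjectivity follows because the image is complete and of full dimension while $\sigma^{-1}(0)/H\cong(\mathbb{C}^2)^k/\mathcal{S}_k$ is connected by Theorem \ref{1.1}. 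To salvage your route you must supply the convexity and uniqueness of the path minimizer and the surjectivity statement; the moment-map-level-set comparison is the shorter path.
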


Before the proof of Theorem \ref{taubnuthilb}, we see that $N_L$ is identified
with the open subset of $L\times \mathbf{l}^3$ as follows by \cite{DS}, for any compact Lie group
$L$.
Let $T\in \mathcal{N}_L$ and $f:[0,1]\to L$ be the solution of 
the initial value problem
\begin{eqnarray}
{\rm Ad}_{f(s)}T_0(s) + f(s)\frac{d}{ds}f(s)^{-1} &=& 0,\nonumber\\
f(1) &=& 1. \nonumber
\end{eqnarray}
Then a $C^{\infty}$ map $\phi: N_L \to L\times \mathbf{l}^3$ is defined by
\begin{eqnarray}
\phi([T]) := (f(0)^{-1},T_1(1),T_2(1),T_3(1)).\nonumber
\end{eqnarray}
$\phi$ is an diffeomorphism from $N_L$ to an open subset of 
$L\times \mathbf{l}^3$.
In particular, $\phi$ is surjective and an isomorphism of hyper-K\"ahler 
manifolds if $L$ is a torus, 
therefore we may assume $N_{T^k} = T^k\times \mathbb{R}^{3k}$.

Next we begin the proof of Theorem \ref{taubnuthilb}.
The inclusion $T^k\subset U(k)$ which is given by
\begin{eqnarray}
T^k = \{ {\rm diag}(g_1,\cdots,g_k)\in U(k);\ g_1,\cdots,g_k\in S^1\}\nonumber
\end{eqnarray}
induces an embedding $N_{T^k}\subset N_{U(k)}$.
Now we put
\begin{eqnarray}
M_0 &:=& \{(A,B,0,0)\in M;\ A = {\rm diag}(a_1,\cdots,a_k),\ 
B = {\rm diag}(b_1,\cdots,b_k)\} \nonumber\\
&\cong & \mathbb{C}^k \oplus \mathbb{C}^k, \nonumber
\end{eqnarray}
then $\hat{M}_0 := M_0\times N_{T^k}$ is a hyper-K\"ahler submanifold of 
$\hat{M} := M\times N_{U(k)}$.

Let a closed sub group $H_0\subset H$ be generated by
\begin{eqnarray}
\{(g\chi,\chi)\in U(k)\times U(k);\ g\in T^k,\ \chi\in\mathcal{S}_k \},\nonumber
\end{eqnarray}
then $H_0$ is isomorphic to $\mathcal{S}_k\ltimes T^k$.
Then, $H_0$-action is closed on $\hat{M_0}$, and we obtain the hyper-K\"ahler 
moment map $\sigma_0:=\iota_0^*\circ \sigma |_{\hat{M_0}}:\hat{M_0} \to 
{\rm Im}\mathbb{H}\otimes \mathbf{h}_0^*$, where $\iota_0^* : \mathbf{h}^*\to
\mathbf{h}_0^*$ is the adjoint map of the inclusion $\mathbf{h}_0 
\hookrightarrow \mathbf{h}$.
Here, $\mathbf{h}_0 = \mathbf{u}(k) \oplus \{ 0 \}$ is the Lie algebra of $H_0$.

\begin{lem}
We have $\sigma_0^{-1}(0) = (\sigma |_{\hat{M}_0})^{-1}(0)$,
and the naturally induced map $\sigma_0^{-1}(0)/H_0 \to \sigma^{-1}(0)/H$
is injective.
\end{lem}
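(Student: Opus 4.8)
The plan is to realise $\hat M_0$ as the fixed-point set of a single torus in $H$ and then to read off both assertions from equivariance of the moment map together with the conjugacy of maximal tori. Write $\Delta_{T^k}:=\{(t,t)\in U(k)\times U(k);\ t\in T^k\}\subset H$. On the factor $M$ the element $(t,t)$ sends $(A,B,p,q)$ to $(tAt^{-1},tBt^{-1},tp,qt^{-1})$, which is fixed exactly when $A,B$ are diagonal and $p=q=0$; on $N_{U(k)}=U(k)^{\mathbb C}\times\mathbf{g}^{\mathbb C}$ the action (\ref{groupaction}) sends $(Q,\eta)$ to $(tQt^{-1},\mathrm{Ad}_t\eta)$, fixed exactly when $(Q,\eta)$ is diagonal, i.e. lies in $N_{T^k}$. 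Hence $\hat M_0=\hat M^{\Delta_{T^k}}$; in particular $\Delta_{T^k}$ acts trivially on $\hat M_0$.

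For the identity $\sigma_0^{-1}(0)=(\sigma|_{\hat M_0})^{-1}(0)$ I would use that $\sigma$ is the moment map for the $H$-action on $\hat M$ and $\hat M_0=\hat M^{\Delta_{T^k}}$, so by equivariance $\sigma|_{\hat M_0}$ lands in the $\mathrm{Ad}^*_{\Delta_{T^k}}$-fixed subspace; since $\mathrm{Ad}_{(t,t)}$ acts as $(\mathrm{Ad}_t,\mathrm{Ad}_t)$ and $(\mathbf u(k)^*)^{T^k}=(\mathbf t^k)^*$, both components $\sigma^{(0)},\sigma^{(1)}$ are diagonal. A direct computation gives moreover $\sigma^{(1)}=-\sigma^{(0)}$ on $\hat M_0$: on $M_0$ one has $\hat\mu^{(1)}=-\hat\mu^{(0)}$ (there $AB=BA$ is diagonal and $p=q=0$), while on $N_{T^k}$ the Nahm data are $s$-independent, so Theorem \ref{Dancer} yields $\nu^1=-\nu^0$. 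As $\iota_0^*$ is restriction to $\mathbf h_0\supseteq\mathbf t^k\oplus\{0\}$ and $\sigma^{(0)}$ is already diagonal, the vanishing of $\iota_0^*\sigma$ already forces $\sigma^{(0)}=0$, whence $\sigma^{(1)}=-\sigma^{(0)}=0$ and $\sigma=0$; this is the first claim.

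For injectivity the induced map is well defined because $H_0\subset H$ preserves $\sigma_0^{-1}(0)=\sigma^{-1}(0)\cap\hat M_0$. A short computation, using $Z_{U(k)}(T^k)=T^k$, gives $N_H(\Delta_{T^k})=\{(u_0\chi,u_1\chi);\ u_0,u_1\in T^k,\ \chi\in\mathcal S_k\}=H_0\cdot\Delta_{T^k}$, and since $\Delta_{T^k}$ acts trivially on $\hat M_0$ it follows that $N_H(\Delta_{T^k})\cdot p=H_0\cdot p$ for every $p\in\hat M_0$. Thus injectivity reduces to showing that $h\cdot p\in\hat M_0$ implies $h\cdot p\in N_H(\Delta_{T^k})\cdot p$. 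The linchpin is that $\Delta_{T^k}$ is a \emph{maximal} torus of $\mathrm{Stab}_H(p)$ for every $p\in\hat M_0$: one has $\Delta_{T^k}\subset\mathrm{Stab}_H(p)$, and its centraliser there is $\mathrm{Stab}_H(p)\cap(T^k\times T^k)$; but for $(s_0,s_1)\in T^k\times T^k$ the stabiliser condition contains $s_0Qs_1^{-1}=Q$, and as $Q\in(T^k)^{\mathbb C}$ has all entries nonzero this forces $s_0=s_1$, so the centraliser is $\Delta_{T^k}$ itself and the torus is self-centralising, hence maximal.

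Granting this, the argument finishes quickly: if $h\cdot p\in\hat M_0=\hat M^{\Delta_{T^k}}$ then $h^{-1}\Delta_{T^k}h\subset\mathrm{Stab}_H(p)$, and both $\Delta_{T^k}$ and $h^{-1}\Delta_{T^k}h$ are tori of dimension $k$, equal to the rank of $\mathrm{Stab}_H(p)$ because $\Delta_{T^k}$ is maximal; so by conjugacy of maximal tori there is $k_0\in\mathrm{Stab}_H(p)$ with $k_0^{-1}(h^{-1}\Delta_{T^k}h)k_0=\Delta_{T^k}$, whence $hk_0\in N_H(\Delta_{T^k})$ and $h\cdot p=(hk_0)\cdot p\in N_H(\Delta_{T^k})\cdot p=H_0\cdot p$. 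I expect the main obstacle to be exactly this maximal-torus step: one must verify that $\Delta_{T^k}$ is maximal in $\mathrm{Stab}_H(p)$ \emph{uniformly} over all of $\hat M_0$, including the degenerate points where $A,B$ have repeated or vanishing entries. The computation above shows the verification rests only on the nonvanishing of the $N_{T^k}$-coordinate $Q$, which is precisely what lets one argument cover the whole fixed locus rather than merely its regular part.
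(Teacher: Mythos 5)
Your proof is correct, and while the first half runs parallel to the paper's, the injectivity argument is genuinely different. For the identity $\sigma_0^{-1}(0)=(\sigma|_{\hat M_0})^{-1}(0)$ the paper simply computes everything in coordinates: it writes $\hat\mu|_{M_0}=-\sqrt{-1}\tau\oplus\sqrt{-1}\tau$ for the explicit $T^k$-moment map $\tau$ and $\rho^*\nu(\theta,y)=\sqrt{-1}(y,-y)$, so that $\sigma=\sqrt{-1}(-\tau+y,\tau-y)$ and $\sigma_0=\sqrt{-1}(-\tau+y)$ visibly have the same zero set; your version packages the same facts via equivariance under $\Delta_{T^k}$ (forcing both components into $(\mathbf t^k)^*$) plus the antisymmetry $\sigma^{(1)}=-\sigma^{(0)}$, which is a cleaner way to see why restricting to the first $\mathbf t^k$-factor loses nothing --- and it quietly repairs the paper's typo $\mathbf h_0=\mathbf u(k)\oplus\{0\}$, since with $\mathbf h_0=\mathbf t^k\oplus\{0\}$ the diagonality of $\sigma^{(0)}$ really is needed. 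For injectivity the paper argues by hand: from $(g_0,g_1)\cdot(x,\theta,y)\in\hat M_0$ it derives that $g_0$ simultaneously conjugates the diagonal matrices $A\theta^{-1}$ and $\theta B$ to diagonal matrices, extracts a permutation $\chi$ by comparing eigenvalues, and checks that this permutation together with $y=\tau(x)$ realizes the two points in one $H_0$-orbit. You instead invoke the general fixed-point principle: $\hat M_0\subset\hat M^{\Delta_{T^k}}$, $N_H(\Delta_{T^k})\cdot p=H_0\cdot p$, and conjugacy of maximal tori in the compact stabilizer, with the maximality of $\Delta_{T^k}$ in ${\rm Stab}_H(p)$ verified by self-centralization using only the invertibility of the $T^k$-coordinate $\theta$. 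Your route is more conceptual and handles the degenerate points (repeated or vanishing entries of $A,B$) uniformly, where the paper's eigenvalue comparison tacitly relies on the standard fact that a single unitary conjugation matching two commuting families of diagonal matrices can be replaced by a single permutation of joint spectra; the paper's route is more elementary and produces the identifying element of $H_0$ explicitly. Both are valid, and your observation that everything hinges on the nonvanishing of $\theta$ rather than on any genericity of $(A,B)$ is a worthwhile clarification.
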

\begin{proof}
For $x= (A,B,0,0)\in M$, we have
\begin{eqnarray}
\hat{\mu}(x) &=& (\sqrt{-1}(B^*B - AA^*), \sqrt{-1}(-AB - B^*A^*), -AB + B^*A^*)
\nonumber\\
&\ &\quad \oplus (\sqrt{-1}(A^*A - BB^*), \sqrt{-1}(BA + A^*B^*), 
BA - A^*B^*)\nonumber\\
&\in &{\rm Im} \mathbb{H}\otimes (\mathbf{u}(k) \oplus \mathbf{u}(k)),\nonumber
\end{eqnarray}
where $\mathbf{u}(k)$ is the Lie algebra of $U(k)$, and we identify 
$\mathbf{u}(k) \cong \mathbf{u}(k)^*$ by the bilinear form 
$(u,v)\mapsto {\rm tr} (uv^*)$.
If $x\in M_0$, we can put
\begin{eqnarray}
A = {\rm diag}(a_1,\cdots,a_k),\ B = {\rm diag}(b_1,\cdots,b_k),\nonumber
\end{eqnarray}
and we obtain
\begin{eqnarray}
\hat{\mu}(x) = -\sqrt{-1}\tau(x) \oplus \sqrt{-1}\tau(x)
\in {\rm Im} \mathbb{H}\otimes (\mathbf{t}^k \oplus \mathbf{t}^k),\nonumber
\end{eqnarray}
where $\mathbf{t}^k := {\rm Lie} (T^k) \subset \mathbf{u}(k)$,
and $\tau = (\tau_1,\tau_2,\tau_3) : M_0 \to {\rm Im} \mathbb{H}\otimes 
\mathbb{R}^k$ is the hyper-K\"ahler moment map with respect to the
tri-Hamiltonian $T^k$-action on $M_0$
defined by
\begin{eqnarray}
\tau_1(x) &=& {\rm diag}(|a_1|^2 - |b_1|^2,\cdots, |a_k|^2 - |b_k|^2), 
\nonumber\\
\tau_2(x) &=& {\rm diag}(2{\rm Re} (a_1b_1),\cdots, 2{\rm Re} (a_kb_k)), 
\nonumber\\
\tau_3(x) &=& {\rm diag}(2{\rm Im} (a_1b_1),\cdots, 2{\rm Im} (a_kb_k)).
\nonumber
\end{eqnarray}
Under the identification $(\theta,y)\in T^k\times \mathbb{R}^{3k} = N_{T^k}$,
we obtain $\rho^*(\nu(\theta,y)) = \sqrt{-1}(y,-y)$.
Thus we have $\sigma (x,\theta,y) = \sqrt{-1}(-\tau (x) + y,\tau (x) - y)$
and $\sigma_0(x,\theta,y) = \sqrt{-1}(-\tau (x) + y)$ for 
$(x,t,y)\in \hat{M}_0$,
which implies $\sigma_0^{-1}(0) = (\sigma |_{\hat{M}_0})^{-1}(0)$.
Then we obtain $\sigma_0^{-1}(0)/H_0 \to \sigma^{-1}(0)/H$ by the inclusion
$\sigma_0^{-1}(0) \subset \sigma^{-1}(0)$.

Next we show the injectivity of $\sigma_0^{-1}(0)/H_0 \to \sigma^{-1}(0)/H$.
Let $(x,\theta,y)\in \sigma_0^{-1}(0)$ and $(g_0,g_1)\in H = U(k) \times U(k)$
satisfy $(g_0,g_1)\cdot (x,\theta,y)\in \sigma_0^{-1}(0)$.
Since $(g_0,g_1)\cdot (x,\theta,y) = ((g_0,g_1)\cdot x, g_0\theta g_1^{-1}, 
{\rm Ad}_{g_1}y)$, we have $\tilde{\theta} := g_0\theta g_1^{-1}\in T^k$.
For $x=(A, B, 0, 0)$,
\begin{eqnarray}
(g_0,g_1)x &=& (g_0 A g_1^{-1}, g_1 B g_0^{-1}, 0, 0)\nonumber\\
&=& (g_0 A \theta^{-1} g_0^{-1} \tilde{\theta}, 
\tilde{\theta}^{-1}g_0 \theta B g_0^{-1}, 0,0) =:(\tilde{A},\tilde{B},0,0)
 \in M_0,\nonumber
\end{eqnarray}
then we have equalities between diagonal matrices
$g_0 A \theta^{-1} g_0^{-1} = \tilde{A}\tilde{\theta}^{-1}$ and
$g_0 \theta B g_0^{-1} = \tilde{\theta} \tilde{B}$.
By comparing the eigenvalues of both sides, we can see there exist 
$\chi\in \mathcal{S}_k $ such that
$\tilde{a}_{\chi(i)}\tilde{\theta}_{\chi(i)}^{-1} = a_i\theta_i^{-1}$ and
$\tilde{b}_{\chi(i)}\tilde{\theta}_{\chi(i)} = b_i\theta_i$ for $i=1,\cdots,k$,
where $\tilde{a}_i,\tilde{b}_i,\theta_i,\tilde{\theta}_i$ are the 
$i\times i$ components of $\tilde{A}, \tilde{B}, \theta, \tilde{\theta}$,
respectively.
This implies that $(x,t,y)$ and $(g_0,g_1)\cdot (x,\theta,y)$ lie on the same
$H_0$-orbit, since $y = \tau (x)$.
\end{proof}

\begin{proof}[Proof of Theorem \ref{taubnuthilb}]
It is easy to see that $\sigma_0^{-1}(0)/H_0 \to \sigma^{-1}(0)/H$
preserves the hyper-K\"ahler structures.
Since $\sigma_0^{-1}(0)/H_0 = (\sigma_0^{-1}(0)/T^k)/\mathcal{S}_k$ and
$\sigma_0^{-1}(0)/T^k$ is isomorphic to $(\mathbb{C}^2_{Taub-NUT})^k$,
therefore $\sigma^{-1}(0)/H$ contains 
$(\mathbb{C}^2_{Taub-NUT})^k/\mathcal{S}_k$ as a hyper-K\"ahler suborbifold.
From Theorem \ref{1.1}, the quotient space $\sigma^{-1}(0)/H$ is 
homeomorphic to $\mu^{-1}(0)/H_{\rho}$, 
which is $(\mathbb{C}^2)^k/\mathcal{S}_k$ by \cite{Na1}.
Since $(\mathbb{C}^2)^k/\mathcal{S}_k$ is connected, and 
$(\mathbb{C}^2_{Taub-NUT})^k/\mathcal{S}_k$ is complete,
the embedding $\sigma_0^{-1}(0)/H_0 \to \sigma^{-1}(0)/H$ should be isomorphic.
\end{proof}

\subsection{Quiver varieties}
The setting considered in Section \ref{hilb} can be generalized to 
quiver varieties defined by Nakajima \cite{Na2}, which contains 
ALE spaces constructed by \cite{K1}.
Quiver varieties are constructed as \hK quotient as follows.

Let $Q = (V,E,s,t)$ be a finite oriented graph, that is, 
$V$ and $E$ are finite 
sets with maps $s,t:E \to V$, where $s(h)\in V$ is a source of 
a quiver $h\in E$, and $t(h)\in V$ is a target.
More over $E$ is decomposed into $E = \Omega \sqcup \overline{\Omega}$, 
with one to one correspondence $\Omega \to \overline{\Omega}$ denoted by
$h\mapsto \bar{h}$ satisfying $s(\bar{h}) = t(h)$ and $t(\bar{h}) = s(h)$.
Next we fix a dimension vector $v= (v_k)_{k\in V}$, where each $v_k$ is a 
positive integer.
Then the action of $\prod_{k\in V} U(v_k)$ on 
\begin{eqnarray}
M = \bigoplus_{h\in \Omega} {\rm Hom} (\mathbb{C}^{v_{s(h)}},
\mathbb{C}^{v_{t(h)}})
\oplus \bigoplus_{h\in \Omega} {\rm Hom} (\mathbb{C}^{v_{t(h)}}, 
\mathbb{C}^{v_{s(h)}})\label{quiverrep}
\end{eqnarray}
by $(g_k)_k\cdot (A_h,B_h)_h := (g_{t(h)}A_h g_{s(h)}^{-1},
g_{s(h)} B_h g_{t(h)}^{-1})_h$.
Then the quiver varieties are constructed by taking \hK quotients 
for this situation.

Here we explain the settings of Taub-NUT deformations for quiver varieties,
which contain the case of \cite{D}.
Let $M$ be as (\ref{quiverrep}).
We define $H,G,\rho$ as follows so that $\Hcapg = \prod_{k\in V} U(v_k)$.
We take another finite oriented graph 
$\tilde{Q} = (\tilde{V}, E, \tilde{s},\tilde{t} )$ with a surjection 
$\pi : \tilde{V} \to V$ satisfying $\pi(\tilde{s}(h)) = s(h)$ and 
$\pi(\tilde{t}(h)) = t(h)$ for all $h\in H$.
We label elements of $\pi^{-1}(k)$ numbers as
$\pi^{-1}(k) = \{ k_1,k_2,\cdots, k_{N_k} \}$.
Note that $\tilde{Q}$ may be disconnected even if $Q$ is a connected graph. 
A dimension vector $v' = (v_{\tilde{k}})_{\tilde{k}\in \tilde{V}}$
is determined by $v_{\tilde{k}} = v_{\pi{\tilde{k}}}$ 
for all $\tilde{k}\in \tilde{V}$.
Then we define $H := \prod_{\tilde{k}\in \tilde{V}} U(v_{\tilde{k}})$ and 
$G := \prod_{k\in V'} U(v_k)^{N_k - 1}$, where $V' = \{k\in V;
\ \sharp \pi^{-1}(k) \ge 2\}$. 
A homomorphism $\rho : H \to G\times G$ is defined by
\begin{eqnarray}
\rho ((g_{\tilde{k}})_{\tilde{k}\in \tilde{V}}) 
= ((g_{k_1},g_{k_2},\cdots, g_{k_{N_k - 1}}),
(g_{k_2},\cdots, g_{k_{N_k - 1}}, g_{k_{N_k}}))_{k\in V'}\nonumber
\end{eqnarray}
Then $\mu^{-1}(\iota^*\zeta)/H_{\rho}$ becomes a quiver variety, and 
we obtain another \hK quotient $\sigma^{-1}(\zeta)/H$ diffeomorphic to 
$\mu^{-1}(\iota^*\zeta)/H_{\rho}$.

\subsection{Toric hyper-K\"ahler varieties}
In the previous sections we assumed that $H$ and $G$ are compact.
However, the compactness is not essential for the proof of Theorem \ref{1.1},
we need only ${\rm Ad}_G$-invariant positive definite inner products on 
its Lie algebra and the existence of hyper-K\"ahler metrics on $N_G$ with 
tri-Hamiltonian $G\times G$-actions.
In this subsection we consider the case of noncompact abelian Lie groups
$H = \mathbb{R}^N$ and $G = \mathbb{R}^N/\mathbf{k}$, where 
the vector subspace $\mathbf{k} \subset \mathbb{R}^N$ is given by
$\mathbf{k} = \mathbf{k}_{\mathbb{Z}}\otimes \mathbb{R}$ for some 
submodule $\mathbf{k}_{\mathbb{Z}} \in \mathbb{Z}^N$.
$\rho : H \to G\times G$ is defined by $\rho(v):= 
(v\ {\rm mod}.\ \mathbf{k}, 0)$, then $\bar{\rho}$ defined by (\ref{rho1})
is surjective.
In this case we put $N_G:= G\times G\times G\times G$ with the Euclidean metric,
and $G\times G$-action on $N_G$ is defined by
$(g_0,g_1)\cdot (h_0,h_1,h_2,h_3):= (h_0 + g_0 - g_1,h_1,h_2,h_3)$.
Then Theorem \ref{Dancer}, \ref{K2Do} and Proposition \ref{kahlerpotential}
hold in this case.
Let $M = \mathbb{H}^N$, and define $H$-action on $M$ by
\begin{eqnarray}
(t_1,\cdots, t_N)\cdot (x_1,\cdots,x_N) 
:= (x_1e^{-2\pi it_1}, \cdots, x_Ne^{-2\pi it_N}).\nonumber
\end{eqnarray}
The the hyper-K\"ahler quotient $\mu^{-1}(\iota^*\zeta)/H_{\rho}$ becomes
a toric hyper-K\"ahler variety, and $\sigma^{-1}(\zeta)/H$ is its Taub-NUT
deformation defined in \cite{B}.
Theorem \ref{1.1} can be also applied to this situation.

\subsection{Hyper-K\"ahler manifolds with tri-Hamiltonian actions}
Here we show that the limited case of Theorem 7 of \cite{DS2} also 
follows from Theorem \ref{1.1}.
Let $M = \mathbb{H}^N$ and $H\subset Sp(N)$. 
Take a normal closed subgroup $\Hcapg \subset H$ and put $G := H/\Hcapg$.
Let $\rho : H \to G\times G$ be given by $\rho(h):= (1,h\Hcapg)$.
Then $X = \mu^{-1}(\iota^*\zeta)/H_{\rho}$ is a \hK manifolds with 
tri-Hamiltonian 
$G$-action, and $\sigma^{-1}(\zeta)/H$ is the {\it modification} of 
$\mu^{-1}(\iota^*\zeta)/H_{\rho}$ defined in Section 5 of \cite{DS2}.
From Theorem \ref{1.1}, we have the following results.
\begin{thm}
Let $X = \mu^{-1}(\iota^*\zeta)/H_{\rho}$ be a tri-Hamiltonian 
$G$ \hK manifold defined as above. 
Then the {\it modification} of $X$ in the sense of Section 5 of \cite{DS2} 
by the tri-Hamiltonian $G$-action is isomorphic to 
$X$ as holomorphic symplectic manifolds, 
hence diffeomorphic.
\label{modi}
\end{thm}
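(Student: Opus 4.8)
The plan is to deduce Theorem \ref{modi} as a direct application of Theorem \ref{1.1}, so the work reduces to two verifications: that the present $\rho$ satisfies the hypotheses of Theorem \ref{1.1}, and that the resulting quotient $\sigma^{-1}(\zeta)/H$ is literally the modification of \cite{DS2}. For the first, I would check that $\bar\rho$ is surjective. Since $G = H/\Hcapg$ we have $G^{\mathbb{C}} = H^{\mathbb{C}}/\Hcapgc$, and differentiating $\rho(h) = (1, h\Hcapg)$ gives $\rho_*(\xi) = (0,\bar\xi)$ for $\xi \in \mathbf{h}$, where $\bar\xi$ denotes the image of $\xi$ in $\mathbf{g} = \mathbf{h}/\hcapg$. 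Under the natural isomorphism $(G^{\mathbb{C}} \times G^{\mathbb{C}})/\Delta_{G^{\mathbb{C}}} \cong G^{\mathbb{C}}$ sending $(g_0,g_1)\Delta_{G^{\mathbb{C}}} \mapsto g_0 g_1^{-1}$, the map $\bar\rho$ becomes $h\Hcapgc \mapsto (h\Hcapgc)^{-1}$, which is a bijection $H^{\mathbb{C}}/\Hcapgc \to G^{\mathbb{C}}$. Hence $\bar\rho$ is surjective, and together with the standing hypotheses $M = \mathbb{H}^N$ and $H \subset Sp(N)$ all assumptions of Theorem \ref{1.1} are met.

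The heart of the argument is the second verification. I would compute the \hK reduction of $M \times N_G$ by $H$ in two stages along the normal subgroup $\Hcapg \triangleleft H$ with quotient $G$. The crucial observation is that $\rho(\Hcapg) = \{(1,1)\}$, so $\Hcapg$ acts trivially on the $N_G$-factor; consequently the $\Hcapg$-moment map is just $\mu = \iota^*\hat\mu$ on the $M$-factor, and reduction at level $\iota^*\zeta$ yields
\begin{eqnarray}
\mu^{-1}(\iota^*\zeta) \times N_G \big/ \Hcapg
= \big(\mu^{-1}(\iota^*\zeta)/\Hcapg\big) \times N_G
= X \times N_G.\nonumber
\end{eqnarray}
The residual $G = H/\Hcapg$ action is tri-Hamiltonian on $X$ and acts on $N_G = T^*G^{\mathbb{C}}$ through the second $G$-factor of the $G\times G$-symmetry, since $\rho(h) = (1,h\Hcapg)$ involves only that factor. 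Therefore the second-stage quotient $(X \times N_G)/\!/\!/G$ is precisely the \hK modification of $X$ in the sense of Section 5 of \cite{DS2}, with the leftover first $G$-factor supplying the residual tri-Hamiltonian action on the modification.

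Granting these two identifications, Theorem \ref{1.1} furnishes a biholomorphism $\psi : X = \mu^{-1}(\iota^*\zeta)/\Hcapg \to \sigma^{-1}(\zeta)/H$ satisfying $\psi^*(\omega^{\zeta}_{2}+\sqrt{-1}\omega^{\zeta}_{3}) = \omega^{\iota^*\zeta}_{2}+\sqrt{-1}\omega^{\iota^*\zeta}_{3}$, which is exactly the assertion that the modification of $X$ is isomorphic to $X$ as holomorphic symplectic manifolds, and in particular diffeomorphic; note that smoothness of $X$ forces $\Hcapg$ to act freely, so the second half of Theorem \ref{1.1} applies. I expect the main obstacle to lie in the second step: matching the staged quotient with the literal definition of the modification in \cite{DS2}. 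This is essentially bookkeeping—invoking commutativity of \hK reduction in stages and tracking which $G$-factor of $N_G$ is used and at which component of the moment-map level $\zeta$—but it requires unwinding the conventions of \cite{DS2} to confirm that the space $N_G$, the $G$-action on it, and the reduction level all agree on the nose.
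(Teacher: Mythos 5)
Your proposal is correct and follows essentially the same route as the paper, which deduces Theorem \ref{modi} directly from Theorem \ref{1.1} after observing that with $\rho(h)=(1,h\Hcapg)$ the map $\bar\rho$ is surjective and that $\sigma^{-1}(\zeta)/H$ is the Dancer--Swann modification of $X$ (the paper merely asserts this identification in the preamble, whereas you justify it by reduction in stages along $\Hcapg\triangleleft H$, using that $\Hcapg$ acts trivially on $N_G$). The only quibble is your aside that smoothness of $X$ ``forces'' $\Hcapg$ to act freely; freeness is really the intended standing hypothesis rather than a consequence, but this does not affect the argument.
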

By Theorem 7 of \cite{DS2}, we have already known that 
$\sigma^{-1}(\zeta)/H$ is diffeomorphic to 
$\hat{\mu}^{-1}(\nu^1(N_G) + \zeta)/\Hcapg$,
which is an open subset of $X$.
Theorem \ref{modi} asserts that this open subset is 
diffeomorphic to $X$, even if it is a proper subset of $X$.
\\

{\bf Acknowledgment.} The author was supported by Grant-in-Aid for JSPS Fellows ($23\cdot 1432$)

\bibliographystyle{plain}

\end{document}